\newtheorem{theorem}{\bf Theorem}[section]
\newtheorem{proposition}[theorem]{\bf Proposition}
\newtheorem{lemma}[theorem]{\bf Lemma}
\newtheorem{corollary}[theorem]{\bf Corollary}
\theoremstyle{definition}
\newtheorem{definition}[theorem]{\bf Definition}
\newtheorem*{definition*}{\bf Definition}
\newtheorem{remark}[theorem]{\bf Remark}
\newtheorem{example}[theorem]{\bf Example}
\DeclareTextFontCommand{\emph}{\bfseries\em}
\newcommand		{\p}[1]	{\left(#1\right)}
\newcommand		{\ppp}[1]{\left\{#1\right\}}
\newcommand		{\abs}[1]{\left|#1\right|}
\newcommand 	{\norm}[1]{\left\lVert#1\right\rVert}
\newcommand 	{\enorm}{\norm{\ \ }}
\newcommand 	{\sset}[1]{\left\{#1\right\}}
\newcommand		{\R} 	{\mathbb R}
\newcommand		{\Z} 	{\mathbb Z}
\newcommand		{\N} 	{\mathbb N}
\newcommand		{\I} 	{I}
\newcommand 	{\flow}	{\Phi}
\newcommand		{\D}	{\mathcal D}
\newcommand 	{\Dih} {\mathrm D}
\newcommand     {\Aut}{\mathrm{Aut}}
\newcommand 	{\Iso} {\mathrm {Iso}}
\renewcommand 	{\mp} {\mathfrak {Mp}}
\newcommand 	{\fix} {\mathrm {Fix}}
\newcommand 	{\cluster} {C}
\renewcommand 	{\c}	{\bullet}
\renewcommand 	{\d}	{\mathrm d}
\renewcommand 	{\mod} {\mathrm {mod} \,}
\newcommand 	{\1} {\mathbbm{1}}
\newcommand 	{\ER}	{Erd\H{o}s-Renyi\ }
\newcommand{\at}{\makeatletter @\makeatother}
\newcommand		{\T} 	{\mathbb T}
\renewcommand 	{\S} 	{\mathbb S}
\newcommand 	{\davide}[1]{{#1}}
\begin{document}

\title[Dynamical Systems on Graph Limits and their Symmetries]
{Dynamical Systems on Graph Limits \\and their Symmetries}
\author[Christian Bick]{Christian Bick$^{1,2,*}$, Davide Sclosa$^{1,\star}$}
\address{$^{1}$Department of Mathematics, Vrije Universiteit Amsterdam, De Boelelaan 1111, 1081 HV Amsterdam, the Netherlands\\
$^2$ Institute for Advanced Study, Technical University of Munich, Lichtenbergstra\ss{}e 2, 85748 Garching, Germany\\
e-mail: $^*$c.bick\at vu.nl; $^{\star}$d.sclosa\at vu.nl}

\begin{abstract}
The collective dynamics of interacting dynamical units on a network crucially depends on the properties of the network structure. 
Rather than considering large but finite graphs to capture the network, one often resorts to graph limits and the dynamics thereon.
We elucidate the symmetry properties of dynamical systems on graph limits---including graphons and graphops---and analyze how the symmetry shape\davide{s} the dynamics, for example through invariant subspaces.
In addition to traditional symmetries, dynamics on graph limits
can support generalized noninvertible symmetries.
Moreover, as asymmetric networks can have symmetric limits, we note that one can expect to see ghosts of symmetries in the dynamics of large but finite asymmetric networks.
\end{abstract}

\maketitle


\section{Introduction} \label{sec:introduction}

Synchronization and other collective phenomena of networks of interconnected dynamical systems are crucial in many systems in science and technology, ranging from interacting neural units to power grid networks~\cite{Pikovsky2003,Strogatz2004}.
The network structure is often captured by a graph:
Each vertex is a dynamical system and two systems are coupled if they are
connected by an edge.
Thus, an essential question in network dynamical systems is how the network structure shapes the collective network dynamics.
Classical dynamical systems tools are of limited use to answer this question as many relevant network dynamical systems have many nodes---e.g., the human brain is a network of billions of individual cells. 
Thus, a common approach is to consider continuum limits, such as graphons~\cite{lovasz2012large} or graphops~\cite{backhausz2022action}, and dynamics on these limit objects
(see, e.g.,~\cite{Medvedev2013a, kaliuzhnyi2018mean,
Chiba2016, medvedev2014small, Chiba2019,Kuehn2020b,gkogkas2022graphop}).

In this paper we analyze the symmetry properties of graph limits and the implications for the network dynamics.
Symmetries essentially shape the dynamics, for example, by inducing invariant subspaces~\cite{Golubitsky2002} that may correspond to synchrony patterns~\cite{Pecora2014}, or facilitate the emergence of structurally stable heteroclinic cycles~\cite{Ashwin2017}.
{First, we make the notion of symmetries rigorous for dynamics on graph limits; 
this leads to some technical challenges compared to the finite-dimensional setting as the dynamics have to be defined on appropriate spaces where the state of
individual vertices might become irrelevant.}
Second, we compute the symmetry groups of several graphons and graphops appearing in \davide{the} literature, including homogeneous networks, multiple populations, and networks on manifolds.
Third, we describe the corresponding effects on dynamics.
Fourth, we give some consequences for large but finite-dimensional systems: As the limit object can have significantly more symmetries than any finite element of the converging sequence one can expect ``ghosts'' of symmetries of the limit object in large but finite network dynamical systems.

\subsection*{Networks, graphs, and graph limits}
A graph provides a natural mathematical abstraction of a network:
Two vertices interact if there is an edge between them.
Since many networks of interest have many vertices---e.g., the human brain mentioned above or the internet as a network of webpages connected by links---it is often convenient to consider the limit of large graphs.
A graph sequence is called dense if the number of edges grows quadratically with respect to the number of vertices.
The groundbreaking work by Szegedy and Lov\'asz~\cite{lovasz2012large} introduces graphons as limits of dense graph sequences.
Graphops have been later introduced to include both dense and sparse graph sequences~\cite{backhausz2022action}.

Graph automorphisms describe the symmetries of a graph: 
These are permutations of vertices that preserve the edges.
Here we typically assume graphs to be finite, undirected, and simple and make it explicit when additional structure (e.g., weights) is present.
In this context, the automorphism group of a cycle graph is a dihedral group and of a complete graph is the whole permutation group.
While the notion of an automorphism can be extended to graphons as graph limits, there are some nuances that need to be taken into account~\cite{lovasz2015automorphism}. 
By contrast, automorphisms---and thus the symmetries---of the more general class of graphops have rarely been considered.

\subsection*{Dynamical systems and symmetries}
Symmetries of a dynamical system are transformations of phase space that send trajectories to trajectories; see~\cite{Golubitsky2002,Golubitsky1988} for an introduction to the subject.
To fix some notation, let the flows~$(X, \flow^X)$ and~$(Y, \flow^Y)$ define dynamical systems. 
A map~$\gamma: X\to Y$ \emph{maps trajectories to trajectories} if for every~$x\in X$ and every~$t\in \R$ we have~$\gamma(\flow^X_t (x)) = \flow^Y_t(\gamma(x))$. A \emph{symmetry}~$\gamma$ of a dynamical system~$(X, \flow)$ is a bijection~$\gamma:X\to X$ sending trajectories to trajectories: 
For every~$t\in \R$ and every~$x\in X$ we have~$\gamma(\Phi_t(x)) = \Phi_t (\gamma(x))$.
Note that being a symmetry is the same as commuting with the flow.
Notice that if~$\gamma$ is a symmetry then its inverse~$\gamma^{-1}$ is also a symmetry since
$\gamma^{-1} \circ \flow_t = (\flow_{-t} \circ \gamma)^{-1} = \flow_t \circ \gamma^{-1}$
and so the symmetries form a group. 
Typically one is interested in group of symmetries of a specific type, for example, due to the physical nature of the system.
Symmetries give rise to dynamically invariant subspaces.
It is a general fact that if two maps~$\gamma, \nu: X\to X$ commute with each other, then the fixed point set~$\fix(\gamma)$ is $\nu$-invariant and the fixed point set~$\fix(\nu)$ is $\gamma$-invariant.
If~$\gamma$ is a symmetry, this implies that the fixed point set~$\fix (\gamma)$ is dynamically invariant.

In the case of coupled dynamical systems on a graph, symmetries may be induced by the underlying combinatorial graph structure independently of the specific choice of coupling functions.
Importantly, symmetry-induced invariant subspaces may correspond to synchrony patterns, where individual units have the same state. 
Thus, computing symmetry groups allows
to understand the emergence of ``cluster'' dynamics~\cite{Pecora2014}. 
Moreover, there has been further interest in symmetries of network dynamical systems due to symmetry breaking phenomena~\cite{Bick2015d,Kemeth2018}.
However, symmetry considerations for network dynamical systems typically focus on finite-dimensional systems.
Elucidating the symmetries of network dynamical systems on graph limits as well as the implications for the dynamics remain underexplored.

\subsection*{Main contributions}


In this paper, we analyze the symmetry properties of graph limits, their implications for dynamics on such graph limits, and corresponding large- but finite-dimensional network dynamical systems.

From the perspective of network structure alone, we focus on the automorphism group of graphons that are common in literature. 
These include constant graphons (Section~\ref{sec:constant_graphon}), canonical embeddings of graphs on the unit interval (Proposition~\ref{prop:block_structure_group}), and a class of graphons determined by distant-dependent coupling on manifolds (Theorem~\ref{thm:iso_is_auto}). 
For the latter, we highlight that the manifold itself is the natural choice to index the vertices rather than the traditional choice of indexing vertices by the unit interval.
Specific examples we analyze are the spherical graphon (Section~\ref{sec:spherical_graphon}) and graphons on the torus (Section~\ref{sec:toral_graphon}).
We also introduce a notion of graphop automorphisms and analyze the automorphism group of the spherical graphop (Proposition~\ref{prop:spherical_graphop}).

Automorphisms of the network structure induce symmetries for dynamics on the network.
We generalize this observation from graphs to a general setting for dynamical systems on graphons (Corollary~\ref{cor:imp_gives_isometry}) and graphops (Lemma~\ref{lem:graphop_symmetry}).
{
As the dynamics on graph limits are typically infinite-dimensional, we consider the dynamics on graph limits as dynamics on~$L^1(J)$, where~$J$ is an appropriate index space.
First, the general setup induces technical challenges: state of any given vertex (or pair of vertices) that define synchrony for dynamics on graphs becomes meaningless for typical dynamics on graph limits.
}%
Second, compared to their finite-dimensional counterparts, dynamical systems on graph limits may have generalized symmetries that can be noninvertible.
This is due to the existence of non-invertible measure preserving transformations, like the doubling map~$x \mapsto 2x\ (\mod 1)$ on the unit interval.
{
Note that graphons that are equivalent as graph limits yield distinct dynamical systems.
We show that while these may have wildly different symmetry groups they can be related to one another (Section~\ref{sec:graph_limit_theory}).
}

We subsequently apply our results to the study of graphon dynamical systems with relevant topologies, with a focus on dynamical phenomena that are due to symmetry.
We show that dynamically invariant subspaces arise in two different ways, as a set of fixed points of a symmetry or as the image of the Koopman operator (Section~\ref{sec:InvSubsp}).
Cluster dynamics and multi-population structures are well-studied in finite networks and generalize to graphons, see Theorem~\ref{thm:twins_constant} and Section~\ref{sec:block_structure}.
Moreover, we analyze systems with spherical symmetry~(Figure~\ref{fig:sphere}), and multi-dimensional twisted states for coupled oscillators on a torus~(Section~\ref{sec:toral_graphon}). 
We also consider mean-field dynamics on graphons where the state of each node is represented by a probability measure: 
Using symmetries we show that dynamical systems on graphs~\cite{Rodrigues2016}, mean-field dynamics~\cite{Strogatz1991,Lancellotti2005}, multi-population mean fields~\cite{bick2022multi}, and dynamics on graphons~\cite{Medvedev2013a} are all dynamically invariant subspaces of the system analyzed in~\cite{kaliuzhnyi2018mean}.

Finally, we return to large- but finite-dimensional dynamical systems and their relationship to dynamics on the graph limit. 
Note that convergent sequences of asymmetric graphs can have symmetric limit: 
For example, \ER random graphs are asymmetric with asymptotic probability~$1$ while they converge to the constant graphon, which has a large symmetry group.
Thus, it is natural to expect that the dynamics on large (but finite) graphs will inherit dynamical features from the symmetric limit.
We make this observation rigorous in Section~\ref{sec:approximated_symmetries} which explains our numerical findings in Figure~\ref{fig:sphere}.

\subsection*{Structure of the paper}

The paper is organized as follows. In Section~\ref{sec:graph_dynamical_system} we set the stage by considering a class of dynamical systems on graphs and their symmetries.
In the following Section~\ref{sec:graphon_dynamical_system}, we generalize this class of dynamical systems to graphons and show that graphon automorphisms induce symmetries on the dynamics on the graphon. In Section~\ref{sec:homogeneous_twins} we exploit these results to analyze symmetries and invariant subspaces of dynamical systems with relevant network topologies, such as Kuramoto-type networks that consist of one (or more) coupled populations. In Section~\ref{sec:geodesic_coupling} we consider networks that arise when the dynamical units are placed on a manifold and coupling depends on the geodesic distance between two units on the manifold; with the geometric structure,
choosing index spaces different from the unit interval becomes relevant. In Section~\ref{sec:approximated_symmetries}, we compare the dynamics of finite systems with those of the infinite-dimensional limit. In Section~\ref{sec:mean-field-graphon} we consider a first generalization to mean-field systems where the state of each units is a probability measure. Symmetry arguments allow to see relationship to other continuum limits.
Finally, in Section~\ref{sec:graphop} we consider symmetries of graphops and dynamical systems on graphops as a second generalization.

\subsection*{Acknowledgments}

The authors would like to thank C.~Kuehn and Y.~Zhao for helpful discussions, the anonymous referees for helpful comments, and the hospitality of the Institute of Advanced Study at the Technische Universit\"at M\"unchen (TUM--IAS), where part of the work was carried out.
CB acknowledges support from the Engineering and Physical Sciences Research Council (EPSRC) through the grant EP/T013613/1 and a Hans Fischer Fellowship from TUM--IAS.


\section{Graph dynamical systems and their symmetries} \label{sec:graph_dynamical_system}
\label{sec:graph_dynamics}

We first consider a class of network dynamical systems whose underlying network structure is determined by a graph. 
We typically consider finite, undirected, simple graphs and will simply refer to them as \emph{graphs}---we will highlight if a graph comes with extra structure (e.g., weights). 
A \emph{graph isomorphism} is a bijective map between the vertices preserving
edges and non-edges.
Intuitively, isomorphic graphs are the same up to relabeling.
This section will set the stage for a generalization we will subsequently discuss: 
The main takeaway is that automorphisms of the underlying graph yield a group of symmetries of the dynamical system.

\subsection{Graph dynamical systems and transformations}

To be concrete, let~$G$ be a graph with vertex set~$\{1,\ldots,n\}$ and let~$(A_{jk})_{j,k}$ denote its adjacency matrix.
Now suppose that the state of vertex~$j$ is~$u_j \in \R$.
For Lipschitz continuous functions~$f,g:\R^2\to\R$ we consider the \emph{graph dynamical system} where the state of node~$j$ evolves according to
\begin{align}\label{eq:main:graph}
	\dot u_j &= f\p{u_j, \frac{1}{n} \sum_{k=1}^n A_{jk} g(u_j, u_k)}, \qquad j=1,\ldots,n.
\end{align}
Inspired by the neural networks literature we call~$f$ the \emph{activation function}
and~$g$ the \emph{coupling function}. 
These function are understood to be fixed while we will change the graph~$G$.

Note that the vector field is Lipschitz continuous in~$u$ and therefore~\eqref{eq:main:graph}
defines a dynamical system on~$\R^n$.
We denote by~$\mathcal D(G)$ the dynamical system and by~$\flow^G_t(u)$ the corresponding \emph{flow}.
\davide{In this paper a dynamical system is the datum of a phase space together a the flow.}

Graph dynamical systems~\eqref{eq:main:graph} include as special cases the linear diffusion
equation, the replicator equation (with equal types), and the Kuramoto model of identical phase oscillators on a graph
\begin{equation} \label{eq:graph:kuramoto}
	\dot u_j = \frac{1}{n} \sum_{k=1}^n A_{jk} \sin(u_k - u_j + \alpha), \qquad j=1,\ldots,n.
\end{equation}
We will come back to~\eqref{eq:graph:kuramoto} to illustrate the results by numerical simulations.

Isomorphic graphs support the same graph dynamical systems:

\begin{theorem} \label{thm:isomorphism:graph}
Let~$\varphi: G\to H$ an isomorphism of graphs.
Then the map
\[
	\varphi^*(v_1, \ldots, v_n) = (v_{\varphi(1)},\ldots,v_{\varphi(n)})
\]
is an isometry of~$\R^n$ mapping trajectories of the graph dynamical system~$\mathcal D(H)$ to trajectories of the graph dynamical system~$\mathcal D(G)$. In particular, the following diagram is commutative:
\begin{center}
\begin{tikzcd}
G \arrow[r, "\varphi"] \arrow[d] & H \arrow[d] \\
\mathcal D (G) & \mathcal D (H) \arrow[l, "\varphi^*"]
\end{tikzcd}.
\end{center}
\end{theorem}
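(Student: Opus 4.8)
The plan is to verify directly that $\varphi^*$ conjugates the two vector fields, from which the statement about trajectories follows by uniqueness of solutions. First I would observe that $\varphi^*$ is visibly a linear bijection of $\R^n$ that permutes coordinates, hence an isometry for the Euclidean norm (indeed for any $\ell^p$ norm). The content is therefore entirely in the claim that $\varphi^*$ intertwines the flows, i.e. $\varphi^*(\flow^H_t(v)) = \flow^G_t(\varphi^*(v))$.

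The key computation is to compare the vector field $F^H$ defining $\mathcal D(H)$ with the vector field $F^G$ defining $\mathcal D(G)$, pulled back along $\varphi^*$. Writing $u = \varphi^*(v)$, so that $u_j = v_{\varphi(j)}$, I would compute the $j$-th component of $\frac{\d}{\d t} u_j$ assuming $v$ solves $\mathcal D(H)$: it equals $f\bigl(v_{\varphi(j)}, \frac{1}{n}\sum_{\ell} A^H_{\varphi(j)\ell}\, g(v_{\varphi(j)}, v_\ell)\bigr)$. Now I reindex the inner sum by $\ell = \varphi(k)$, using that $\varphi$ is a bijection of the vertex set, to get $\frac{1}{n}\sum_k A^H_{\varphi(j)\varphi(k)}\, g(v_{\varphi(j)}, v_{\varphi(k)})$. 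Since $\varphi$ is a graph isomorphism, $A^H_{\varphi(j)\varphi(k)} = A^G_{jk}$, and since $u_j = v_{\varphi(j)}$, $u_k = v_{\varphi(k)}$, this is exactly $f\bigl(u_j, \frac{1}{n}\sum_k A^G_{jk}\, g(u_j,u_k)\bigr)$, the $j$-th component of the vector field of $\mathcal D(G)$ at $u$. Hence $\varphi^*$ maps solutions of $\mathcal D(H)$ to solutions of $\mathcal D(G)$; applying the same argument to $\varphi^{-1}$ (also a graph isomorphism) gives the converse, so trajectories are mapped onto trajectories. The commutativity of the diagram is then just a restatement of this, with the unlabeled vertical arrows denoting the assignment $G \mapsto \mathcal D(G)$.

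The main obstacle is essentially bookkeeping: one must be careful about the direction of $\varphi^*$ relative to $\varphi$ (the pullback on states goes from $\mathcal D(H)$ to $\mathcal D(G)$ even though $\varphi$ goes from $G$ to $H$), and about the legitimacy of the reindexing $\ell \mapsto \varphi(k)$ in the coupling sum, which is exactly where surjectivity of $\varphi$ on vertices is used. No genuine analytic difficulty arises because $f$ and $g$ are Lipschitz, so existence and uniqueness of flows is guaranteed and intertwining of vector fields passes to intertwining of flows.
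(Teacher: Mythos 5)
Your proof is correct and follows essentially the same route as the paper's: show that $\varphi^*$ is a coordinate permutation (hence isometry), then substitute a solution of $\mathcal D(H)$, reindex the coupling sum via the bijection $\varphi$, and invoke $A^H_{\varphi(j)\varphi(k)} = A^G_{jk}$ to recognize the vector field of $\mathcal D(G)$. The only cosmetic difference is that you make the reindexing and the appeal to surjectivity fully explicit and add the $\varphi^{-1}$ remark for the converse, whereas the paper phrases the same computation more tersely as a chain of equalities for $\dot v_{\varphi(j)}$.
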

\begin{proof}
Clearly~$\varphi^*$ is an isometry. It remains to prove that~$\varphi^*$ maps solutions to
solutions. Let~$v$ a solution in~$\D(H)$. Every vertex in~$H$ is the $\varphi$-image
of a vertex of~$G$ and
\begin{align*}
	\dot v_{\varphi(j)} &= f \p{v_{\varphi(j)}, \sum_{k\in v(H)}
			g\p{H_{\varphi(j) k}, v_{\varphi(j)}, v_k}} \\
		&= f \p{v_{\varphi(j)}, \sum_{k \in v(G)}
			g\p{H_{\varphi(j) \varphi(k)}, v_{\varphi(j)}, v_{\varphi(k)}}} \\
		&= f \p{v_{\varphi(j)}, \sum_{k \in v(G)}
			g\p{G_{jk}, v_{\varphi(j)}, v_{\varphi(k)}}}.
\end{align*}
We conclude that~$\varphi^* v$ is a solution in~$\D(G)$.
\end{proof}

\begin{corollary}
If the graphs~$G$ and~$H$ are isomorphic, then the graph dynamical systems~$\mathcal D(G)$ and~$\mathcal D(H)$ are topologically conjugated.
\end{corollary}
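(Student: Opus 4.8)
The plan is to deduce this immediately from Theorem~\ref{thm:isomorphism:graph}, so the argument is essentially bookkeeping. First I would fix an isomorphism $\varphi: G\to H$ and pass to the associated coordinate permutation $\varphi^*: \R^n\to\R^n$. Theorem~\ref{thm:isomorphism:graph} already supplies two of the three ingredients of a topological conjugacy: it states that $\varphi^*$ is an isometry of~$\R^n$ and that it maps trajectories of~$\mathcal D(H)$ to trajectories of~$\mathcal D(G)$. Unwinding the definition of ``maps trajectories to trajectories'' from the introduction, the latter is exactly the intertwining identity $\varphi^*\circ\flow^H_t = \flow^G_t\circ\varphi^*$ for every $t\in\R$. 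Hence the only remaining point is that $\varphi^*$ is a homeomorphism.

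That point I would settle by noting that $\varphi^*$ is a linear bijection of $\R^n$ — it merely permutes coordinates — and that its inverse is $(\varphi^{-1})^*$, where $\varphi^{-1}:H\to G$ is again a graph isomorphism; both $\varphi^*$ and $(\varphi^{-1})^*$ are continuous, so $\varphi^*$ is a homeomorphism. (Alternatively, any isometry of $\R^n$ is automatically a homeomorphism, which is all one needs.) Combining this with the intertwining relation gives that $\varphi^*$ is a topological conjugacy from $\mathcal D(H)$ to $\mathcal D(G)$, proving the corollary.

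I do not expect a genuine obstacle here: the corollary is a formal consequence of the theorem. The one place where a reader might worry is whether ``maps trajectories to trajectories'' could a priori allow a time reparametrization; if one wanted to address that explicitly, the gap would be closed by uniqueness of solutions — the vector field in~\eqref{eq:main:graph} is Lipschitz, so by Picard--Lindel\"of the trajectory of $\mathcal D(G)$ through $\varphi^*(x)$ at time $0$ is unique and must coincide with $t\mapsto\flow^G_t(\varphi^*(x))$ — but with the definitions used in the paper this is not needed.
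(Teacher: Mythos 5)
Your proof is correct and matches the paper's (implicit) reasoning: the corollary is stated without a separate proof precisely because it is the immediate formal consequence of Theorem~\ref{thm:isomorphism:graph} that you describe, with $\varphi^*$ being an isometry (hence a homeomorphism) that intertwines the flows. Your extra remark about time reparametrization is a reasonable sanity check but, as you note, unnecessary given the paper's definition of ``maps trajectories to trajectories.''
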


\begin{remark}
In the equivariant dynamical systems literature, the action is typically defined as~$\varphi^* (u)_j = u_{\varphi^{-1}(j)}$.
We prefer not to do so, as we will also consider noninvertible maps~$\varphi$ in the following section.
\end{remark}

Theorem~\ref{thm:isomorphism:graph} shows that graph isomorphisms map trajectories
to trajectories.
Note that, in general, a graph homomorphism does not map trajectories to trajectories.
For example, consider~\eqref{eq:graph:kuramoto} on
a connected graph~$G$ with more than one vertex, let~$v$ be a vertex
and~$H$ be the graph with one vertex~$v$ and no edges.
Then~$v$ has trivial dynamics in~$H$ and non-trivial dynamics in~$G$.
The embedding~$H\to G$ does not correspond to a map between dynamical systems.
In~\cite{deville2015modular} the notion of graph dynamical system is different,
but a similar analysis of these functorial aspects appear.

\subsection{Symmetries of graph dynamical systems and invariant subspaces} 
\label{sec:graph_dynamics}

An immediate consequence of Theorem~\ref{thm:isomorphism:graph} is that graph automorphisms yield symmetries of the corresponding graph dynamical systems. This motivates the following definition:

\begin{definition}
Let~$G$ be a graph with~$n$ vertices. Let~$\varphi: G\to G$ an automorphism of the graph.
The map~$\varphi^*: \R^n \to \R^n$ is an isometry of~$\R^n$ and a symmetry of the dynamical system~$\D(G)$.
We call~$\varphi^*$ a \emph{graph-induced symmetry}.
\end{definition}

Note that when we talk about `graph automorphisms' we refer to the symmetries of a graph as a combinatorial object. By contrast, the `graph-induced symmetries' are symmetries of dynamical systems induced by the properties of the underlying graph.
The automorphism group of~$G$
and the group of graph-induced symmetries of~$\D(G)$
are, by definition, isomorphic. We make a clear distinction
as they act on different spaces, the set~$\{1,\ldots,n\}$ and~$\R^n$, respectively.

We remark that the graph-induced symmetry group can be arbitrarily complicated. Indeed,
it is known that any finite group is the automorphism group
of a graph~\cite{frucht1939herstellung}.

As an example, the role of graph automorphisms for the dynamics of symmetrically coupled phase oscillators~\eqref{eq:graph:kuramoto} has been analyzed in~\cite{ashwin1992dynamics}.

Graph dynamical system may have more symmetries than the graph-induced symmetries.
For example, for every dynamical system~$(X, \flow)$
and every~$t\in \R$ the map~$\flow_t$ is a symmetry.
In the context of~\eqref{eq:main:graph}, extra symmetries can appear
for particular choices of~$f$ and~$g$.
If~$f=0$ dynamics is trivial and any bijection of~$X$ is a symmetry.
If~$g(u,v)=\sin(u-v)$ the phase shift group of
rotational symmetries appear~\cite{ashwin1992dynamics}.
Usually one is interested in symmetries within a specific group,
for example linear transformations, homeomorphisms, or isometries. 
In this paper we focus on the symmetries given by the combinatorial structure,
which are independent on the particular choice of~$f$ and~$g$.

Symmetries induce dynamically invariant subspaces. In the case of graph-induced symmetries,
the subspaces are linear and given by equalities of
coordinates---sets of this form are also called \emph{cluster} or \emph{polydiagonal subspaces}.
Let~$\varphi$ be a graph automorphism. We have seen that~$\varphi^*$ is a symmetry,
and therefore the subspace
\begin{equation} \label{eq:graph_cluster}
	\fix(\varphi^*) = \{u\in \R^n \mid u_{\varphi(1)}=u_1, \ldots, u_{\varphi(n)}=u_n\}
\end{equation}
is dynamically invariant.

If two vertices~$j,k$ have the same neighbors---such nodes are called~\emph{twins}---then the transposition interchanging~$j$ and~$k$ is a symmetry. 
Consequently, the cluster subspace~$\{u_j=u_k\}$ is dynamically invariant.


\section{Graphon dynamical systems and their symmetries} \label{sec:graphon_dynamical_system}
\label{sec:graphon_dynamics}

We now introduce graphon dynamical systems as a generalization of graph dynamical systems and extend the statements in Section~\ref{sec:graph_dynamics} to this larger class.

\begin{definition} \label{def:graphon}
Let~$J = (\Omega, \mu)$ be a probability space. 
A \emph{kernel} is a symmetric measurable function~$W: \Omega\times \Omega \to \R$.
A \emph{graphon}~$(J, W)$ is a symmetric measurable function $W: \Omega\times \Omega \to [0,1]$, that is, a kernel with range~$[0,1]$.
\end{definition}

To lighten the exposition, we will often use~$\Omega$ and~$J$ interchangeably.
On the other hand, when integrating~$\Omega$ with respect to two different probability measure,
we will make the underlying space~$\Omega$ explicit.

The class of graph dynamical systems~\eqref{eq:main:graph} considered in the previous section naturally generalize to graphons.
We consider a system of interacting units labelled by~$J$, where each unit~$x$ is associated to a state~$u_x \in \R$.

\begin{definition} \label{def:graphon_dynamical_system}
Let~$(J, W)$ be a graphon.
Let~$f,g:\R^2\to\R$ be two Lipschitz continuous functions.
We call~\emph{graphon dynamical system} the dynamical system in the ambient space~$L^1(J)$
induced by the evolution equation
\begin{equation}
\label{eq:main:graphon}
	\dot u_x = f\p{u_x, \int_{J} W(x,y) g(u_x, u_y) \d \mu (y)}, \qquad x\in J.
\end{equation}
\end{definition}
Since functions in~$L^1(J)$ are identified up to sets of measure zero, equation~\eqref{eq:main:graphon} has to be understood in the following sense:
For every~$t\in\R$ the equation holds for almost every~$x\in J$,
that is, up to a nullset (which may depend on~$t$).
Lemma~\ref{lem:existence_uniqueness} below shows that~\eqref{eq:main:graphon}
actually defines a dynamical system in~$L^1(J)$.

Kuramoto dynamics on a graph, introduced in the previous section, generalizes
to a graphon as follows:
\begin{equation} \label{eq:kuramoto}
	\dot u_x = \omega + \int_{J} W(x,y) \sin(u_y - u_x + \alpha) \ \d \mu (y).
\end{equation}
where~$\omega, \alpha \in \R$ are parameters. We come back to this example in numerical simulations.

We refer to~$J = (\Omega,\mu)$ as~\emph{index space}.
In contrast to some previous approaches to dynamical systems on graphons~\cite{Chiba2016, medvedev2014small, Chiba2019}, we consider graphons defined on a general probability space~$J$ rather than the unit interval~$J=I$ only.
There are three reasons for this.
First, some graphons have a natural underlying space~$J$ different from the unit interval,
e.g., the unit square with uniform measure or a sphere with uniform measure.
Although most of these~$J$ are standard probability spaces, and thus can be transformed
into~$I$ by an invertible measure-preserving transformation,
these transformations typically destroy regularity and symmetries,
see the example of prefix attachment graphs~\cite[Figure 11.3]{lovasz2012large} or the geodesic graphon on a torus of Figure~\ref{fig:wrong_space} below.
Second, we will see that interesting dynamically invariant subspaces on~$J$ can be understood by analyzing the dynamics on a different graphon, on another space~$J'$, and studying the
edge-preserving maps connecting~$J$ and~$J'$.
Third, allowing the index space~$J$ to be a discrete probability space provides a common framework for network dynamics on both finite graphs and graph limits.

Note also that one key difference between graph dynamical systems and graphon dynamical system is that in the latter the vertex set is endowed with a (possibly non-uniform) probability measure. 
In terms of the dynamics, this means that some vertices may be more influential than others.
Moreover, the kernel~$W$, which generalizes the adjacency matrix of a graph, can assume non-integer values between $0$~and~$1$.
One can think of a graphon~$(J,W)$ and a (possibly infinite) vertex-weighted and edge-weighted graph.

\subsection{Graph limits, graphons, and equivalence} \label{sec:graph_limit_theory}
Graphons have been introduced as limit of convergent dense graph sequences; see~\cite{lovasz2012large} for details.
Roughly speaking, a graph sequence is dense if the number of edges grows quadratically
in the number of vertices.
Convergence can be defined in terms of homomorphism densities (as we will not use this concept here, we refer to~\cite{lovasz2012large} for a definition): A dense graph sequence~$(G_n)_n$ is convergent if for every finite graph~$F$ the sequence of homomorphism densities~$(t(F, G_n))_n$ is convergent. 
One of the main results of the theory is that for every convergent dense graph sequence there is a graphon~$(J,W)$ such that~$t(F, G_n)$ converges to~$t(F, W)$.
It turns out that several~$(J,W)$ are limit of the same graph sequence.
Therefore, a suitable notion of isomorphism is necessary.

\begin{definition}
Two graphons~$(J_1,W_1)$ and~$(J_2,W_2)$ are \emph{isomorphic up to nullsets} if there is a measure preserving transformation~$\varphi: J_1\to J_2$ which is invertible up to
nullsets and satisfies~$W_2=W_1^\varphi$ almost everywhere,
\davide{where we define~$W^\varphi(x,y) = W(\varphi(x), \varphi(y))$.}
\end{definition}

This notion of isomorphism is the most relevant for our purposes since, as we will see, it preserves dynamics.

\begin{definition}
Two graphons~$(J_1,W_1)$ and~$(J_2,W_2)$ are~\emph{weakly isomorphic}
if they have the same homomorphism densities or,
equivalently, if they are limit of the same graph sequence (again see~\cite{lovasz2012large}).
\end{definition}

Two graphons that are limit of the same graph sequence can lie on completely different probability space~$J$. A consequence is that weakly isomorphic graphons may lead to wildly different graphon dynamical systems as we discuss further below.

\begin{example} \label{ex:random_graph}
For example, with probability~$1$ a sequence of \ER random graphs on~$n$ nodes converges to both the constant graphon~$W=1/2$ on~$J=I$ and the constant graphon~$W=1/2$ on the trivial probability space~$J=\{1\}$ with~$\mu(\{1\})=1$; these graphons are weakly isomorphic but not
isomorphic up to nullsets. As we will see in Section~\ref{sec:constant_graphon}
the first one supports a rich infinite-dimensional dynamical system,
while the second one is one-dimensional.
\end{example}

One of the contribution of this paper is clarifying the role of weak isomorphism in dynamics.

Defining automorphisms for graphons involves some nuances.
For $\varphi:J\to J$ write $W^\varphi(x,y) = W(\varphi(x), \varphi(y))$ as in~\cite{lovasz2012large}.
Requiring $W^\varphi(x,y) = W(x,y)$ for almost every~$x,y$ is not sufficient, as noted in~\cite{lovasz2015automorphism}:
The reason is that, being two measurable functions essentially the same up to nullsets, this definition would allow every permutation of a finite set of points to be an automorphism. 
For example, every graphon would have transitive automorphism group.
Following~\cite{lovasz2015automorphism} we define:

\begin{definition} \label{def:graphon:automorphism}
Let~$(J, W)$ be a graphon. 
A~\emph{graphon automorphism} is an invertible function~$\varphi:J\to J$ satisfying
\begin{itemize}
\item [\bf (A1)] the function $\varphi$ is measure preserving;
\item [\bf (A2)] for every~$x\in J$ and for almost every~$y\in J$
	we have~$W(\varphi(x),\varphi(y))=W(x,y)$.
\end{itemize}
We write~$\Aut(J,W)$ for the group of graphon automorphisms.
\end{definition}

Weakly isomorphic graphons can have wildly different automorphism groups.
To address this fact,
in~\cite{lovasz2015automorphism} the authors define the automorphism group on a specific class of graphons, called twin-free graphons.
Every graphon is weakly isomorphic to
a twin-free graphon~\cite[Proposition 13.3]{lovasz2012large} and the automorphism groups
of twin-free graphons are somehow better behaved.
Since weak isomorphisms do not preserve dynamics, we cannot restrict our analysis
of automorphism groups to twin-free graphons,
although they will play a central role in Section~\ref{sec:homogeneous_twins}.


\subsection{Graphon dynamical systems} \label{sec:graphon_dynamics}

We now turn back to graphon dynamical systems~\eqref{eq:main:graphon} and their properties. First note that the dynamics is well-defined:

\begin{lemma} \label{lem:existence_uniqueness}
Equation~\eqref{eq:main:graphon} defines a
dynamical system on~$L^1(J)$.
\end{lemma}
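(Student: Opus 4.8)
The plan is to verify that the right-hand side of~\eqref{eq:main:graphon} defines a globally Lipschitz vector field on the Banach space~$L^1(J)$, so that the classical Picard--Lindel\"of theorem for ODEs in Banach spaces gives existence, uniqueness, and continuous dependence on initial conditions, hence a well-defined flow. Concretely, write $F: L^1(J) \to L^1(J)$ for the map $F(u)(x) = f\p{u_x, (Gu)(x)}$ where $(Gu)(x) = \int_J W(x,y) g(u_x, u_y)\,\d\mu(y)$, and the goal is to show $F$ is Lipschitz with respect to the $L^1$ norm.

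First I would control the inner coupling term. Since $g$ is Lipschitz, there is a constant $L_g$ with $\abs{g(a,b)-g(a',b')} \le L_g(\abs{a-a'}+\abs{b-b'})$; combined with $0 \le W \le 1$ and $\mu(J)=1$, this gives, for $u,v \in L^1(J)$,
\[
	\abs{(Gu)(x)-(Gv)(x)} \le L_g \abs{u_x - v_x} + L_g \int_J \abs{u_y - v_y}\,\d\mu(y),
\]
and integrating over $x$ yields $\norm{Gu - Gv}_{L^1} \le 2L_g \norm{u-v}_{L^1}$. One also needs $Gu \in L^1(J)$ in the first place; measurability of $(x,y)\mapsto W(x,y)g(u_x,u_y)$ follows from measurability of $W$ and of $u$ together with continuity of $g$, and Fubini/Tonelli gives integrability since $\abs{g(u_x,u_y)} \le \abs{g(0,0)} + L_g(\abs{u_x}+\abs{u_y})$ is integrable on $J\times J$. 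Then, using the Lipschitz bound on $f$ with constant $L_f$, $\abs{F(u)(x)-F(v)(x)} \le L_f\abs{u_x-v_x} + L_f\abs{(Gu)(x)-(Gv)(x)}$, and integrating and applying the previous estimate gives $\norm{F(u)-F(v)}_{L^1} \le L_f(1+2L_g)\norm{u-v}_{L^1}$, so $F$ is globally Lipschitz. Global Lipschitz continuity also prevents finite-time blow-up, so the flow $\flow_t$ is defined for all $t \in \R$.

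The main technical obstacle is measure-theoretic bookkeeping rather than any deep analytic difficulty: one must check that $F$ genuinely maps (equivalence classes of) $L^1(J)$ functions to $L^1(J)$ functions---in particular that the pointwise-defined expressions are measurable and that changing $u$ on a nullset changes $F(u)$ only on a nullset---and one must be careful that the Banach-space ODE theory is being applied on a genuine Banach space (so $J$ should be, e.g., a standard probability space, or at least $L^1(J)$ complete, which it is for any measure space). I would also remark that the solution obtained is the $L^1$-valued Carath\'eodory/mild solution, and that for each fixed $t$ equation~\eqref{eq:main:graphon} then holds for $\mu$-almost every $x$, consistent with the interpretation stated just before the lemma. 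Finally I would note that the same argument applies verbatim with $L^1(J)$ replaced by $L^\infty(J)$ or $L^2(J)$ if preferred, since the key inequalities only used $\norm{W}_\infty \le 1$ and $\mu(J)=1$.
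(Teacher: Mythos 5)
Your proof is correct and follows essentially the same strategy as the paper: establish global Lipschitz continuity of the right-hand-side operator on $L^1(J)$ (with the same Lipschitz constant $K_f(1+2K_g)$, up to notation) and then invoke Banach-space ODE theory. The one place you diverge from the paper is the extension to negative time: you simply observe that Picard--Lindel\"of for a globally Lipschitz vector field gives existence and uniqueness in both time directions, whereas the paper cites a theorem of Brezis that is stated only for $t\in[0,\infty)$ and therefore constructs the backward flow by hand, introducing the auxiliary semiflow $\Psi_t$ for the vector field with $f$ replaced by $-f$ and verifying $\Psi_t(\Phi_t(u))=u$. Your route is marginally more direct; the paper's is dictated by the particular reference it chose. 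Your additional remarks on measurability and on why $F$ is well defined on equivalence classes are reasonable points the paper leaves implicit.
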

\begin{proof}
Let~$K_f, K_g \geq 0$ be Lipschitz constants for the functions~$f,g$ with respect
to the norm~$\norm{(x,y)}_1 = \abs{x}+\abs{y}$.
Let~$\mathcal F (u)$ denote the right hand side of~\eqref{eq:main:graphon}.
For every~$u,v\in L^1(J)$ we have
\begin{align*}
	\norm{\mathcal F (u) - \mathcal F (v)}_1
	& \leq
		\int_{J} K_f \p {
			\abs{u_x - v_x}
			+ K_g \int_{J} \abs{u_x - v_x}
			+ \abs{u_y - v_y} \d \mu(y)
		}
		\d \mu(x) \\
	& \leq
		\p{K_f + 2 K_f K_g} \norm{u-v}_1.
\end{align*}
Therefore the operator~$\mathcal F$ is Lipschitz continuous.
By~\cite[Theorem 7.3]{brezis2011functional} solutions exist and are unique for~$t\in[0,\infty)$.
Let~$\Phi_t: L^1(J) \times [0,\infty)\to L^1(J)$ denote the induced semiflow.

Now consider a variation of~\eqref{eq:main:graphon} in which~$f$ has been replaced
by~$-f$. The same argument applies, giving another
semiflow~$\Psi_t: L^1(J) \times [0,\infty)\to L^1(J)$.
Now fix~$t\geq 0$.
By differentiating, it is easy to see that for every~$s\in [0,t]$
the identity~$\Phi_{t-s} (u) = \Psi_{s} (\Phi_t(u))$ holds.
By taking~$s=t$ we have~$\Psi_{t} (\Phi_t(u)) = u$
and similarly one can prove that~$\Phi_{t} (\Psi_t(u)) = u$.
Therefore the semiflow~$\Phi$
can be extended to a flow by defining~$\Phi_{t} = \Psi_{-t}$ for~$t<0$.
\end{proof}

\subsubsection{Examples of graphon dynamical systems}
Recall that for a graphon~$(J,W)$, we denote by~$\mathcal D(J,W)$ the graphon dynamical system and by~$\flow^W_t(u)$ the associated {flow}. 
Note that the dynamical system depends on the probability measure~$\mu$ on the index space~$J$ as well as~$W$.
We give some examples.

\begin{example} [Dynamics on a graph] \label{ex:graph}
Let~$J$ be the set~$\{1,\ldots,n\}$ endowed with uniform probability.
Let~$\{W(x,y)=A_{x,y}\}_{x,y=1,\ldots,n}$ be the adjacency matrix of a graph.
Then~\eqref{eq:main:graphon} reduces to the dynamical system
on a graph given in equation~\eqref{eq:main:graph}:
\begin{equation}\label{eq:GraphGraphon}
	\dot u_x = f\p{u_x, \frac{1}{n} \sum_{y=1}^n A_{x,y} g(u_x, u_y)}.
\end{equation}
\end{example}

\begin{example} [Canonical Embedding] \label{ex:canonical_embedding}
Let~$A$ be the adjacency matrix of a graph with~$n$ vertices.
The \emph{canonical embedding} of the graph is a graphon on $J=I$.
Divide~$I$ into~$n$ intervals, so that~$I^2$ is divided into~$n^2$ squares.
Define $W(x,y) = A_{k,j}$ for $x\in[(k-1)/n,k/n]$, $y\in[(j-1)/n,j/n]$.
The function~$W$ associates~$0,1$ to the squares according to the adjacency matrix of the graph,
see Figure~\ref{fig:canonical_embedding}.
The resulting graphon dynamical system is distinct from~\eqref{eq:GraphGraphon}. 
However, we will see that if the initial condition is constant on the intervals $[(k-1)/n,k/n]$, $k\in\sset{1,\dotsc,n}$ then the resulting dynamics reduce to~\eqref{eq:GraphGraphon}.
\end{example}

\begin{figure}[ht]
\centering
\includegraphics{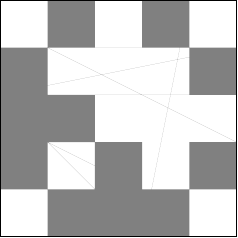}
\caption{This graphon has a block structure which is a canonical embedding
on the unit interval
of a finite graph with~$5$ vertices.}
\label{fig:canonical_embedding}
\end{figure}

\begin{example} [Dynamics on a weighted graph] \label{ex:weighted_graph}
We now generalize the previous example to weighted graphs given by a edge-weight function $W: \{1,\ldots,n\}^2\to [0,1]$ on weighted vertices.
The vertex weights are given by discrete probability distribution~$\mu=(\mu_x)_{x=1,\ldots,n}$.
The graphon dynamical system is
\[
	\dot u_x = f\p{u_x, \sum_{y=1}^n  \mu_y W_{x,y} g(u_x, u_y)}.
\]
As a concrete example, the finite graphon
\begin{center}
\begin{tikzcd}
	3/10 \arrow[dash]{r}{1} & 1/2 \arrow[dash]{r}{9/10} & 1/5
\end{tikzcd}
\end{center}
is associated to the system
\[
\begin{cases}
\dot u_1 = f(u_1, 1/2 \cdot g(u_1,u_2)) \\
\dot u_2 = f(u_2, 3/10 \cdot g(u_2,u_1) + 9/50 \cdot g(u_2, u_3)) \\
\dot u_3 = f(u_3, 9/20 \cdot g(u_3,u_2)).
\end{cases}
\]
One can define a canonical embedding of a weighted graph with vertex weights
by partitioning~$I$ into intervals of possibly different length.
\end{example}

\begin{example} [Countable graph]
In this example we will see that graphons, although typically understood as limit
objects for dense graphs, can represent dynamical systems on sparse infinite graphs,
as long as some finiteness property is satisfied.
Let~$J$ be a probability space on the set of natural numbers~$\N$.
Let~$\mu = (\mu_k)_{k\in \N}$ denote the probability measure.
For~\emph{any} graph~$G$ on~$\N$ the infinite coupled system of differential equations
\[
	\dot u_j = f\p{u_j, \sum_{k \in \N} \mu_j G_{j,k} g(u_j,u_k)}
\]
defines a dynamical system on~$L^1(J)$. An example is given
in Figure~\ref{fig:infinite_tree}.
Notice that the sum~$\sum_j \mu_j$ is finite (by definition, it is equal to~$1$);
if the vertex weights would satisfy~$\sum_{j\in\N} \mu_j = \infty$, the above
system would not be representable as a graphon system.
\end{example}

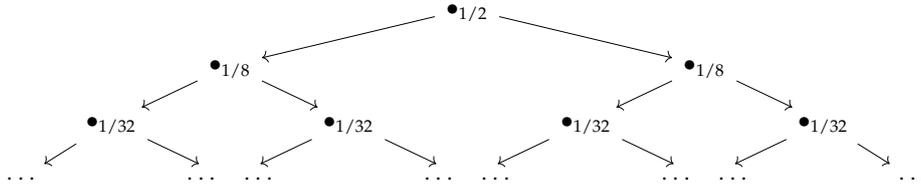
\begin{figure} [h!]
\adjustbox{scale=0.8,center}{
\begin{tikzcd} [sep=small]
	&&&& \bullet_{1/2} \arrow[dll] \arrow[drr] &&&& \\
	&& \bullet_{1/8} \arrow[dl] \arrow[dr] && &&\bullet_{1/8} \arrow[dl] \arrow[dr] && \\
	& \bullet_{1/32} \arrow[dl] \arrow[dr] &&\bullet_{1/32} \arrow[dl] \arrow[dr]&
		&\bullet_{1/32}\arrow[dl] \arrow[dr] &&\bullet_{1/32} \arrow[dl] \arrow[dr]& \\
	\cdots & \text{} & \cdots\quad \cdots & \text{}
		& \cdots\quad \cdots &\text{}
		& \cdots\quad \cdots &\text{}&\cdots \\
\end{tikzcd}
}
\caption{Infinite binary tree.}
\label{fig:infinite_tree}
\end{figure}


\subsection{Symmetries of graphon dynamical systems} 
\label{sec:graphon_dynamics_symmetries}

Theorem~\ref{thm:isomorphism:graph} shows that
an isomorphism~$G\to H$ between graphs corresponds to an isometry~$\D(H)\to \D(G)$ of (any) associated graphon dynamical system.
The main result of this section is to generalize the theorem
to graphons.
There is a key difference between graph and graphon dynamical systems: While isomorphism between graphs are necessarily invertible, for graphons they only need to be measure-preserving (but not necessarily invertible).

For completeness, we recall some basic notions. Let~$J_1=(\Omega_1, \mathcal A_1, \mu_1)$ and~$J_2 = (\Omega_2, \mathcal A_2, \mu_2)$
be two probability spaces. A \emph{measurable map} is a map~$\varphi: \Omega_1\to\Omega_2$
such that the preimage of any measurable set is measurable:
$\varphi^{-1}(A) \in \mathcal A_1$ for every~$A\in \mathcal A_2$.
Every measurable map~$\varphi:J_1\to J_2$ induces on~$J_2$ a probability, called the
\emph{push forward} of~$\mu_1$, defined by
\[
	(\varphi\#\mu_1)(A) = \mu_1(\varphi^{-1}(A)), \qquad A\in \mathcal A_2.
\]
The fundamental property of the push forward is the
change of variable formula~\cite[Lemma 1.2]{walters2000introduction}:
for every~$f\in L^1(J_1)$
\[
	\int_{\Omega_1} f(\varphi(x)) \d \mu_1 (x) = \int_{\Omega_2} f(x) \d (\varphi\#\mu_1) (x).
\]
A measurable map~$\varphi:J_1\to J_2$ is called \emph{measure preserving}
if~$\varphi\#\mu_1=\mu_2$.
Equivalently, a measurable map~$\varphi$ is measure-preserving if and only if
for every~$f\in L^1(J_1)$
\[
	\int_{\Omega_1} f(\varphi(x)) \d \mu_1 (x) = \int_{\Omega_2} f(x) \d \mu_2 (x).
\]

Notice that measure-preserving maps are not invertible in general.
We are ready for the main result of the section:

\begin{theorem} [Correspondence Theorem] \label{thm:mp_gives_isometry}
Let~$(J_1,W_1)$ and~$(J_2,W_2)$ be two graphons.
Suppose that
\[
	\varphi: J_1\to J_2
\]
is measure preserving
and~$W_1=W_2^\varphi$ holds almost everywhere.
Then
\[
	\varphi^*: L^1(J_2) \to L^1(J_1), \qquad (\varphi^* v)_x = v_{\varphi(x)}
\]
is an isometry mapping solutions to solutions.
In particular, the following diagram is commutative:
\begin{center}
\begin{tikzcd}
(J_1, W_1) \arrow[r, "\varphi"] \arrow[d] & (J_2, W_2) \arrow[d] \\
\mathcal D (J_1, W_1) & \mathcal D (J_2, W_2) \arrow[l, "\varphi^*"]
\end{tikzcd}.
\end{center}
\end{theorem}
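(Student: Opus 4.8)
The plan is to verify the three assertions in turn: that $\varphi^*$ maps $L^1(J_2)$ into $L^1(J_1)$ and is an isometry, that it sends solutions of $\mathcal D(J_2,W_2)$ to solutions of $\mathcal D(J_1,W_1)$, and that consequently the diagram commutes. The first point is essentially the change-of-variables formula recalled just before the statement: for $v\in L^1(J_2)$ we have $\norm{\varphi^* v}_{L^1(J_1)} = \int_{\Omega_1}\abs{v(\varphi(x))}\,\d\mu_1(x) = \int_{\Omega_2}\abs{v(x)}\,\d\mu_2(x) = \norm{v}_{L^1(J_2)}$, using that $\varphi$ is measure preserving; this simultaneously shows $\varphi^* v\in L^1(J_1)$ and that $\varphi^*$ is a linear isometry.

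Next I would show $\varphi^*$ intertwines the two vector fields. Let $\mathcal F_i$ denote the right-hand side of~\eqref{eq:main:graphon} for the graphon $(J_i,W_i)$. The key computation is that for $v\in L^1(J_2)$ and (almost every) $x\in J_1$,
\begin{align*}
\int_{J_1} W_1(x,y)\, g\p{(\varphi^*v)_x,(\varphi^*v)_y}\,\d\mu_1(y)
&= \int_{J_1} W_2(\varphi(x),\varphi(y))\, g\p{v_{\varphi(x)},v_{\varphi(y)}}\,\d\mu_1(y)\\
&= \int_{J_2} W_2(\varphi(x),z)\, g\p{v_{\varphi(x)},v_z}\,\d\mu_2(z),
\end{align*}
where the first equality uses $W_1 = W_2^\varphi$ a.e.\ together with $(\varphi^*v)_y = v_{\varphi(y)}$, and the second applies the change-of-variables formula to the function $z\mapsto W_2(\varphi(x),z)g(v_{\varphi(x)},v_z)$, which lies in $L^1(J_2)$ since $g$ is Lipschitz (hence of linear growth) and $v\in L^1(J_2)$. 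Feeding this into the outer function $f$ gives $\mathcal F_1(\varphi^*v)(x) = \mathcal F_2(v)(\varphi(x)) = (\varphi^*\mathcal F_2(v))(x)$, i.e.\ $\mathcal F_1\circ\varphi^* = \varphi^*\circ\mathcal F_2$. Since $\varphi^*$ is a bounded linear operator it commutes with $\frac{\d}{\d t}$, so if $t\mapsto v(t)$ solves $\dot v = \mathcal F_2(v)$ in $L^1(J_2)$ then $t\mapsto \varphi^*v(t)$ solves $\dot u = \mathcal F_1(u)$ in $L^1(J_1)$; uniqueness from Lemma~\ref{lem:existence_uniqueness} then yields $\varphi^*(\flow^{W_2}_t(v)) = \flow^{W_1}_t(\varphi^*v)$, which is precisely commutativity of the diagram.

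The main obstacle is bookkeeping the almost-everywhere qualifiers correctly: the hypothesis $W_1 = W_2^\varphi$ holds only a.e.\ on $\Omega_1\times\Omega_1$, and~\eqref{eq:main:graphon} itself is an identity that holds for a.e.\ $x$ (with an exceptional nullset that may depend on $t$), so one must check that the nullsets line up—in particular that the set of ``bad'' $x\in J_1$ where the inner integral identity fails is genuinely $\mu_1$-null, which follows from Fubini applied to $\{(x,y): W_1(x,y)\neq W_2(\varphi(x),\varphi(y))\}$. A secondary routine point is integrability of the integrand $z\mapsto W_2(\varphi(x),z)g(v_{\varphi(x)},v_z)$, handled by $\abs{W_2}\le 1$ and the Lipschitz bound $\abs{g(a,b)}\le \abs{g(0,0)} + K_g(\abs{a}+\abs{b})$. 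Everything else is a direct application of the change-of-variables formula and Lemma~\ref{lem:existence_uniqueness}.
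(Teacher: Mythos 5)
Your proof is correct and follows essentially the same route as the paper: the isometry claim is the change-of-variables identity, and the solution-preservation claim is the same inner-integral change of variables combined with $W_1 = W_2^\varphi$. Your framing as an intertwining relation $\mathcal F_1\circ\varphi^* = \varphi^*\circ\mathcal F_2$ plus uniqueness, and your explicit Fubini argument to handle the a.e.\ qualifiers, are somewhat more careful bookkeeping than the paper's direct verification, but the underlying computation is identical.
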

\begin{proof}
{
Let~$\mathrm d_{L^1(J)}$ denote the distance in~$L^1(J)$.
Let~$u,v\in L^1(J_2)$.
Since~$\varphi \# \mu_1 = \mu_2$, with the change of variable formula we obtain
\[
	\mathrm d_{L^1(J_1)} (\varphi^* u, \varphi^* v)
	= \int_{J_1} \abs{\varphi^* u - \varphi^* v} \d \mu_1
	= \int_{J_2} \abs{u - v} \d \mu_2
	= \mathrm d_{L^1(J_2)} (u, v),
\]
and thus~$\varphi^*$ is an isometry.
It remains to prove that~$\varphi^*$ maps solutions to solutions.
Let~$v$ be a solution of~$\D(J_2, W_2)$. Then for almost every~$x$
\begin{align*}
	\dot v_{\varphi(x)}
	& = f\p{v_{\varphi(x)}, \int_{J_2} g(W_2(\varphi(x),y), v_{\varphi(x)}, v_y) \d \mu_2 (y)}\\
	& = f\p{v_{\varphi(x)}, \int_{J_1} g(W_2(\varphi(x),\varphi(y)), v_{\varphi(x)}, v_{\varphi(y)})
		\d \mu_1 (y)} \\
	& = f\p{v_{\varphi(x)}, \int_{J_1} g(W_1(x,y), v_{\varphi(x)}, v_{\varphi(y)})
		\d \mu_1 (y)}.
\end{align*}
Now define~$w = \varphi^* v$, that is~$w_x = v_{\varphi(x)}$ for every~$x\in J_2$.
Then~$w \in L^1(J_1)$ and by the calculation above for almost every~$x\in J_1$ we have
\[
	\dot w_{x} = f\p{u_x, \int_{J_1} g(W_1(x,y), w_x, w_y)
		\d \mu_1 (y)},
\]
and therefore~$w= \varphi^* v$ is a solution of~$\D(J_1, W_1)$.}
\end{proof}

Since~$\varphi^*$ is an isometry, it is injective and the image is closed.
The isometry~$\varphi^*$ associated to a measure preserving transformation~$\varphi$
is known as Koopman operator.

Notice that a map~$\varphi$ preserving adjacency~$W_1=W_2^\varphi$ does not
map solutions to solutions in general. Preserving measure is a key hypothesis.

As a corollary we obtain that graphons isomorphic up to nullsets support the
same dynamical systems, generalizing the analogous result for graphs:

\begin{corollary} \label{cor:imp_gives_isometry}
If the graphons~$(J_1,W_1)$ and~$(J_2,W_2)$ are isomorphic up to nullsets,
then the dynamical systems~$\D(J_1, W_1)$ and~$\D(J_2, W_2)$ are isometric.
\end{corollary}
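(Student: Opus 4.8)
The plan is to deduce the statement from the Correspondence Theorem (Theorem~\ref{thm:mp_gives_isometry}) applied twice, once in each direction, the second application being needed only to see that the resulting Koopman operator is a bijection. By the definition of isomorphic up to nullsets there is a measure preserving $\varphi \colon J_1 \to J_2$, invertible up to nullsets, intertwining the two kernels almost everywhere; up to replacing $\varphi$ by its inverse (which is again measure preserving and invertible up to nullsets) we may assume this takes the form $W_1 = W_2^\varphi$ a.e., which is exactly the hypothesis of Theorem~\ref{thm:mp_gives_isometry}. That theorem then gives at once that $\varphi^* \colon L^1(J_2) \to L^1(J_1)$, $(\varphi^* v)_x = v_{\varphi(x)}$, is an isometry mapping solutions of $\D(J_2,W_2)$ to solutions of $\D(J_1,W_1)$, i.e.\ $\varphi^*$ commutes with the flows, $\varphi^* \circ \flow^{W_2}_t = \flow^{W_1}_t \circ \varphi^*$.

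Next I would produce an inverse with the same properties. Let $\psi \colon J_2 \to J_1$ be a measure preserving map inverse to $\varphi$ up to nullsets, so that $\psi \circ \varphi = \id_{J_1}$ and $\varphi \circ \psi = \id_{J_2}$ hold off nullsets. I would first check that $\psi$ also satisfies the kernel identity $W_2 = W_1^\psi$ a.e.: substituting $\psi(\varphi(x)) = x$ (valid a.e.) into $W_1(x,y) = W_2(\varphi(x),\varphi(y))$ (valid a.e.\ on $J_1 \times J_1$) and using that $\varphi$ carries nullsets of $J_1$ to nullsets of $J_2$ --- applied in the product space --- yields $W_1(\psi(x'),\psi(y')) = W_2(x',y')$ for a.e.\ $(x',y') \in J_2 \times J_2$. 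A second application of Theorem~\ref{thm:mp_gives_isometry}, now to $\psi$, shows that $\psi^* \colon L^1(J_1) \to L^1(J_2)$ is an isometry mapping solutions to solutions. Finally, a direct computation from the defining formula $(\varphi^* v)_x = v_{\varphi(x)}$, together with $\psi \circ \varphi = \id$ and $\varphi \circ \psi = \id$ a.e., gives $\varphi^* \circ \psi^* = \id_{L^1(J_1)}$ and $\psi^* \circ \varphi^* = \id_{L^1(J_2)}$, since the compositions only alter a representative on a nullset. Hence $\varphi^*$ is a surjective isometry conjugating the two dynamical systems, which is the asserted isometry of $\D(J_1,W_1)$ and $\D(J_2,W_2)$.

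The substantive content is already contained in Theorem~\ref{thm:mp_gives_isometry}, so the only point that needs genuine care is the nullset bookkeeping in the middle step: one must be sure that ``$\psi$ inverts $\varphi$ up to nullsets'' really does transport the almost-everywhere kernel relation from $\varphi$ to $\psi$, which is where one uses that a measure preserving (and, here, essentially invertible) map neither creates nor destroys nullsets, now in the product space $J \times J$ rather than in $J$. Everything else is formal manipulation of the Koopman operators $\varphi^*$ and $\psi^*$ and causes no difficulty.
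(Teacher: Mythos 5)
Your proposal is correct and follows essentially the same route as the paper: take the inverse $\psi$ of $\varphi$, observe that $\psi$ is again measure preserving and satisfies the kernel identity, and conclude that $\psi^*$ is a two-sided inverse to the isometry $\varphi^*$ furnished by Theorem~\ref{thm:mp_gives_isometry}. The paper states this in three lines without spelling out the transport of the almost-everywhere kernel relation from $\varphi$ to $\psi$ on the product space, which is exactly the nullset bookkeeping you correctly identify and supply; it also does not bother applying Theorem~\ref{thm:mp_gives_isometry} a second time to $\psi$, since once $\varphi^*$ is known to be a bijective isometry mapping solutions to solutions, its inverse automatically does so as well.
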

\begin{proof}
Let~$\psi$ the inverse of~$\varphi$.
Since~$\psi$ is invertible and measure preserving, its inverse~$\psi$ is measure preserving.
We conclude by noticing that~$\psi^*$ is the inverse of~$\varphi^*$.
\end{proof}

Corollary~\ref{cor:imp_gives_isometry} shows that the index space~$J$ can be replaced by
any probability space isomorphic to~$J$ up to nullsets, without effectively changing the dynamics.
This replacement however can change the regularity of~$W$ as a function.

Recall that a graphon automorphism is \davide{a} measure preserving map~$\varphi: J\to J$
such that for every~$x$ we have~$W^\varphi(x,y) = W(x,y)$ for almost every~$y$.
In particular Corollary~\ref{cor:imp_gives_isometry} applies to graphon
automorphisms, motivating the following definition:

\begin{definition} \davide{\label{def:graphon_symmetry}}
Let~$(J,W)$ be a graphon. Let~$\varphi: J\to J$ an automorphism of the graphon.
The map~$\varphi^*: L^1(J)\to L^1(J)$ is an isometry and a symmetry
of the graphon dynamical system~$\D(J,W)$.
We call~$\varphi^*$ a \emph{graphon-induced symmetry}.
\end{definition}

By definition the graphon automorphisms of~$(J,W)$ are in one-to-one correspondence with the graphon-induced symmetries of~$\D(J,W)$.
We prefer to keep the notions separate to highlight the different action space, $J$~and~$L^1(J)$ respectively.
In later sections we will compute the automorphism group~$\Aut(J,W)$ of several graphons.

\davide{
\begin{definition} \label{def:graphon_generalized_symmetry}
Let~$(J_1,W_1)$ and~$(J_2,W_2)$ be two graphons.
Suppose that~$\varphi: J_1\to J_2$
is measure preserving and~$W_1=W_2^\varphi$ holds almost everywhere.
Then we call~$\varphi^*$ a \emph{generalized graphon-induced symmetry}.
\end{definition}
}

\subsection{Dynamically invariant subspaces and \davide{generalized} symmetries}
\label{sec:InvSubsp}
\davide{
We now discuss two ways in which dynamically invariant subspaces can arise:
as the fixed point set of a graphon symmetry,
or as the image of a generalized symmetry.}

Let~$\varphi: J\to J$ a graphon automorphism. By Corollary~\ref{cor:imp_gives_isometry}
the dual map~$\varphi^*: L^1(J)\to L^1(J)$ is a symmetry of the graphon dynamical system.
The set
\begin{equation} \label{eq:invariant_fix}
	\fix(\varphi^*) = \{u\in L^1(J) \mid u(\varphi(x)) = u(x) \ a.e.\}
\end{equation}
is a closed, dynamically invariant subspace of~$L^1(J)$.
\davide{This is a generalization of dynamically invariant cluster subspaces
on graphs~\eqref{eq:graph_cluster}.}

The second way is distinct and relates to the Koopman operator. 
Let~$\varphi$ be a map~$(J,W)\to(J',W')$ preserving both measure and adjacency, as in Corollary~\ref{thm:mp_gives_isometry}. Then
the dual map~$\varphi^*: L^1(J')\to L^1(J)$ is an isometry
embedding the dynamical system~$\D(J',W')$ into the dynamical system~$\D(J,W)$.
The set
\begin{equation} \label{eq:invariant_image}
	\varphi^*(L^1(J')) = \{ v\circ \varphi \mid v\in L^1(J') \}
\end{equation}
is a closed, dynamically invariant subspace of~$L^1(J)$.

As a particular case, take~$J'=J$ and let~$\varphi: J\to J$ be a
non-invertible measure preserving
transformation which preserves adjacency.
Although not invertible the map~$\varphi^*$ acts on~$L^1(J)$
as a symmetry of the dynamical system. \davide{}
A well known example of non-invertible measure-preserving transformation on the unit interval
is the doubling map~$x\mapsto 2x\, (\mod 1)$.
Non-invertible symmetries do not appear on graphs. Indeed
if~$J=\{1,\ldots,n\}$ with uniform probability then the measure preserving transformations
are exactly the permutations.

In the following sections we will analyze subsystems of the form~\eqref{eq:invariant_fix}
and~\eqref{eq:invariant_image}. In some cases, these two approaches will lead to two
alternative proofs of the same statement.


\section{Homogeneous coupling: twins yield clusters} \label{sec:homogeneous_twins}

In this section we consider graphons that describe homogeneous coupling
in the sense that there are `large' sets of vertices with the same neighbors.
Examples are cluster dynamics, all to all coupling and finitely many coupled populations.

\begin{definition}
Let~$(J,W)$ be a graphon. Two vertices~$x,x'\in J$ are~\emph{twins}
if the functions~$W(x,\cdot) = W(x',\cdot)$ are equal almost everywhere.
More generally, let~$A\subseteq J$ be a measurable set. We say that the elements of~$A$
are~\emph{twins} if
the map~$y\mapsto W(x,y)$ from~$J$ to~$\R$ is independent of~$x\in A$.
A graphon is~\emph{twin-free} if it contains no positive measure set of twin vertices.
\end{definition}

\begin{definition}
Let~$(J,W)$ be a graphon.
Two vertices~$x,x'\in J$ are \emph{twins} if~$W(x,y)=W(x',y)$ for almost every~$y\in J$.
More generally, the elements of a measurable set~$A \subseteq J$ are \emph{twins}
if for every~$x,x'\in A$ we have~$W(x,y)=W(x',y)$ for almost every~$y\in J$.
A graphon is~\emph{twin-free} if it contains no positive measure set of twin vertices.
\end{definition}

Put differently, for a set of twins~$A$ the map~$y\mapsto W(x,y)$, as a map~$J\to \R$,
is independent on~$x\in A$.

\begin{figure} [ht]
\centering
\includegraphics[width=0.35\textwidth]{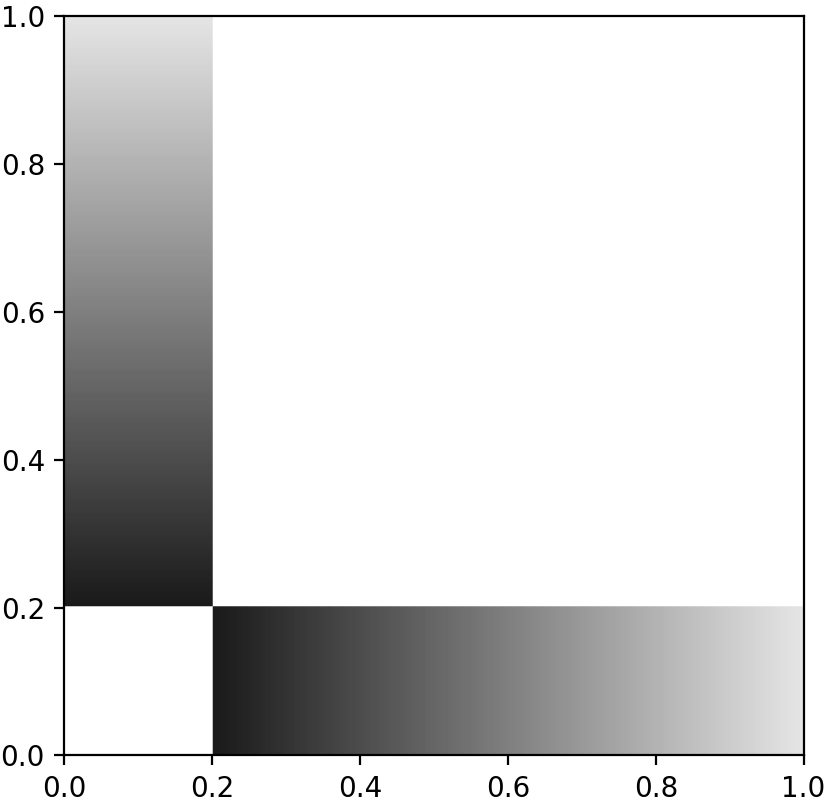}
\caption{The connection strength between a vertex~$x\in [0,1/5]$
and a vertex~$y \in (1/5,1]$ depends on~$y$ but not on~$x$.
}
\label{fig:twins}
\end{figure}

The twin relation is an equivalence relation. Moreover, the sets of the
partition are measurable. The index space~$J$ is union
of measurable sets of twins.
For example, in the graphon of Figure~\ref{fig:twins} the interval~$[0,1/5]$ is a set
of twins; in this case the partition is formed by~$[0,1/5]$ and
the singletons~$\{x\}$ for~$1/5<x\leq 1$.

\begin{definition}
Let~$A\subseteq J$ be a set of positive measure. The~\emph{cluster subspace}~$\cluster(A)$
associated to~$A$ is the subset of functions~$u\in L^1(J)$
such that~$u$ is constant on~$A$ up to nullset.
\end{definition}

\davide{
The following results, Theorem~\ref{thm:twins_constant},
shows that twin vertices starting synchronized remain synchronized over time.
In the case of finite graphs one can prove this fact by comparing the evolution
equations of a pair of twin vertices.
The context of graphons poses some technical challenges.
First, we cannot look at only two vertices at once:
since functions in~$L^1$ are identified
up to nullsets, for every fixed~$x,y\in L^1([0,1])$ we have
\[
	\{u \in L^1([0,1]) \mid u_x=u_y\} = L^1([0,1]).
\]
It is necessary to work with a set of twins~$A$ of positive measure at once,
rather than individual pairs.
Second, the evolution equation~\eqref{eq:main:graphon} holds for every~$t$
and almost every~$x\in J$, that is, for every~$t$ there is a nullset~$K_t$ such that
the equation holds for every~$x \in J\setminus K_{t}$.
However, we cannot fix a single set of measure zero that works for all~$t$:
The set~$K_t$ depends on~$t$, and the set~$\bigcup_t K_t$ can have positive measure.
}

We give two proofs of Theorem~\ref{thm:twins_constant},
one in the general case and one in the special case of the unit
interval~$J=I$.
The proofs are based on the two different applications~\eqref{eq:invariant_fix}
and~\eqref{eq:invariant_image} of the Correspondence Theorem~\ref{thm:mp_gives_isometry},
as announced in Section~\ref{sec:InvSubsp}.

\begin{theorem} \label{thm:twins_constant}
Let~$(J,W)$ be a graphon and~$A\subseteq J$ a set of positive measure.
Suppose that all the vertices in~$A$ are twins.
Then the cluster subspace~$\cluster(A)$ is (forward and backward) dynamically invariant.
\end{theorem}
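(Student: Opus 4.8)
The plan is to deduce Theorem~\ref{thm:twins_constant} from the Correspondence Theorem~\ref{thm:mp_gives_isometry} by constructing an explicit quotient-type map that collapses the twin set $A$ to a single point. Concretely, write $J = (\Omega, \mu)$ and let $a = \mu(A) > 0$. I would build a second probability space $J'$ by replacing $A$ with a single atom: let $\Omega' = (\Omega \setminus A) \sqcup \{\ast\}$, equip it with the measure $\mu'$ that restricts to $\mu$ on $\Omega \setminus A$ and assigns $\mu'(\{\ast\}) = a$, and define the obvious surjection $\varphi : J \to J'$ that is the identity on $\Omega \setminus A$ and sends all of $A$ to $\ast$. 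This $\varphi$ is measurable and measure preserving by construction. Next I would define a kernel $W'$ on $J'$ by transporting $W$: for $x, y \in \Omega \setminus A$ set $W'(x,y) = W(x,y)$; for $x \in \Omega \setminus A$ set $W'(x, \ast) = W'(\ast, x) = W(x, y_0)$ for any fixed representative $y_0 \in A$ (well-defined up to nullsets since elements of $A$ are twins, so $y \mapsto W(x,y)$ is a.e.\ independent of the choice within $A$); and set $W'(\ast, \ast)$ to the a.e.-constant value $W(x_0, x_0)$ or, more carefully, handle the diagonal via the twin property on $A \times A$. The point is that $W^\varphi = W'(\varphi(\cdot), \varphi(\cdot))$ agrees with $W$ almost everywhere on $J \times J$ precisely because collapsing twins does not change the coupling seen by any vertex.

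With $(J', W')$ and $\varphi$ in hand, Theorem~\ref{thm:mp_gives_isometry} applies and gives the isometry $\varphi^* : L^1(J') \to L^1(J)$ mapping solutions of $\D(J', W')$ to solutions of $\D(J, W)$. The image $\varphi^*(L^1(J'))$ is, by~\eqref{eq:invariant_image}, a closed dynamically invariant subspace of $L^1(J)$. It remains only to identify this image with the cluster subspace $\cluster(A)$: a function $u \in L^1(J)$ lies in $\varphi^*(L^1(J'))$ iff $u = v \circ \varphi$ for some $v \in L^1(J')$, which happens iff $u$ is a.e.\ constant on the fibers of $\varphi$, and the only nontrivial fiber is $A$. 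Hence $\varphi^*(L^1(J')) = \cluster(A)$, and we are done. This is the ``image of Koopman operator'' proof promised in Section~\ref{sec:InvSubsp}.

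For the alternative proof in the case $J = I$, the idea is instead to realize $\cluster(A)$ as a fixed-point set $\fix(\sigma^*)$ of a graphon automorphism $\sigma$. One would like $\sigma : I \to I$ to be a measure-preserving involution (or a suitable group of maps) whose orbits inside $A$ are exactly what forces $u$ to be constant there. A clean choice: after an isomorphism-up-to-nullsets one may assume $A = [0, a]$; then take $\sigma$ to be an appropriate translation/reflection of $[0,a]$, or better, invoke a family of such maps and intersect their fixed-point sets, using that a function on $[0,a]$ fixed by enough measure-preserving transformations is a.e.\ constant. Each such $\sigma$ satisfies (A1) trivially and (A2) because it moves points only within the twin set $A$, so $W(\sigma(x), \sigma(y)) = W(x,y)$ a.e.; then $\fix(\sigma^*)$ is dynamically invariant by~\eqref{eq:invariant_fix}, and the intersection over the family is $\cluster(A)$.

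The main obstacle is the careful treatment of nullsets and of the diagonal when defining $W'$: the twin hypothesis says $y \mapsto W(x,y)$ is independent of $x \in A$, which controls the off-diagonal block $A \times (J\setminus A)$ and, with symmetry, the block $A \times A$ away from the diagonal, but one must check that the exceptional nullsets can be chosen uniformly enough that $W^\varphi = W$ holds a.e.\ on all of $J \times J$ (not just for a.e.\ $x$ and then a.e.\ $y$, which is what Fubini-type arguments naturally give). For the $J = I$ proof, the analogous subtlety is exhibiting a single automorphism, or a countable family, whose common fixed points are exactly the functions constant on $A$ — this requires knowing that $[0,a]$ carries measure-preserving maps with no nontrivial common invariant functions, e.g.\ irrational rotations of the rescaled interval viewed as a circle, together with the verification that these genuinely satisfy the pointwise-in-$x$ condition (A2) rather than merely an a.e.-in-$(x,y)$ condition.
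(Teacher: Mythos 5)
Your proposal matches the paper's two-proof structure: for general $J$ you collapse the twin set $A$ to a single atom and invoke Theorem~\ref{thm:mp_gives_isometry} to identify $\cluster(A)$ with the image of the Koopman operator (the paper does the same quotient, but phrases it via the sigma-algebra $\widetilde{\mathcal A}$ of sets not separating $A$ and the conditional expectation $\widetilde W = E(W \mid \widetilde{\mathcal A}\times\widetilde{\mathcal A})$, citing Lov\'asz Proposition~13.3, rather than adjoining an explicit atom $\ast$), and for $J=I$ you realize $\cluster(A)$ as the fixed-point set of an ergodic measure-preserving transformation of $A$ extended by the identity, exactly as the paper does. The nullset and diagonal subtleties you flag at the end are real but are precisely what the cited Lov\'asz result takes care of, so your argument is essentially the paper's.
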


\begin{proof}
This first proof is based on~\cite[Proposition 13.3]{lovasz2012large}.
Let~$J = (\Omega, {\mathcal A},\mu)$. Let~$\widetilde {\mathcal A}$
be the sigma-algebra of those sets in~$\mathcal A$ that do not separate any point of~$A$.
Let~$\widetilde W = E(W \mid \widetilde {\mathcal A} \times \widetilde {\mathcal A})$
the conditional expectation of the function~$W$
with respect to~$\widetilde {\mathcal A} \times \widetilde {\mathcal A}$.
Let~$\widetilde J = (\widetilde \Omega, \widetilde {\mathcal A}, \tilde \mu)$
be the quotient probability space obtained by identifying the elements of~$A$ and let~$\varphi: J\to \widetilde J$
denote the projection.

Then~$\varphi$ is measure preserving and satisfies~$\widetilde {W}^\varphi = W$ almost everywhere, see~\cite[Proposition 13.3]{lovasz2012large} for details.
By the Correspondence Theorem~\ref{thm:mp_gives_isometry} the map~$\varphi^*$
embeds~$\mathcal D(\widetilde J, \widetilde W)$ isometrically into~$\mathcal D(J,W)$.
The image~$\varphi(L^1(\widetilde J))$ is the subset of~$L^1(J)$ consisting of functions
of the form~$v\circ {\varphi}$ for some~$v\in L^1(\widetilde J)$. These
are exactly the functions constant on~$A$ up to nullset.
\end{proof}

In the special case of the unit interval~$J = I$ we can give a somewhat simpler proof,
based on ergodic transformations.

\begin{proof} [Proof of Theorem~\ref{thm:twins_constant} in the case $J=I$]
The unit interval is an atomless standard probability space.
Every measurable subset of a standard probability space is
standard~\cite{rokhlin1949fundamental,haezendonck1973abstract}.
Therefore~$A$ endowed with the normalized measure is an atomless standard probability space.
All atomless standard probability spaces are isomorphic up to nullset
to the unit interval~\cite{halmos1942operator}.

In particular~$A$ supports an ergodic transformation~$\gamma: A\to A$.
Define~$\gamma$ on~$I\setminus A$ as the identity map.
A function in~$L^1(I)$ is $\gamma$-invariant if and only if its restriction on~$A$
is $\gamma$-invariant and, since $\gamma$ is ergodic on~$A$,
if and only if it constant over~$A$~\cite[Theorem 1.6]{walters2000introduction}.
\end{proof}

Let us return to Figure~\ref{fig:twins}.
By Theorem~\ref{thm:twins_constant} the set of functions constant over~$[0,1/5]$
up to nullsets is dynamically invariant, that is, it is a cluster.
On the other hand, notice that
the set of functions constant over~$[1/5,1]$ up to nullsets is not dynamically invariant.

In Theorem~\ref{thm:twins_constant} we reduce the space by identifying a set of twins.
However, inspired by~\cite{lovasz2012large},
one can identify all the set of twins in~$(J,W)$,
obtaining the~\emph{twin-free quotient}~$(\widetilde J, \widetilde W)$.
The sigma-algebra and probability on the quotient are constructed as in the first proof of~Theorem~\ref{thm:twins_constant}.
Dynamics on the twin-free quotient is given by:

\begin{proposition} \label{prop:twin_free_core}
Let~$(J,W)$ be a graphon and~$(\widetilde J, \widetilde W)$ the associated
twin-free quotient. Then
\begin{equation} \label{eq:twin-free_core}
	\ppp{
			u\in L^1(\davide{J}) \mid u_x = u_y \text{ for all twins } x,y
		}
\end{equation}
is a closed, dynamically invariant subspace of~$J$.
The dynamics of~$\D(J,W)$ restricted to~\eqref{eq:twin-free_core}
is isometric to the dynamics on the twin-free quotient~$\D(\widetilde J, \widetilde W)$.
\end{proposition}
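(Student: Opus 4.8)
The plan is to apply the Correspondence Theorem~\ref{thm:mp_gives_isometry} with the quotient projection $\varphi\colon J\to\widetilde J$ onto the twin-free quotient, exactly as in the first proof of Theorem~\ref{thm:twins_constant}, but now identifying \emph{all} twin classes simultaneously rather than a single set $A$. First I would record the construction of $(\widetilde J,\widetilde W)$: the $\sigma$-algebra $\widetilde{\mathcal A}$ consists of those sets in $\mathcal A$ that do not separate any pair of twins, $\widetilde W = E(W\mid \widetilde{\mathcal A}\times\widetilde{\mathcal A})$, $\tilde\mu$ is the restriction of $\mu$, and $\varphi$ is the canonical projection. The key input, borrowed verbatim from~\cite[Proposition 13.3]{lovasz2012large}, is that $\varphi$ is measure preserving and $\widetilde W^{\varphi}=W$ almost everywhere. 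Granting this, Theorem~\ref{thm:mp_gives_isometry} immediately gives that $\varphi^*\colon L^1(\widetilde J)\to L^1(J)$, $(\varphi^* v)_x = v_{\varphi(x)}$, is an isometry mapping solutions of $\D(\widetilde J,\widetilde W)$ to solutions of $\D(J,W)$.

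Next I would identify the image of $\varphi^*$ with the subspace~\eqref{eq:twin-free_core}. A function of the form $v\circ\varphi$ is, by construction of $\widetilde{\mathcal A}$, constant on each twin class, so it lies in~\eqref{eq:twin-free_core}; conversely, any $u\in L^1(\Omega)$ that agrees on twins factors through the quotient map as $u = v\circ\varphi$ for a (unique, up to nullsets) $v\in L^1(\widetilde J)$ — this is just the statement that $\widetilde{\mathcal A}$-measurable functions are precisely those constant on twin classes. Since $\varphi^*$ is an isometry its image is closed; since it maps solutions to solutions the image is dynamically invariant; and since $\varphi^*$ is an isometric bijection onto its image intertwining the two flows, the restriction of $\D(J,W)$ to~\eqref{eq:twin-free_core} is isometric to $\D(\widetilde J,\widetilde W)$, as claimed.

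The only genuinely delicate point is the measurability bookkeeping in passing from finitely many (or one) twin sets to the full twin partition: one must check that~$\widetilde{\mathcal A}$ is a genuine $\sigma$-algebra, that the conditional expectation $\widetilde W$ is well-defined on $\widetilde J$ (not merely on $J$), and that $\widetilde W^\varphi = W$ a.e. still holds for the full quotient. All of this is exactly what~\cite[Proposition 13.3]{lovasz2012large} provides, so the honest statement is that this proposition does the heavy lifting and our contribution is to route it through the Correspondence Theorem. I expect the write-up to be short: cite the construction, invoke Theorem~\ref{thm:mp_gives_isometry}, and identify the image. A secondary subtlety worth a sentence is that twins in the \emph{graphon} sense are defined by $W(x,\cdot)=W(y,\cdot)$ a.e., and one should note this relation is the same one under which $\widetilde W$ becomes twin-free, so iterating the quotient has no further effect — the quotient is genuinely twin-free.
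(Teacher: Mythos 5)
Your proposal matches the paper's proof: the paper's entire argument is ``Apply the Correspondence Theorem to the projection $J\to\widetilde J$,'' relying on the construction already spelled out in the first proof of Theorem~\ref{thm:twins_constant} (which in turn cites \cite[Proposition~13.3]{lovasz2012large}), and then implicitly identifying the image of $\varphi^*$ with the twin-sync subspace. You have simply written out the steps the paper leaves implicit; the route is the same and the argument is correct.
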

\begin{proof}
Apply the Correspondence Theorem~\ref{thm:mp_gives_isometry} to the
projection~$J\to \widetilde J$.
\end{proof}

This motivates the following definition:
\begin{definition}
We call~\eqref{eq:twin-free_core} the~\emph{twin-sync} subspace of~$\D(J,W)$.
\end{definition}

The twin-sync subspace is the subset in which any two twins share the same state.
Proposition~\ref{prop:twin_free_core} shows that dynamics on the twin-sync subspace
is the same as the dynamics on the twin-free quotient.

Recall that graphons isomorphic up to nullsets lead to isometric graphon dynamical systems
(Corollary~\ref{cor:imp_gives_isometry}) while, on the other hand, weakly isomorphic
graphons may have wildly different graphon dynamical systems (Example~\ref{ex:random_graph}).
The following result explains the exact role of weak isomorphism in dynamics.

It requires the technical hypothesis that the probability space~$J$ is standard.
This \davide{} however is not very restrictive,
as most spaces considered in practice are standard:
finite and countable discrete probability spaces,
the unit interval with Lebesgue measure,
any absolute continuous distribution on~$\R^n$,
spheres and tori with uniform probability,
the set of all continuous functions~$[0,\infty) \to \R$ with the Wiener measure.

\begin{corollary} \label{cor:weak_isom_twin_sync}
Suppose that~$J_1$ and~$J_2$ are standard probability spaces.
If the graphons~$(J_1,W_1)$ and~$(J_2,W_2)$ are weakly isomorphic,
then the twin-sync subspaces of~$\D(J_1,W_1)$~and~$\D(J_2,W_2)$
have isometric dynamics.
\end{corollary}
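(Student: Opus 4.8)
The plan is to reduce Corollary~\ref{cor:weak_isom_twin_sync} to Corollary~\ref{cor:imp_gives_isometry} by showing that, for standard probability spaces, the twin-free quotient is a complete weak-isomorphism invariant up to isomorphism modulo nullsets. Concretely, I would first invoke Proposition~\ref{prop:twin_free_core}: for $i=1,2$ the dynamics of $\D(J_i,W_i)$ restricted to its twin-sync subspace is isometric to the dynamics on the twin-free quotient $\D(\widetilde J_i, \widetilde W_i)$. Thus it suffices to prove that the two twin-free quotients $(\widetilde J_1, \widetilde W_1)$ and $(\widetilde J_2, \widetilde W_2)$ are isomorphic up to nullsets, for then Corollary~\ref{cor:imp_gives_isometry} gives the desired isometry between the twin-sync subspaces (composing the three isometries).

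The heart of the argument is therefore: if $(J_1,W_1)$ and $(J_2,W_2)$ are weakly isomorphic and $J_1, J_2$ are standard, then their twin-free quotients are isomorphic up to nullsets. I would argue in two steps. First, weak isomorphism is preserved under passing to the twin-free quotient: identifying twins does not change homomorphism densities, since $\widetilde W$ is a conditional expectation of $W$ and the quotient map is measure preserving with $\widetilde W^\varphi = W$ a.e.\ (this is exactly the content of \cite[Proposition 13.3]{lovasz2012large} used in the proof of Theorem~\ref{thm:twins_constant}); hence $t(F,\widetilde W_i) = t(F, W_i)$ and the quotients remain weakly isomorphic. Second, I would appeal to the structure theorem for weak isomorphism of twin-free graphons on standard spaces, \cite[Theorem 13.10 / Corollary 10.35]{lovasz2012large}: two twin-free graphons on standard (atomless or, more generally, Borel) probability spaces that are weakly isomorphic are in fact isomorphic up to nullsets, i.e.\ there is an invertible-mod-nullsets measure-preserving $\psi: \widetilde J_1 \to \widetilde J_2$ with $\widetilde W_2^\psi = \widetilde W_1$ a.e. The standardness hypothesis is exactly what is needed here, since on a standard probability space one can realize the abstract quotient as a Borel space and apply the pure-graphon-theory result; without it the twin-free quotient need not carry enough structure for the rigidity statement to hold.

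The main obstacle is the second step: invoking (and correctly citing) the graphon-theory fact that \emph{weak isomorphism plus twin-free plus standard implies isomorphic mod nullsets}. The subtlety is that Lovász's statements are typically phrased for graphons on $[0,1]$ or on "nice" spaces, and one must check that our twin-free quotient $\widetilde J_i$ — constructed abstractly as a quotient sigma-algebra — is standard whenever $J_i$ is, so that the theorem applies. This follows because a measurable quotient of a standard probability space by a sub-sigma-algebra is again standard (equivalently, one can choose a Borel realization), citing the same references on standard spaces already used in the paper \cite{rokhlin1949fundamental,haezendonck1973abstract,halmos1942operator}. Once the quotients are known to be standard and twin-free, the rigidity theorem delivers the mod-nullset isomorphism and Corollary~\ref{cor:imp_gives_isometry} finishes the proof. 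I would also remark in passing that this is the precise sense in which weak isomorphism "shows up" in dynamics: it is invisible on the full graphon dynamical system but becomes an isometry once restricted to the twin-sync subspace.
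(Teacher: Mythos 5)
Your proof follows the same strategy as the paper's: pass to the twin-free quotients (which remain weakly isomorphic), invoke the rigidity theorem for twin-free graphons on standard probability spaces to obtain an isomorphism up to nullsets, and conclude by combining Proposition~\ref{prop:twin_free_core} with Corollary~\ref{cor:imp_gives_isometry}. You usefully make explicit two points the paper leaves implicit — the precise role of Proposition~\ref{prop:twin_free_core} in identifying the twin-sync subspace dynamics with the quotient dynamics, and the check that the twin-free quotient of a standard space is again standard — but the underlying argument is the same.
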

\begin{proof}
Since the graphons~$(J_1,W_1)$ and~$(J_2,W_2)$ are weakly isomorphic
and every graphon is weakly isomorphic to its twin-free realization,
then the graphons~$(\widetilde J_1, \widetilde W_1)$ and~$(\widetilde J_2, \widetilde W_2)$
are also weakly isomorphic. Twin-free weakly isomorphic graphons on standard probability spaces
are isomorphic up to nullset~\cite[Theorem 13.9]{lovasz2012large}.
By Corollary~\ref{cor:imp_gives_isometry} graphons isomorphic up to nullset have
isometric dynamics.
\end{proof}

\begin{remark} \label{rem:comparing_sameness}
Roughly speaking, replacing a vertex by a set of twins and adjusting the measure accordingly
leads to an equivalent graphon in the sense of graph limit theory (indeed,
weakly isomorphic),
which supports a different dynamic unless twins start synchronized.
Corollary~\ref{cor:weak_isom_twin_sync} shows that this is essentially the only difference
between the notion of graphon equivalence in combinatorics
and in dynamics.
\end{remark}

\subsection{Identical all-to-all coupling: The constant graphon} \label{sec:constant_graphon}

Consider a constant graphon. In this case all vertices are twins.
This has an immediate consequence for dynamics:
By Theorem~\ref{thm:twins_constant}, for every positive-measure subset~$A$ the cluster
subspace~$\cluster(A)$ is dynamically invariant.
In this section we analyze the symmetry group of the graphon dynamical system and understand further consequences for the dynamics.

For the sake of example, we consider~$J=I$ and~$W=1$.
The automorphism group of the graphon~$(I,1)$ is the full group of measure preserving transformations~$\mp(I)$ of the unit interval~$I$, which has been
well studied as a topological group~\cite{fathi1978groupe, halmos2017lectures, nhu1990group}.
In our context it is interesting to remark that
every element of~$\mp(I)$ can be approximated by a permutation of a finite partition of~$I$
into intervals~\cite{kloeden1997constructing}
(although the typical element of~$\mp(I)$ is
not of this form~\cite{chaika2018typical}).

By any means, the group~$\mp(I)$ is large.
This leads to a large number of dynamically invariant subspaces (see Section~\ref{sec:InvSubsp}).
We describe some:

\begin{proposition} \label{prop:autom_constant}
Consider the graphon dynamical system~$\D(I,1)$.
The following subsets of~$L^1(I)$ are dynamically invariant:
\begin{enumerate} [(i)]
\item For every measurable subset~$A\subseteq I$,
	the cluster subspace~$\cluster(A)$;
	\label{prop:autom_constant:constant}
\item For every measurable subsets~$A_1,\ldots,A_n \subseteq I$,
the subspace \[ \bigcap_{k=1}^n \cluster(A_k) \] in which every~$A_k$ is cluster;
	\label{prop:autom_constant:partition}
\item The set of functions that are injective up to nullset;
	\label{prop:autom_constant:injective}
\item For every positive integer~$q$, the set of almost everywhere $(1/q)$-periodic functions;
	\label{prop:autom_constant:periodic}
\item The set of functions satisfying the identity~$u_x = u_{1-x}$ almost everywhere.
	\label{prop:autom_constant:even}
\end{enumerate}
\end{proposition}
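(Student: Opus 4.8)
The plan is to derive each of the five invariant subspaces by exhibiting it as either a fixed-point set $\fix(\varphi^*)$ of a graphon automorphism (via~\eqref{eq:invariant_fix}) or as the image $\varphi^*(L^1(J'))$ of a measure- and adjacency-preserving map (via~\eqref{eq:invariant_image}). The key point throughout is that for the constant graphon $(I,1)$ the adjacency condition $W^\varphi = W$ is automatic: \emph{every} measure-preserving self-map of $I$ preserves adjacency. So the entire problem reduces to producing, for each item, a suitable measure-preserving transformation of $I$ and identifying its fixed-point set or image. Items \eqref{prop:autom_constant:constant} and \eqref{prop:autom_constant:partition} are in fact already covered by Theorem~\ref{thm:twins_constant} applied to the constant graphon (all vertices are twins, so any $A$ is a set of twins, and finite intersections follow by iterating), but I would note that they also follow from the construction below.

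First I would handle \eqref{prop:autom_constant:constant} and \eqref{prop:autom_constant:even} together as fixed-point sets. For \eqref{prop:autom_constant:even}, the map $\varphi(x) = 1-x$ is an involutive measure-preserving transformation of $I$; since it preserves the constant adjacency trivially, it is a graphon automorphism, and $\fix(\varphi^*) = \{u \mid u_x = u_{1-x}\ \text{a.e.}\}$ is dynamically invariant by~\eqref{eq:invariant_fix}. For \eqref{prop:autom_constant:constant}: by the argument in the $J=I$ proof of Theorem~\ref{thm:twins_constant}, the set $A$ (normalized) is an atomless standard space hence carries an ergodic transformation $\gamma_A$; extend by the identity on $I\setminus A$ to get $\gamma \in \mp(I)$, which is a graphon automorphism, and $\fix(\gamma^*) = \cluster(A)$ by ergodicity~\cite[Thm.~1.6]{walters2000introduction}. (If $A$ has atoms one reduces to a finite permutation in the same way.)

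Next, for \eqref{prop:autom_constant:periodic} I would use the Koopman-image description~\eqref{eq:invariant_image} with the map $\varphi_q: I \to I$, $\varphi_q(x) = qx \ (\mathrm{mod}\ 1)$, which is measure-preserving and (trivially) adjacency-preserving, hence falls under the Correspondence Theorem with $J' = J = I$. Its pullback image $\varphi_q^*(L^1(I)) = \{v \circ \varphi_q \mid v \in L^1(I)\}$ is exactly the set of a.e.\ $(1/q)$-periodic functions, giving a closed dynamically invariant subspace. This is the prototypical non-invertible symmetry of Section~\ref{sec:InvSubsp}. For \eqref{prop:autom_constant:injective} I would argue directly: the set $S$ of functions injective up to nullset is invariant because the flow preserves it---concretely, the isometry $\varphi^*$ associated to any $\varphi\in\mp(I)$ restricts to a symmetry, but more to the point, injectivity up to nullset is preserved along trajectories since, writing the evolution~\eqref{eq:main:graphon} for $(I,1)$ as $\dot u_x = f(u_x, c(t))$ with $c(t) = \int_I g(u_x,u_y)\,\d\mu(y)$ depending only on $t$, the difference $u_x - u_{x'}$ satisfies a linear homogeneous ODE with a bounded (Lipschitz-in-state) coefficient, so it is nonzero for all $t$ whenever it is nonzero at $t=0$; Fubini then upgrades "nonzero for a.e.\ pair at each $t$" to "injective a.e.\ at each $t$''.

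The main obstacle is item \eqref{prop:autom_constant:injective}: unlike the others it is genuinely not the fixed set or Koopman image of a single transformation, so it needs the ad hoc ODE-comparison argument sketched above, and some care is required to make the "a.e.\ pair $(x,x')$'' statement precise (one works on $I\times I$ with the product measure, uses that $c(t)$ is independent of the spatial variable, and invokes uniqueness/Gronwall for the scalar equation governing $u_x(t)-u_{x'}(t)$). Items \eqref{prop:autom_constant:constant}, \eqref{prop:autom_constant:partition}, \eqref{prop:autom_constant:periodic}, \eqref{prop:autom_constant:even} are essentially immediate from the machinery already in place; \eqref{prop:autom_constant:partition} in particular is just the statement that an intersection of dynamically invariant sets is dynamically invariant, applied to the sets from \eqref{prop:autom_constant:constant}.
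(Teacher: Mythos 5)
Your treatment of items~(\emph{i}), (\emph{ii}), (\emph{iv}), (\emph{v}) matches the paper: (\emph{i}) is Theorem~\ref{thm:twins_constant} specialized to the constant graphon (and you reproduce the ergodic-transformation argument of its $J=I$ proof), (\emph{ii}) is closure under finite intersections, and for (\emph{iv}) and~(\emph{v}) you pick, respectively, the Koopman-image and the fixed-point-set description -- the paper in fact records \emph{both} proofs for each of these two items, so your choices are each one of the paper's two alternatives.

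Where you diverge is item~(\emph{iii}), and there you misdiagnose the situation. You claim this item ``is genuinely not the fixed set or Koopman image of a single transformation, so it needs the ad hoc ODE-comparison argument.'' The first half of that sentence is true, but the conclusion does not follow: the paper disposes of (\emph{iii}) in one line by noting that the collection of dynamically invariant subsets of a (two-sided) flow is closed under arbitrary intersections, unions, \emph{and complements}, and that the set of functions injective up to nullset is exactly $\bigcap_{A:\,|A|>0}\bigl(L^1(I)\setminus\cluster(A)\bigr)$, hence invariant by part~(\emph{i}). Your no-crossing argument is a legitimate alternative route -- it is essentially the canonical-invariant-region observation that the paper itself cites in Section~\ref{sec:constant_graphon} from~\cite{ashwin2016identical,ashwin1992dynamics} -- but it requires genuine extra work (pointwise versus $L^1$ representatives, joint measurability in $(x,t)$, a careful Fubini step to pass from ``distinct for a.e.\ pair'' to ``injective up to nullset''), none of which is needed for the Boolean argument.

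There is also a concrete error in how you set up the ODE argument. You write the evolution as $\dot u_x = f(u_x, c(t))$ with $c(t) = \int_I g(u_x,u_y)\,\d\mu(y)$ ``depending only on $t$,'' but that integral manifestly depends on $x$ through $u_x$ appearing inside $g$. What is actually true, and what makes no-crossing work, is that $\dot u_x = F(u_x,t)$ for a single nonautonomous, Lipschitz-in-the-first-argument function $F(a,t)=f\bigl(a,\int_I g(a,u_y(t))\,\d\mu(y)\bigr)$ that depends on the global state $u(t)$ but not directly on $x$; all coordinates then solve the same one-dimensional ODE and uniqueness forbids crossings. Your subsequent sentence about the difference $u_x - u_{x'}$ satisfying a linear homogeneous ODE with bounded coefficient is consistent with this corrected reading, so the slip is in the presentation rather than the idea -- but as written the premise ``$c(t)$ independent of $x$'' is false.
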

\begin{proof}
Part \eqref{prop:autom_constant:constant} is a particular case of
Theorem~\ref{thm:twins_constant}.
Part~\eqref{prop:autom_constant:partition} and
part \eqref{prop:autom_constant:injective} follow from part \eqref{prop:autom_constant:constant}
and the fact that the set of dynamically invariant
subsets is closed by intersection, union and complement.

We prove \eqref{prop:autom_constant:periodic} in two ways.
The transformation~$\varphi: x\mapsto x+1/q\, (\mod 1)$ is
invertible and measure preserving, thus a symmetry
of the dynamical system by Corollary~\ref{cor:imp_gives_isometry}.
The fixed point set~$\fix(\varphi^*) \subseteq L^1(I)$
is the set of $(1/q)$-periodic functions.
Alternatively, consider the non-invertible
measure preserving transformation~$\varphi: x\mapsto qx\, (\mod 1)$
and consider the image of the dual map~$\varphi^*$.

We prove \eqref{prop:autom_constant:even} in two ways.
The transformation~$\varphi^*: x\mapsto 1-x$ is invertible and measure preserving, thus a symmetry
of the dynamical system by Corollary~\ref{cor:imp_gives_isometry}.
The fixed point set~$\fix(\varphi^*) \subseteq L^1(I)$
is the set of functions satisfying~$u_x = u_{1-x}$ for almost every~$x\in I$.
Alternatively, consider the non-invertible
measure preserving transformation~$\varphi$
mapping~$x\mapsto 2x$ if~$x\leq 1/2$ and $x\mapsto 1-2x$ if~$x>1/2$
and consider the image of the dual map~$\varphi^*$.
\end{proof}

By taking~$A=I$ in Proposition~\ref{prop:autom_constant}~\eqref{prop:autom_constant:constant}
we obtain the twin-synch subspace of the system: Since all vertices are twins, dynamics reduces
to the $1$-dimensional ordinary differential equation
\[
	\dot u = f(u, g(u,u)), \qquad u\in \R.
\]
This describes the dynamics of the system as one giant cluster.

Proposition~\ref{prop:autom_constant}~\eqref{prop:autom_constant:partition}
leads to the finite-dimensional system
\[
	\dot u_j = f \p{u_j, \sum_{y=1}^n g(u_j, u_k) \mu_k}, \qquad j=1,\ldots,n
\]
where~$1,\ldots,n$ are the labels of the subsets,~$(\mu_k)_k$ their measure and~$u_j$
the state of the cluster.
This describes multi-population cluster dynamics.
Notice that if~$\mu_1=\cdots=\mu_n$ then we obtain the graph dynamical system on a complete graph~$K_n$, and that
there are uncountably many ways of partitioning~$I$ into~$n$ sets of measure~$1/n$,
thus uncountably many copies of~$\D(K_n)$ in~$\D(I, 1)$.

Proposition~\ref{prop:autom_constant}~\eqref{prop:autom_constant:periodic} leads to
\[
	\dot u_x = f \p{u_x, q \int_0^{1/q} g(u_x, u_y) \d y}, \qquad x\in [0,1/q].
\]
By substituting~$q\d y = \d \mu (y)$, the normalized Lebesgue measure on~$[0,1/q]$,
we see that this system is isometric to the full system~$\D(I,1)$.
This shows that the dynamical system~$\D(I,1)$ contains infinitely many copies of itself.

\subsubsection{Canonical invariant region}
Proposition~\ref{prop:autom_constant} part~\eqref{prop:autom_constant:injective}
generalizes an important property known for systems
on complete graphs~\cite{ashwin2016identical, ashwin1992dynamics}.
Since in a complete graph any two vertices are twins, the states of two vertices can never
cross each other over time. In particular, if the initial condition~$k\mapsto u_k(0)$ is injective,
then~$k\mapsto u_k(t)$ is injective for every~$t$.
As a consequence, one can restrict dynamics to the \emph{canonical invariant region}
defined by~$u_1 < u_2 < \ldots < u_n$, see~\cite{ashwin2016identical, ashwin1992dynamics}.
Proposition~\ref{prop:autom_constant} part~\eqref{prop:autom_constant:injective}
shows that, in the context of graphons, injectivity is preserved up to nullset.

\subsubsection{Large random graphs}
For each positive integer~$n$ consider an \ER random graph with~$n$ vertices.
The probability that an \ER random graph has a non-trivial automorphism
goes to~$0$ as~$n$ goes to infinity~\cite[Corollary~2.3.3]{godsil2001algebraic}.
On the other hand, with probability~$1$ the graph sequence converges
to the constant graphon~$(I, 1/2)$, which has a large group of symmetries.
Therefore, although the system on~$n$ vertices is non-symmetric,
one can expect dynamics to resemble the symmetries of the limit for~$n$ large.
In Section~\ref{sec:approximate_symmetries} we will prove that this is indeed the case.


\subsection{Coupled populations: Graphons with block structure} \label{sec:block_structure}

Consider a partition of~$J$ into finitely many measurable subsets~$J_1,\ldots,J_n$.
In this section we suppose that the coupling between any two vertices~$x\in J_j$ and~$y\in J_k$
depends only on~$j$ and~$k$, that is, for every~$k=1,\ldots,n$ the vertices
in each~$J_k$ are all twins.
This assumption models a family of~$n$ populations
with homogeneous coupling within a population but not necessarily among the populations.
The case~$n=1$ is covered in Section~\ref{sec:constant_graphon}.

For the sake of example, we consider~$J=I$.
Fix~$n>1$ and define~$I_k = [k-1/n, k/n]$ for~$k=1,\ldots,n$.
Let~$G$ be a graph with~$n$ vertices and let~$G_{j,k}$ denote the adjacency matrix.
Dynamics on~$G$ is given by
\begin{equation} \label{eq:blocks:finite}
	\dot u_k = f\p{u_k, \frac{1}{n} \sum_j G_{j,k} g(u_j, u_k)}.
\end{equation}
We now define a graphon~$W_G$ on~$I$ by coupling the partition intervals according to~$G$:
\begin{equation} \label{eq:blocks}
	W_G = \sum_{j,k} G_{j,k} \1_{I_j\times I_k},
\end{equation}
see Figure~\ref{fig:canonical_embedding}.
Dynamics on~$W_G$ are given by
\begin{equation} \label{eq:blocks:infinite}
	\dot u_x = f\p{u_k, \sum_{j} G(j,k) \int_{I_j} 
		g(u_x, u_y) \d y}, \qquad \text{if } x\in I_k.
\end{equation}
The system~\eqref{eq:blocks:finite} can be obtained from~\eqref{eq:blocks:infinite}
by restricting dynamics to the subspace~$\bigcap_{k=1}^n \cluster(I_k)$
in which each~$I_k$ is cluster: 
One can show this directly from the equations or
by applying the Correspondence Theorem~\ref{thm:mp_gives_isometry}
to the function~$I \to \{1,\ldots,n\}$ mapping each point~$x\in I_k$ to the label~$k$
of the interval it belongs to. 

The graph-induced symmetries of~\eqref{eq:blocks:finite} are the automorphism of~$G$.
These extend naturally to graphon-induced symmetries as permutations of the partition intervals.
However the graphon system~\eqref{eq:blocks:infinite} has additional symmetries.
Indeed, every interval~$I_k$ supports its own group of invertible measure preserving transformations~$\mp(I_k)$.
Notice that the groups~$\mp(I_k)$ are all isomorphic to~$\mp(I)$ as groups (but the isomorphism does not preserve the measure).
Since~$\Aut(G)$ acts on~$\prod_k \mp(I_k)$ by exchanging the intervals, this formally gives the symmetry group the structure of the wreath product
\[
	\prod_k \mp(I_k) \rtimes \Aut(G) \cong \mp(I) \wr \Aut(G).
\]
We will see in Proposition~\ref{prop:block_structure_group} that,
if the adjacency matrix of~$G$ is invertible, then this is indeed the full
automorphism group of~$W_G$. In general the group can be larger.
Graphs with invertible adjacency matrix are analyzed in~\cite{sciriha2007characterization}.
If~$G$ contains twins, say~$j$ and~$k$, then the adjacency matrix is not invertible
and the automorphism is allowed to exchange mass between the intervals~$I_j$ and~$I_k$.

\begin{proposition} \label{prop:block_structure_group}
Let~$G$ be a graph with invertible adjacency matrix.
Then the automorphism group of $W_G$ is
\[
	\Aut(W_G) \cong \prod_k \mp(I_k) \rtimes \Aut(G).
\]
\end{proposition}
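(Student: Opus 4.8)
We need to show that, for a graph $G$ whose adjacency matrix is invertible, every graphon automorphism of $W_G$ preserves the block partition $I_1,\dots,I_n$, and hence is of the claimed form. One inclusion is already established: the wreath product $\prod_k \mp(I_k) \rtimes \Aut(G)$ acts by automorphisms, since an element of $\mp(I_k)$ permutes mass inside $I_k$ without changing the (constant) values $G_{j,k}$ on the blocks, and a graph automorphism of $G$ permutes the blocks compatibly. So the content is the reverse inclusion: $\Aut(W_G) \subseteq \prod_k \mp(I_k) \rtimes \Aut(G)$.

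\medskip

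\textbf{Step 1: degree profiles separate the blocks.} For $x \in I$ define the ``profile'' function $p_x : I \to \R$ by $p_x(y) = W_G(x,y)$. By construction $p_x$ depends only on the block $I_k$ containing $x$: if $x \in I_k$ then $p_x = \sum_j G_{j,k}\1_{I_j}$ almost everywhere. Thus the profile takes only $n$ distinct values (as elements of $L^1(I)$), one per block. The invertibility of the adjacency matrix is exactly what guarantees these $n$ profiles are pairwise distinct: the vector of block-values of $p_x$ for $x\in I_k$ is the $k$-th column of $G$, and distinct columns of an invertible matrix are distinct. (In fact invertibility gives more than distinctness, and we only need distinctness here; this is why the proposition notes that for non-invertible $G$ the group can be larger but the inclusion-type argument still identifies the block structure up to merging twin blocks.) Consequently the partition $\{I_1,\dots,I_n\}$ is, up to nullsets, intrinsically defined by $W_G$: two points $x,x'$ lie in the same block iff they are twins, and the blocks are the (finitely many) twin classes of positive measure.

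\medskip

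\textbf{Step 2: automorphisms permute the blocks.} Let $\varphi \in \Aut(W_G)$. By axiom (A2), for each fixed $x$ we have $W_G(\varphi(x),\varphi(y)) = W_G(x,y)$ for a.e. $y$; since $\varphi$ is measure preserving, substituting $y \mapsto \varphi^{-1}(y)$ (valid because $\varphi$ is invertible and measure preserving, so $\varphi^{-1}$ is too) gives $W_G(\varphi(x), y) = W_G(x, \varphi^{-1}(y))$ for a.e. $y$, i.e. $p_{\varphi(x)} = p_x \circ \varphi^{-1}$ as elements of $L^1(I)$. Because $\varphi^{-1}$ is measure preserving, composition with $\varphi^{-1}$ permutes the finite list of block-indicator combinations: more precisely, if $p_x = \sum_j G_{j,k}\1_{I_j}$ then $p_x\circ\varphi^{-1} = \sum_j G_{j,k}\1_{\varphi(I_j)}$. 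The point is that $p_{\varphi(x)}$ must itself be one of the $n$ admissible profiles $\sum_j G_{j,\ell}\1_{I_j}$. Comparing, and using Step 1 (the $\1_{I_j}$ are linearly independent in $L^1$ and the block-value vectors, i.e. columns of $G$, are distinct), one deduces that $\varphi$ maps $I_k$ into a single block $I_{\sigma(k)}$ up to a nullset, for some permutation $\sigma$ of $\{1,\dots,n\}$; measure preservation forces each $I_k$ to have the same measure $1/n$ as $I_{\sigma(k)}$, consistent with $\sigma$ being a bijection. Moreover the relation $G_{j,k} = G_{\sigma(j),\sigma(k)}$ falls out of matching profiles, so $\sigma \in \Aut(G)$.

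\medskip

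\textbf{Step 3: assembling the wreath product.} Having shown $\varphi(I_k) = I_{\sigma(k)}$ up to nullsets with $\sigma\in\Aut(G)$, write $\varphi = \beta \circ \alpha$ where $\alpha$ is the block permutation realizing $\sigma$ (rescaling each interval affinely onto its image, which is measure preserving since all blocks have equal length) and $\beta = \varphi\circ\alpha^{-1}$ now fixes each block $I_k$ setwise up to nullsets. Restricted to $I_k$, the map $\beta$ is an invertible measure-preserving transformation of $I_k$, i.e. $\beta\restricted{I_k} \in \mp(I_k)$; modifying $\beta$ on the nullsets where it misbehaves does not change its class and does not affect (A2), since $W_G$ is constant on each block pair. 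Hence $\varphi$ lies in the image of $\prod_k \mp(I_k) \rtimes \Aut(G)$, and combined with the already-noted reverse inclusion and the fact that the action is faithful (an element acting trivially on $L^1$ acts trivially on indicators of subsets of each $I_k$, forcing it to be the identity in $\mp(I_k)$, and trivial on blocks forces $\sigma = \id$), we get the claimed group isomorphism.

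\medskip

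\textbf{Main obstacle.} The delicate point is Step 2: turning the ``for every $x$, for almost every $y$'' form of (A2) into a clean statement that $\varphi$ respects the blocks \emph{up to nullsets}, without the nullsets depending badly on $x$. The profile-function viewpoint handles this, because passing to $p_x$ as an element of $L^1(I)$ absorbs the $y$-nullset, and there are only finitely many profiles, so one is comparing finitely many $L^1$-functions rather than making a pointwise argument; invertibility of $G$ is then invoked precisely to guarantee these finitely many profiles are genuinely distinct, pinning down $\sigma$ uniquely. The remaining bookkeeping (affine rescalings, faithfulness, the wreath-product group law matching the composition $\varphi = \beta\circ\alpha$) is routine.
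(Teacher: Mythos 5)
Your proof is correct, but takes a genuinely different route from the paper's. The paper constructs a doubly stochastic ``mass-transfer'' matrix $P$ with entries $p_{kj}=n\abs{I_k\cap\varphi^{-1}(I_j)}$, extracts from (A2) the relations $G(p,\cdot)=PG(q,\cdot)$ and $P^{\textsf{T}}G(p,\cdot)=G(q,\cdot)$, and then invokes invertibility of $G$ to force $PP^{\textsf{T}}=I$, so that $P$, being doubly stochastic and orthogonal, must be a permutation matrix. You instead compare the degree profiles $p_x = W_G(x,\cdot)\in L^1(I)$: these are constant on blocks, pairwise distinct exactly when the columns of $G$ are distinct (i.e.\ $G$ is twin-free), and satisfy $p_{\varphi(x)} = p_x\circ\varphi^{-1}$; since the right-hand side is fixed as $x$ ranges over $I_k$, distinctness of the profiles pins the block of $\varphi(x)$ down uniquely, uniformly over $x\in I_k$. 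This is cleaner (no matrix calculus) and proves slightly more: your argument actually only needs the columns of $G$ to be pairwise distinct (twin-freeness), which is strictly weaker than invertibility---e.g.\ the path $P_5$ is twin-free with singular adjacency matrix, and your proof still goes through, whereas the paper's $PP^{\textsf{T}}G=G$ step does not conclude there. One phrasing nit: ``invertibility is exactly what guarantees these $n$ profiles are pairwise distinct'' should read ``invertibility implies distinctness''; as your own parenthetical acknowledges, distinctness is the strictly weaker property you actually use.
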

\begin{proof}
It remains to be shown that every graphon automorphism is a composition
of an interval permutation (corresponding to an automorphism of~$G$)
and measure preserving transformations of the intervals into themselves.
To show this, observe that
in general a measure preserving transformation of~$I$ transfer\davide{s} mass between the
intervals~$(I_k)_k$. This transfer is represented by a Markov chain.
More precisely, we will obtain a double stochastic matrix~$P$.
Under the stronger assumption that the transformation is an automorphism
of~$W_G$, we will show that all the entries of~$P$ are either~$0$ or~$1$.
That is, the transformation preserves the partition.

Let~$(G(p,k))_{p,k}$ denote the adjacency matrix associated to~$G$.
Fix~$x$. By definition we have~$W(\varphi(x),\varphi(y)) = W(x,y)$ for almost every~$y$.
Suppose that~$x\in I_p$ and~$\varphi(x)\in I_q$.
By definition we obtain:
\[
	G(p,k) \1_{I_k \cap \varphi^{-1} (I_j)}(y)
		=
	G(q,j) \1_{I_k \cap \varphi^{-1} (I_j)}(y).
\]
Define~$p_{kj} = \abs{I_k \cap \varphi^{-1} (I_j)}/(1/n)$ and the matrix~$P=(p_{kj})_{k,j}$.
Notice that~$P$ is double stochastic. Moreover, we have
\[
	G(p,\cdot) = P G(q,\cdot)
\]
and
\[
	P^\textsf{T} G(p,\cdot) = G(q,\cdot).
\]
It follows that~$P P^\textsf{T}$ and~$P^\textsf{T} P$ act identically on the
rows of the adjacency matrix of~$G$.
Since the adjacency matrix~$G$ is non-singular it follows
that~$P P^\textsf{T} = P^\textsf{T} P$ is equal to the identity matrix.
We conclude that that~$P$ is a permutation matrix.

This shows that~$\varphi$ preserves the partition. Therefore it must be the composition of
a permutation of the set of intervals and measure preserving transformations within each interval.
\end{proof}

It is interesting to look at the dynamics on the invariant subspaces.
Since each group~$\mp(I_k)$ contains an ergodic transformation,
dynamics on the fixed point set~$\fix(\prod_k \mp(I_k))$
is the same as the graph dynamics~$\D(G)$.

On the other hand, let us write~$j \sim k $ if and only if~$j,k\in \{1,\ldots,n\}$
belong to the same~$\Aut(G)$-orbit.
Then the fixed point set of~$\Aut(G)$ is the subset of~$L^1(I)$ of functions that repeat the same
values on every intervals of the same orbit:
\[
	\fix(\Aut(G)) = \ppp{u\in L^1(I) \bigm| u_{\frac{j}{n}+x} = u_{\frac{k}{n}+x},
		\, \forall x\in [0,1/n], \, \forall j\sim k}.
\]
In particular, if~$\Aut(G)$ is transitive then the fixed point set is the subset of
$(1/n)$-periodic functions and dynamics reduces to
\[
	\dot u_x  = f\p{ u_x, \deg(G) \int_0^{1/n} g(u_x, u_y) \d y}, \qquad x\in [0,1/n],
\]
where~$\deg(G)$ is the degree common to every vertex of~$G$.
Up to isomorphism this is the same as dynamics on the constant graphon~$\D(I, \deg(G)/n)$.
Intuitively, this means that if~$\Aut(G)$ is transitive and all the populations start
with the same initial conditions,
then they evolve as just one population.


\section{Networks with distance-dependent coupling} 
\label{sec:geodesic_coupling}

In this section we consider network dynamical systems that arise if the dynamical units are placed on a manifold and coupling depends on the distance of the units on the manifold. We refer to such a framework as~\emph{geodesic coupling}.
The resulting graphons are known are \emph{geodesic graphons}
or~\emph{geometric graphons}~\cite{delmas2020infinite}.
This setup will lead to specific graphon dynamical systems where the graphon~$(J,W)$ reflects the ``geometry'' of the manifold.

Graphon dynamical systems with geodesic coupling include a wide range of relevant systems.
The classic example is distance-dependent coupling on a circle: Units are indexed by $J=\T$ (equipped with the standard metric and the uniform measure) and two units $u_x$~and~$u_y$, where $x,y\in\T$, interact if and only if their distance~$\d(x,y)$ is less or equal than some fixed constant~$\delta$.
Note that if~$\delta=\pi$ the circle~$\T$ is all-to-all connected and if~$\delta<\pi$ the neighborhood of a point~$x$ is an arc of length~$2\delta$.
Such distance-dependent interactions has been considered, for example, in~\cite{Chiba2016, medvedev2014small} as the continuum limit of corresponding families of graphs ~\cite{lu2004characterizing, wiley2006size}.
Indeed, geodesic coupling on the $2$-dimensional and $3$-dimensional torus has been extensively considered in the analysis of chimera states~\cite{panaggio2013chimera, omel2019chimerapedia, maistrenko2015chimera}.
Geodesic coupling on the $2$-dimensional sphere~$S^2$ has been considered in graph limit theory~\cite{lovasz2012large}, see Example~13.2 and Example~13.16.

Graphon dynamical systems with geodesic coupling come with a natural choice for the index space~$J$---the manifold over which the coupling is defined. 
With this index space, the kernel~$W$ will have a simple form determined by the coupling, reflecting any symmetries the space~$J$ may have.
Thus, while fixing $J=I$ is sufficient in graph limit theory~\cite{lovasz2012large},
for graphon dynamical systems with geodesic coupling it can obscure geometric (symmetry) properties---especially if the topological structure has dimension larger than~$1$.

As an example, consider a geodesic coupling graphon on~$J=\T^2$.
The probability spaces~$I$ and~$\T^2$ are isomorphic up to nullset. An explicit
isomorphism is the measure preserving transformation $I\to\T^2$
that separates odd and even digits of the binary expansion.
Representing the graphon on~$J=I$, however,
hides the symmetries, see Figure~\ref{fig:wrong_space}.

\begin{figure}[h]
\centering
\includegraphics[width=0.5\textwidth]{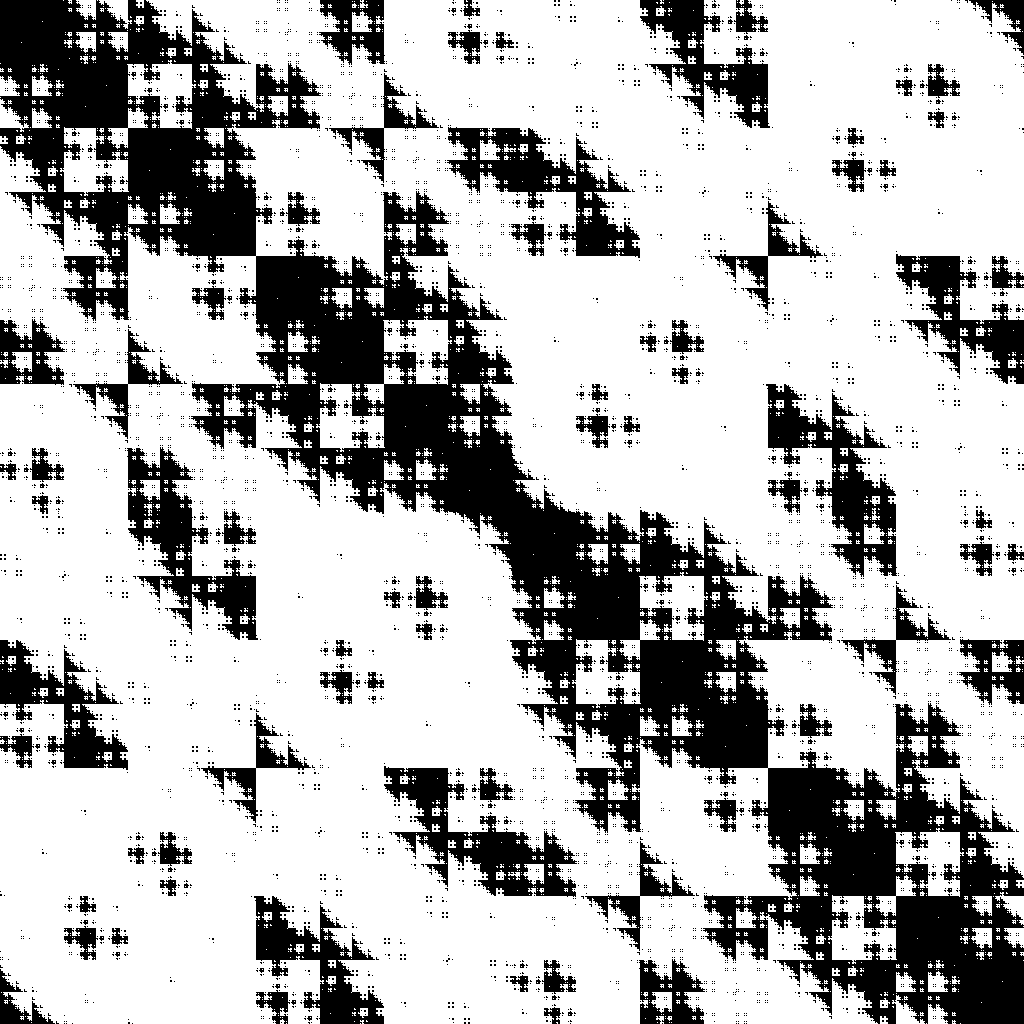}.
\caption{
Representing a naturally $2$-dimensional index space on the $1$-dimensional
unit interval can create complicated, fractal-like pictures,
which hide the underlying graphon symmetries.
The graphon in this figure has automorphism group~$\Dih_4 \ltimes \T^2$.
}
\label{fig:wrong_space}
\end{figure}

This section is organized as follows: First, we frame geodesic coupling in the context of graphons;
second, we apply the graphon formalism in order to understand symmetries;
third, we present numerical simulations for the $2$-torus and the $2$-sphere.
As a side product of our analysis we are able to explicitly compute the automorphism group
of several geodesic graphons.

\subsection{Geodesic graphons}
In order to frame geodesic coupling in the context of graphons, we require
the notion of Hausdorff measure.
Let~$J$ be a compact, connected $d$-dimensional manifold endowed with metric~$\d$
and $d$-dimensional Hausdorff probability measure~$\mu$.
If~$J$ is the unit circle~$\T$, the distance~$\d(x,y) = \measuredangle (x,y)$
is equal to the \davide{smaller} angle (in radians) between~$x$ and~$y$
while the measure~$\mu$ coincides with normalized Lebesgue measure inherited
from the interval~$[0,2\pi]$.
More generally, if~$J$ is the $d$-dimensional torus~$\T^d = \R^d/\Z^d$, metric and
measure are inherited from~$\R^d$.
If~$J$ is the unit $2$-dimensional sphere~$\S^2$, the distance~$\d$ is the arc length and~$\mu$
is the spherical measure.
The Hausdorff measure is defined from the metric.
A consequence of this fact is that any isometry of the manifold is measure preserving.

Fix~$\delta>0$. The \emph{geodesic graphon} $W_\delta: J\times J\to \{0,1\}$ is defined as
follows:
\[
	W_\delta(x,y) =
	\begin{cases}
		1, \quad \text{if $\d(x,y)\leq \delta$} \\
		0, \quad \text{otherwise}.
	\end{cases}
\]
Notice that~$W_\delta$ is a graphon:
From the definition of Hausdorff measure it follows that~$W_\delta$ is $\mu$-measurable
and clearly~$W_\delta(x,y)=W_\delta(y,x)$.
The neighborhood of the vertex~$x\in J$, in the sense of graphon adjacency,
is the ball of radius~$\delta$ centered at~$x$.


\subsection{The automorphism group of geodesic graphons}
Let~$W_\delta$ be a geodesic graphon.
An isometry is a bijection~$J\to J$ preserving the metric~$\d$.
By definition it follows that both~$\mu$ and~$W_\delta$ are preserved by isometry.
In particular any isometry of~$J$ is a graphon automorphism of~$W_\delta$:

\begin{proposition} \label{prop:iso_in_auto}
Let~$\Iso(J)$ be the isometry group of~$J$.
For every~$\delta\geq 0$ the inclusion~$\Iso(J)\subseteq \Aut(W_\delta)$ holds.
\end{proposition}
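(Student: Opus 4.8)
The plan is to verify directly that every $\gamma \in \Iso(J)$ satisfies the two defining conditions (A1) and (A2) of a graphon automorphism from Definition~\ref{def:graphon:automorphism}. Note first that $\gamma$ is automatically a bijection $J \to J$ by the definition of isometry. For (A1), I would invoke the remark made just before the proposition: the Hausdorff measure $\mu$ is constructed purely from the metric $\d$, so any map preserving $\d$ also preserves $\mu$; hence $\gamma$ is measure preserving. (If one wants to spell this out, a bijective isometry sends a set $E$ of Hausdorff diameter $r$ to a set $\gamma(E)$ of the same diameter, so the outer-measure sums defining the $d$-dimensional Hausdorff measure are literally the same for $E$ and $\gamma(E)$; normalization then gives $\gamma\#\mu = \mu$.)

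For (A2), the key observation is that $W_\delta(x,y)$ depends on $x,y$ only through the quantity $\d(x,y)$, and an isometry preserves that quantity exactly: $\d(\gamma(x),\gamma(y)) = \d(x,y)$ for all $x,y \in J$. Therefore $W_\delta(\gamma(x),\gamma(y)) = W_\delta(x,y)$ holds for \emph{every} $x$ and \emph{every} $y$, which is certainly stronger than the "for every $x$, for almost every $y$" required by (A2). Combining this with (A1) shows $\gamma \in \Aut(W_\delta)$, giving the inclusion $\Iso(J) \subseteq \Aut(W_\delta)$. One should also remark that $\Iso(J)$ is a subgroup: it is closed under composition and inversion (the inverse of a bijective isometry is an isometry), so the inclusion is an inclusion of groups, consistent with $\Aut(W_\delta)$ being a group.

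I do not anticipate a genuine obstacle here: the statement is essentially a bookkeeping check that "isometry $\Rightarrow$ measure preserving $+$ adjacency preserving," and both implications are immediate from the fact that both $\mu$ and $W_\delta$ are built out of $\d$. The only point requiring a word of care is the claim that Hausdorff measure is isometry-invariant; if the paper is willing to take "the Hausdorff measure is defined from the metric" as already granting this (as the sentence preceding the proposition suggests), then the proof is two lines. Otherwise the mild expansion above via diameters of covering sets suffices, and no further machinery (e.g.\ no appeal to uniqueness of Haar measure or to the Riemannian volume form) is needed.
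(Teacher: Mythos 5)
Your proof is correct and follows the paper's own reasoning: the paper also notes that Hausdorff measure is built from the metric (hence isometry-invariant, giving (A1)) and that $W_\delta$ depends only on $\d(x,y)$ (giving (A2) pointwise, which is stronger than required). The paper treats the proposition as an immediate consequence of the two sentences preceding it and gives no separate proof block; your write-up simply makes the same two checks explicit.
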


In Theorem~\ref{thm:iso_is_auto} we will prove that, for a certain class of manifolds,
isometries are the only graphon automorphisms.
This allows us to compute the automorphism group~$\Aut(W_\delta)$
for~$J=\T^d$ and~$J=\S^d$, covering the cases considered in literature.

For the proof of Theorem~\ref{thm:iso_is_auto} to work, we need the following assumption:
The volume of the intersection of two geodesic balls depends only
on the distance between the centers and the radii.
Spaces with this property have been characterized in~\cite{csikos2011volume}.
It turns out that under mild hypothesis they are the same as harmonic spaces.
See~\cite{berndt1995generalized} for equivalent definitions of harmonic space.
Spheres and tori are harmonic spaces.

\begin{theorem} \label{thm:iso_is_auto}
Suppose that~$J$ has the property that
the volume of the intersection of two geodesic balls depends only
on the distance between the centers and the radii.
Then for every~$\delta>0$ small enough
the identity~$\Iso(J)=\Aut(W_\delta)$ holds.
\end{theorem}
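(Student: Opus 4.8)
The plan is to show that an arbitrary graphon automorphism $\varphi$ of $W_\delta$ is an isometry of $J$; by Proposition~\ref{prop:iso_in_auto} the reverse inclusion is already known. The key geometric input is the hypothesis that the volume $V(r_1, r_2, t)$ of the intersection of two geodesic balls of radii $r_1, r_2$ whose centers are at distance $t$ depends only on $r_1, r_2, t$. Observe first that for a graphon automorphism, condition (A2) says that for each fixed $x$, the sets $\{y : W_\delta(x,y) = 1\}$ and $\{y : W_\delta(\varphi(x), \varphi(y)) = 1\}$ agree up to a nullset; since $\varphi$ is measure preserving, this means $\varphi$ carries the ball $B(x,\delta)$ to the ball $B(\varphi(x),\delta)$ up to a nullset. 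The strategy is to recover the distance $d(x,y)$ between two points from the measure $\mu\bigl(B(x,\delta) \cap B(y,\delta)\bigr)$, which $\varphi$ preserves.

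Concretely, I would proceed as follows. First, fix $\delta$ small enough that, on the compact harmonic manifold $J$, the function $t \mapsto V(\delta,\delta,t)$ is strictly monotone decreasing on the relevant range of distances $t \in [0, \operatorname{diam}(J)]$ (or at least on $[0, 2\delta]$, with the intersection volume being zero beyond $2\delta$); this is where ``$\delta$ small enough'' is used, and it should follow from the explicit form of $V$ on space forms, or more generally from the fact that on a harmonic space the ball-intersection volume is a genuine function of the distance that is nonconstant near $0$. Second, for a graphon automorphism $\varphi$ and points $x, y \in J$, note that $\varphi^{-1}(B(\varphi(x),\delta)) = B(x,\delta)$ up to a nullset, hence $\mu(B(x,\delta) \cap B(y,\delta)) = \mu(\varphi^{-1}(B(\varphi(x),\delta)) \cap \varphi^{-1}(B(\varphi(y),\delta))) = \mu(B(\varphi(x),\delta) \cap B(\varphi(y),\delta))$, using that $\varphi$ is measure preserving. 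By the harmonicity hypothesis this reads $V(\delta,\delta,d(x,y)) = V(\delta,\delta,d(\varphi(x),\varphi(y)))$, and by the monotonicity from the first step we conclude $d(\varphi(x),\varphi(y)) = d(x,y)$ for all $x,y$. Thus $\varphi$ is a distance-preserving bijection, i.e.\ an isometry, which proves $\Aut(W_\delta) \subseteq \Iso(J)$.

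One technical subtlety I would be careful about: condition (A2) is ``for every $x$, for almost every $y$,'' so the identity $\varphi^{-1}(B(\varphi(x),\delta)) = B(x,\delta)$ holds only up to a nullset that may depend on $x$, and the resulting identity of intersection measures holds for \emph{every} pair $(x,y)$ only after one checks that a nullset ambiguity does not affect the value of $\mu(B(x,\delta)\cap B(y,\delta))$---which it does not, since we are equating measures of sets, not the sets themselves. A cleaner route is: for fixed $x$, $\mathbf 1_{B(x,\delta)}(y) = \mathbf 1_{B(\varphi(x),\delta)}(\varphi(y))$ for a.e.\ $y$; multiply by $\mathbf 1_{B(z,\delta)}(y)$ and integrate in $y$, then use the change-of-variables formula for the measure-preserving $\varphi$ on the right-hand side to turn it into $\mu(B(\varphi(x),\delta)\cap B(\varphi(z),\delta))$ after first replacing $\mathbf 1_{B(z,\delta)}(y)$ by $\mathbf 1_{B(\varphi(z),\delta)}(\varphi(y))$ (valid for a.e.\ $y$ by applying (A2) with base point $z$). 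The main obstacle, then, is not the measure-theoretic bookkeeping but establishing the strict monotonicity of $t \mapsto V(\delta,\delta,t)$ for small $\delta$ on the class of spaces under consideration; for tori and spheres this is a direct computation, and for general harmonic spaces it should be extractable from the characterization in~\cite{csikos2011volume}, but it is the step that genuinely requires the geometric hypothesis and the smallness of $\delta$.
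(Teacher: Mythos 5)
Your approach matches the paper's for the first half: both convert condition (A2) plus measure-preservation into the identity $\mu\bigl(B^\delta_x \cap B^\delta_y\bigr) = \mu\bigl(B^\delta_{\varphi(x)} \cap B^\delta_{\varphi(y)}\bigr)$, and both invoke the harmonicity hypothesis so that $m^\delta(\lambda) := V(\delta,\delta,\lambda)$ is strictly decreasing on $[0,2\delta]$, which recovers $d(\varphi(x),\varphi(y))=d(x,y)$ whenever $d(x,y)\le 2\delta$. Your handling of the ``for every $x$, almost every $y$'' quantifier in (A2) via indicator functions and the change-of-variables formula is actually tidier than the paper's phrasing.

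There is, however, a genuine gap in the conclusion. The function $t\mapsto V(\delta,\delta,t)$ cannot be strictly monotone on all of $[0,\operatorname{diam}(J)]$: it is identically zero for $t>2\delta$, which your parenthetical acknowledges. But then the claim ``we conclude $d(\varphi(x),\varphi(y))=d(x,y)$ for all $x,y$'' does not follow. For pairs with $d(x,y)>2\delta$, the equality $V(\delta,\delta,d(x,y))=V(\delta,\delta,d(\varphi(x),\varphi(y)))$ only tells you $d(\varphi(x),\varphi(y))>2\delta$ as well, not that the distances agree. What you have established is that $\varphi$ is a \emph{local} isometry (distance-preserving at scale $\le 2\delta$). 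The paper closes this gap by an extra argument: subdivide a geodesic from $x$ to $y$ into segments of length $<2\delta$, apply the local isometry segment by segment, and use the triangle inequality to conclude that $\varphi$ is non-expansive globally; then cite the theorem that a non-expansive local isometry of a connected compact space into itself is automatically an isometry. Without some version of this local-to-global step, your proposal does not reach the stated conclusion.
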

\begin{proof}
We need to prove that every graphon automorphism~$\varphi$ is an isometry.
Let~$B^\delta_x$ denote the geodesic ball of center~$x$ and radius~$\delta$.
\davide{For every~$x,y\in S^1$ we have}
\[
	B^\delta_x \cap B^\delta_y =
		\varphi^{-1} \p{\varphi\p{B^\delta_x} \cap \varphi\p{B^\delta_y}}.
\]
Since~$\varphi$ preserves neighbors we have
\[
	B^\delta_x \cap B^\delta_y =
		\varphi^{-1} \p{B^\delta_{\varphi(x)} \cap B^\delta_{\varphi(y)}}.
\]
Moreover, since~$\varphi$ preserves the measure we have
\[
	\mu\p{B^\delta_x \cap B^\delta_y} = \mu \p{B^\delta_{\varphi(x)} \cap B^\delta_{\varphi(y)}}.
\]

By hypothesis the volume of the intersection of two geodesic balls depends only
on the distance between the centers and the radii.
Let~$m^\delta(\lambda)$ denote the measure of the intersection of two balls of radius~$\delta$
and center distance~$\lambda$.
Notice that if~$\delta$ is larger than the diameter of the space~$J$,
then~$m^\delta(\lambda)$ is constant in~$\lambda$:
any two balls of radius~$\delta$ have intersection equal to the whole space~$J$
and~$m^\delta(\lambda)=1$ for every~$\lambda$.
On the other hand, for~$\delta$ small enough the function~$\lambda \mapsto m^\delta(\lambda)$
is decreasing and strictly decreasing if~$\lambda \in [0,2\delta]$.
We conclude that, for every~$\delta$ small enough, we have~$\d(x,y)=\d(\varphi(x), \varphi(y))$
for any two points~$x,y$ of distance~$\d(x,y)\leq 2\delta$.

It remains to extend~$\d(x,y)=\d(\varphi(x), \varphi(y))$
to points of arbitrary distance. To do so,
choose a geodesic path between $x$~and~$y$, then divide the path in segments~$[x_i,x_i+1]$
of length smaller than~$2\delta$ with~$x_0=x$ and~$x_m=y$. By triangular inequality
we have
\[
	\d(\varphi(x), \varphi(y)) \leq \sum_i \d(\varphi(x_i), \varphi(x_{i+1}))
		= \sum_i \d(x_i, x_{i+1}).
\]
Since we have chosen a geodesic path between~$x$~and~$y$, the right
hand side is equal to~$\d(x,y)$.
This proves~$\d(\varphi(x), \varphi(y))\leq \d(x,y)$ for every pair of points.
Non-expansive local isometry of a connected compact space to itself
is an homeomorphism is a surjective map, and an isometry~\cite[Theorem 4.2]{calka1982local}.
\end{proof}


\subsection{Graphon on the torus} \label{sec:toral_graphon}
Consider the $2$-dimensional torus~$\T^2 = \R^2/\Z^2$, endowed with metric and measure induced
from~$\R^2$. Fix~$\delta<1$ and consider the geodesic graphon~$W_\delta$.
By Theorem~\ref{thm:iso_is_auto} the automorphism group of~$W_\delta$ is the same as the isometry
group of the torus. With $\Dih_k$ denoting the dihedral group on~$k$ elements, it is well known that
\[
	\Iso\p{\T^2} \cong \Dih_4 \ltimes \T^2.
\]
The component~$\T^2$ is the group of translation of the torus, acting on itself.
The dihedral group~$\Dih_4$ is generated by the transformations~$(x,y)\mapsto(-y,x)$
and~$(x,y)\mapsto(x,-y)$.
Notice that~$\Dih_4$ is the group of symmetries of the lattice~$\Z^2$;
taking the quotient with respect to a different lattice
leads to a homeomorphic (but not isometric) torus, thus to a possibly
different automorphism group.

\begin{figure}[h!]
\centering
\includegraphics[width=0.36\textwidth]{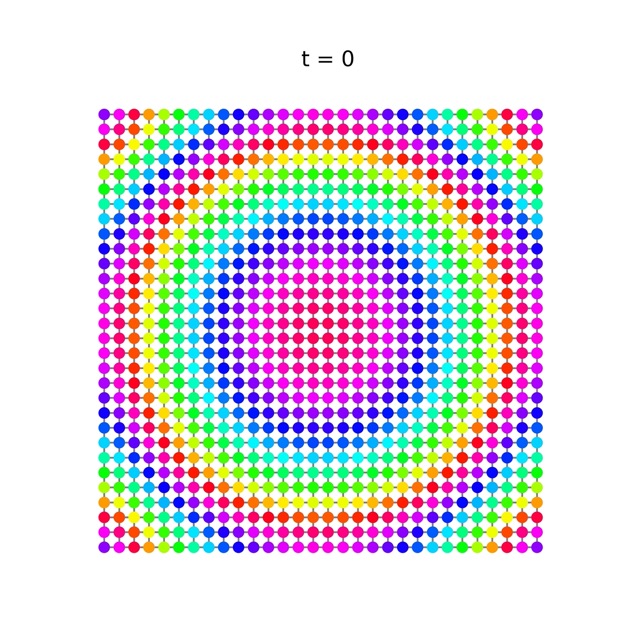}
\hspace{-25pt}
\includegraphics[width=0.36\textwidth]{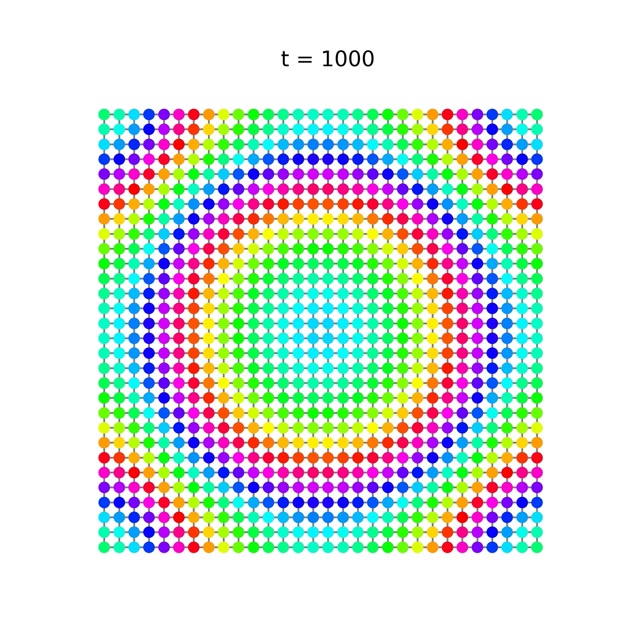}
\hspace{-25pt}
\includegraphics[width=0.36\textwidth]{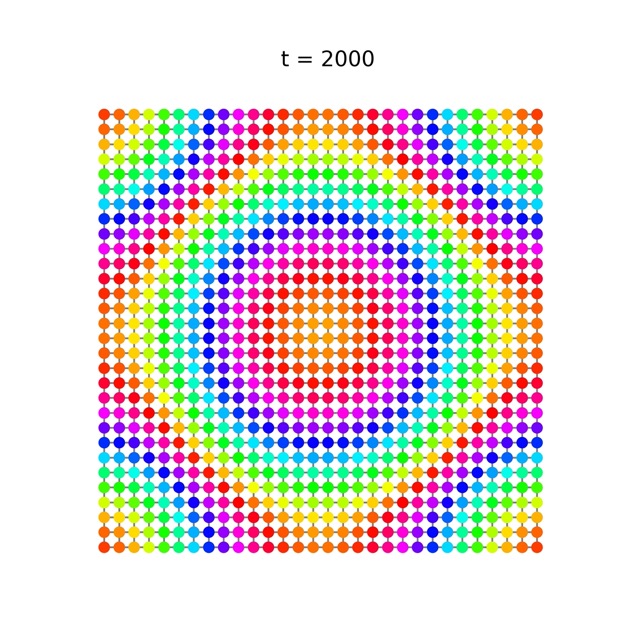}
\caption{
We simulate the evolution of an initial condition with symmetry~$(x,y)\mapsto(-y,x)$
with respect to the coupling function~$g(u_x,u_y) = \sin(u_y-u_x + 1)$.
The $2$-dimensional torus \davide{is} approximated by the graph~$C_{30}\times C_{30}$.
In the picture, vertices are arranged in a square for visual purposes:
vertices on opposite sides are understood to be connected.
The pattern is preserved over time.
}
\label{fig:square_pattern}
\end{figure}

More generally, consider the $d$-dimensional torus~$\T^d = \R^d/\Z^d$.
By Theorem~\ref{thm:iso_is_auto}, for every~$\delta$ small enough we have
\[
	\Aut(W_{\delta}) \cong \Iso(\Z^d) \ltimes \T^d
\]
where~$\Iso(\Z^d)$ is the symmetry group of the lattice~$\Z^d$.
Notice that~$\Iso(\Z^d)$ is a finite group.
As a consequence, we find some dynamically invariant subspaces.
For every~$\epsilon_1,\ldots,\epsilon_d\in \{-1,+1\}$ the subspace
given by the equation
\[
	u_{(x_1,\ldots,x_d)} = u_{(\epsilon_1 x_1,\ldots,\epsilon_d x_d)}
\]
is dynamically invariant. Indeed, notice that
for every~$\epsilon_1,\ldots,\epsilon_d\in \{-1,+1\}$ the map
\[
	(x_1,\ldots,x_d) \mapsto (\epsilon_1 x_1,\ldots,\epsilon_d x_d)
\]
is an isometry preserving the lattice~$\Z^d$, thus an isometry of~$\T^d$.
Moreover, for every direction~$(y_1,\ldots,y_d)\in \R^d$
the subspace given by the equation
\[
	u_{(x_1,\ldots,x_d)} = u_{(x_1+s y_1,\ldots,x_d + s y_d)}, \qquad s\in \R,
\]
is dynamically invariant. To show this,
notice that~$\{(s y_1,\ldots,s y_d) \mid s\in \R\}$ is a subgroup of~$\T^d$.


\subsection{Multi-twisted states}
We consider again geodesic coupling on the torus, now in the particular case of Kuramoto coupling.
Kuramoto model on a cycle graph~$C_n$ supports a family of equilibria known as~\emph{twisted states}: 
For $q\in\Z$ the $q$-twisted state is~$(2qk\pi/n)_{k=1,\ldots,n}$.
We will see that continuous analogues of the twisted states appear in geodesic graphons on tori of any dimension.

\begin{figure}[h!]
\centering
\includegraphics[width=0.49\textwidth]{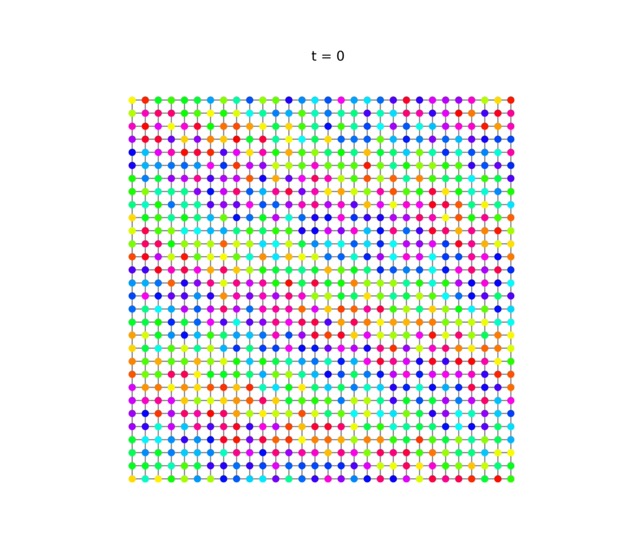}
\hspace{-30pt}
\includegraphics[width=0.49\textwidth]{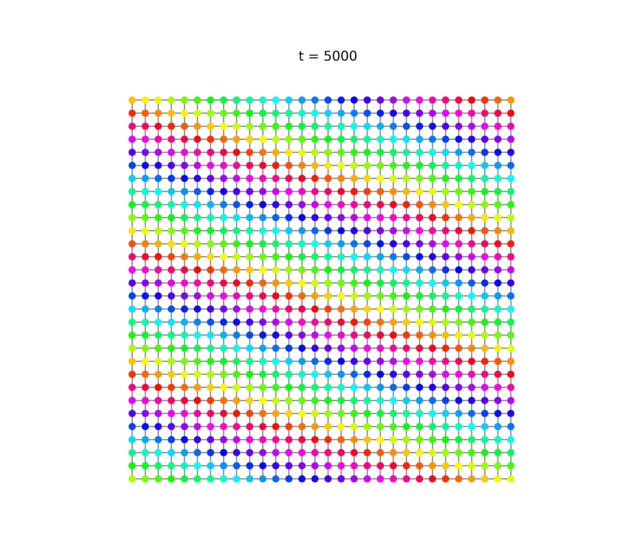}
\caption{
Convergence to a~$(1,3)$-twisted state from a perturbed configuration in~$C_{30}\times C_{30}$.
Vertices are arranged in a square for visual purposes.
}
\label{fig:twisted}
\end{figure}

For fixed~$\delta$ let~$W_\delta$ be the geodesic graphon on the $d$-dimensional
torus~$\T^d$ with respect to the
metric
\[
	\d_{\T^d}(x,y)=\max(\d_{\T}(x_1,y_1),\ldots,\d_{\T}(x_d,y_d))
\]
and the product measure~$\mu$ induced from~$\R^d$.
Consider the Kuramoto model on~$\T^d=(\R/\Z)^d$ where the phase~$\theta_x$ of $x\in \T^d$ evolves according to
\begin{equation} \label{eq:kuramoto_torus}
	\dot\theta_x = \int_{\T^d} W_\delta (x,y) \sin(\theta_y - \theta_x) \d \mu(y).
\end{equation}

\begin{proposition}
For every vector of non-zero integers~$(q_1,\ldots,q_d)$ the state
\begin{equation} \label{eq:multi-twisted}
	\theta_x= 2 \pi (q_1 x_1 + \dotsb + q_d x_d), \qquad x\in \T^d
\end{equation}
is an equilibrium of~\eqref{eq:kuramoto_torus}. We call it
$(q_1,\ldots,q_d)$-\emph{twisted state}.
\end{proposition}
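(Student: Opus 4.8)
The plan is to verify directly that the candidate state \eqref{eq:multi-twisted} makes the right-hand side of \eqref{eq:kuramoto_torus} vanish identically, exploiting the translation invariance of both the metric and the kernel. First I would substitute $\theta_x = 2\pi\langle q, x\rangle$ (writing $\langle q, x\rangle = q_1 x_1 + \dotsb + q_d x_d$) into the coupling term, obtaining at each $x \in \T^d$ the integral $\int_{\T^d} W_\delta(x,y) \sin\!\big(2\pi\langle q, y-x\rangle\big)\, \d\mu(y)$. Since $W_\delta(x,y)$ depends only on $\d_{\T^d}(x,y)$, and this metric is translation invariant, the change of variables $z = y - x$ (which is measure preserving on $\T^d$) turns the integral into $\int_{\T^d} W_\delta(0,z)\sin(2\pi\langle q, z\rangle)\, \d\mu(z)$, which no longer depends on $x$. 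So it suffices to show this single integral is zero.

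The key observation is a parity/symmetry argument: the neighborhood $\{z : \d_{\T^d}(0,z) \le \delta\}$ is the cube $[-\delta,\delta]^d$ (mod $\Z^d$), which is symmetric under $z \mapsto -z$; the measure $\mu$ is invariant under $z \mapsto -z$; while $\sin(2\pi\langle q, z\rangle)$ is an odd function of $z$. Hence $W_\delta(0,z)\sin(2\pi\langle q,z\rangle)$ is an odd integrand over a symmetric domain with a symmetric measure, so the integral vanishes. Concretely, I would split $[-\delta,\delta]^d = \{z_d > 0\} \cup \{z_d < 0\}$ (the boundary has measure zero), and pair up $z$ with $-z$; because $\sin$ picks up a sign and $W_\delta$ does not, the two contributions cancel. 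One must be slightly careful that $\delta < 1$ (and indeed $\delta$ small) guarantees the geodesic ball really is a genuine cube $[-\delta,\delta]^d$ rather than wrapping around the torus, so that the involution $z\mapsto -z$ maps the ball to itself without identification issues; this is where the hypothesis on $\delta$ enters, though for the equilibrium claim even $\delta < 1/2$ in each coordinate suffices.

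I expect the only real subtlety — not a true obstacle — to be bookkeeping about where the equation is required to hold: by Definition~\ref{def:graphon_dynamical_system} equilibria of a graphon dynamical system are functions for which the right-hand side is zero almost everywhere, so it is enough that the vanishing holds for a.e. $x$, which the computation above in fact establishes for \emph{every} $x$. One could alternatively phrase the argument via the ``non-invertible symmetry'' mechanism of Section~\ref{sec:InvSubsp}, as the text hints: the map $\T^d \to \T^d$, $x \mapsto (x_1,\dots,x_d)$ post-composed with projections exhibits twisted states as pullbacks of constant states under measure- and adjacency-preserving maps; but the direct substitution is shorter and I would present that, relegating the symmetry interpretation to a remark.
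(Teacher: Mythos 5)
Your proof is correct but takes a genuinely different route from the paper. You prove the claim for arbitrary $(q_1,\ldots,q_d)$ in one stroke: translate so the ball is centered at the origin (which factors out the $x$-dependence), then kill the integral by the parity observation that $\sin(2\pi\langle q, z\rangle)$ is odd while the cube $[-\delta,\delta]^d$ and the Lebesgue measure are invariant under $z\mapsto -z$. The paper instead verifies only the base case $q=(1,\ldots,1)$ by explicit computation (exploiting the product structure of the cube to write the integral as $\Im\bigl(\prod_k \int_{x_k-\delta}^{x_k+\delta} e^{2\pi i(y_k - x_k)}\,\d y_k\bigr)$, which is the imaginary part of a real number), and then deduces the general case by applying the Correspondence Theorem to the \emph{non-invertible} measure-preserving map $\varphi:(x_1,\ldots,x_d)\mapsto(q_1x_1,\ldots,q_dx_d)\bmod 1$, which sends the $(1,\ldots,1)$-twisted state to the $(q_1,\ldots,q_d)$-twisted state. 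Your argument is shorter and more elementary; the paper's is deliberately less economical because its point is to showcase the non-invertible symmetry mechanism introduced in Section~\ref{sec:InvSubsp} — you correctly identify this alternative route at the end but choose the direct one. Both are valid; the paper's choice buys a concrete illustration of its general machinery, while yours buys brevity and transparency.
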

\begin{proof}
{
Substituting the $(q_1,\ldots,q_d)$-twisted state into the evolution equation yields
\begin{align*}
	& \int_{\T^d} W_{\delta}(x,y)
		\sin(2\pi (q_1(y_1 - x_1) + \cdots + q_d(y_d - x_d)))\, \d \mu(y) \\
	& \qquad = \Im \p{
			\prod_{k=1}^d \int_{x_k-\delta}^{x_k+\delta} e^{2 \pi q_k i(y_k - x_k)} \, \d y_k
		} \\
	& \qquad = \Im \p{
			\prod_{k=1}^d \frac{\sin(2\pi q_k\delta)}{\pi q_k}
		} \\
	& \qquad = 0
\end{align*}
where~$\Im$ denotes the imaginary part of a complex number.
}
\end{proof}


\subsection{Spherical graphon} \label{sec:spherical_graphon}
For simplicity we restrict our analysis to the $2$-dimensional sphere~$S^2$.
In it convenient to think of~$S^2$ embedded in~$\R^3$
as the set of solutions of~$x^2+y^2+z^2 = 1$.
By Theorem~\ref{thm:iso_is_auto} we have
\[
	\Aut(W_\delta) \cong O(3).
\]
The graphon~$W_{\delta}$ with~$\delta=\pi/2$ is known as~\emph{spherical graphon},
see~\cite[Example 13.2]{lovasz2012large}.
A ``sparse'' version of this graphon, known as spherical graphop,
will be considered in Section~\ref{sec:graphop}.

As a concrete example of symmetry-induced dynamically invariant subspace,
consider the
set of functions invariant under rotation along a fixed axis.
Reduced dynamics can be represented as a graphon system parametrized by an interval,
the rotation axis.
Other examples are given by the finite subgroups of~$O(3)$, that is, the group of symmetries of the platonic solids.

\begin{figure}[h!]
\centering
\includegraphics[width=0.32\textwidth]{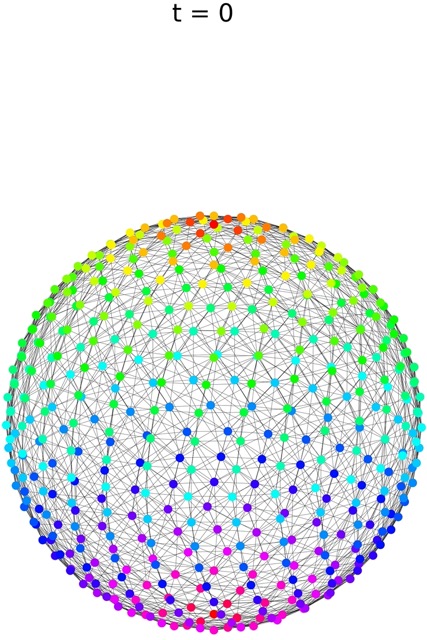}
\includegraphics[width=0.32\textwidth]{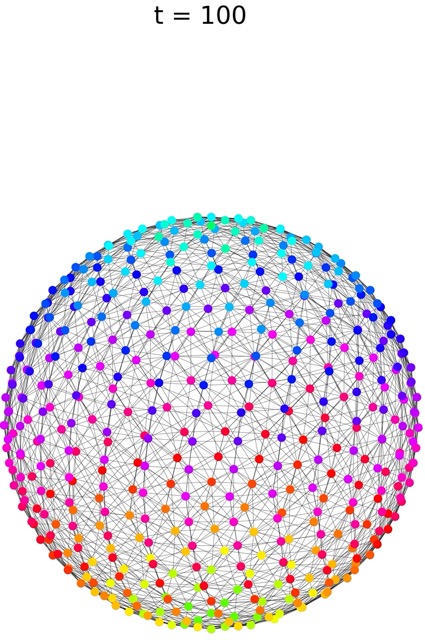}
\includegraphics[width=0.32\textwidth]{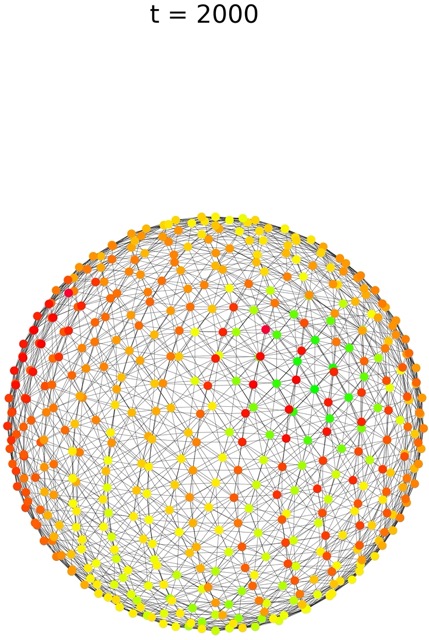}
\caption{
We consider the coupling $\sin(\theta_y-\theta_x +1)$ on a graph~$G$ with~$468$ vertices,
obtained by discretizing the sphere.
Vertices are placed on parallel circles in number proportional
to the radius.
We start with an initial condition which depends on the latitude only.
In the limit this would correspond to a solution with rotational symmetry
along the north-south axis.
Numerically we see that symmetry is approximatively preserved for some time,
but breaks for~$t$ very large.
This effect, caused by the fact that the graph~$G$ is not truly rotationally symmetric
bot only approximatively, will be discussed in Section~\ref{sec:approximate_symmetries}.
}
\label{fig:sphere}
\end{figure}


\section{Ghosts of Symmetries} \label{sec:approximate_symmetries}
\label{sec:approximated_symmetries}

For a moment we return to graphs and graphons as combinatorial objects. If we consider a convergent family of graphs, then the automorphisms of each element may be starkly different from the automorphisms of the limit graphon; cf.~\cite{lovasz2015automorphism}.
For example, with high probability a large \ER random graph
has no symmetries~\cite[Corollary 2.3.3]{godsil2001algebraic}
while the constant limit graphon has many (Section~\ref{sec:constant_graphon}).

Now turning to dynamics, a key reason to analyze dynamics on graphons is that they approximate the dynamics of large but finite graphs, which are much less analytically tractable. 
One can make this rigorous by proving explicit statements of how close the dynamics on finite graphs are to the dynamics on the limit object (at least for some finite time);
see for example~\cite{Chiba2019, Chiba2016, Medvedev2017, Medvedev2013a, kaliuzhnyi2018mean}.

Therefore, there are families of converging graphs supporting very few synchrony patterns
(due to the lack of symmetries)
but the dynamics on the limits have a lot (due to the many symmetries). 
But since the dynamics of finite and limit system are close, we know that even the finite, non-symmetric dynamical system can show finite-time synchronous behavior due to the dynamics of the limit object.
So for finite graph dynamical systems we can see \emph{ghosts of symmetries} of the limit graphon dynamical system: We observed this effect for \ER random graphs and for spherical graphs in Figure~\ref{fig:sphere} and is related to the metastable dynamics observed
in~\cite[Figure 5]{medvedev2015stability}.
The important consequence is we should not be surprised if graph dynamical systems on large but highly non-symmetric graphs show finite-time synchronized dynamics---understanding the symmetries of the limit object is the key to this insight.
The goal of this section is formalizing this observation.

How well graph dynamics approximate the graphon limit dynamics
depends on the choice of~$f$ and~$g$.
For concreteness we focus on Kuramoto dynamics
\begin{equation} \label{eq:approximate:kuramoto}
	\dot u_x = \int_{J} W(x,y) \sin(u_y - u_x) \ \d \mu (y)
\end{equation}
in line with previous investigations.
In~\cite{Medvedev2013a} the author assumes convergence of~$(J,W^{(n)})$
to~$(J,W)$ in~$L^1(J^2)$. Although this is convenient for the convergence of dynamics,
it excludes several convergent graph sequences that are of interest here,
like \ER random graphs.
In order to include these cases, we will prove a stronger statement.

Indeed, we will assume convergence in the weaker norm~$\enorm_{\infty\to 1}$,
which is equivalent to convergence in terms of homomorphism densities
up to vertex relabeling~\cite[Theorem 11.59]{lovasz2012large}.
Indeed we show that the norm~$\enorm_{\infty\to 1}$,
which is strictly weaker than the $L^1$-norm,
is actually sufficient to guarantee convergence of dynamics in the case
of sine coupling.
In the following theorem we prove~$\enorm_{\infty\to 1}$-continuity
with respect to the graphon and~$\enorm_{1}$-continuity with respect to the initial
condition:

\begin{theorem} \label{thm:continuity_of_dynamics}
Consider two graphons~$W,U$ on the same index space~$J$
and let~$\flow^W,\flow^U$ denote the flows induced by
Kuramoto coupling~\eqref{eq:approximate:kuramoto} respectively.
Let~$u(0)$, $v(0)\in L^1(J)$. Define~$u(t) = \flow_t^W(u)$ and~$v(t) = \flow_t^U(v)$.
Then for every~$t\in\R$
\[
	\norm{u(t) - v(t)}_1 \leq
		\p{
			\norm{u(0)-v(0)}_1 + 2t \norm{W - U}_{\infty\to 1}
		}
	e^{2t}.
\]
\end{theorem}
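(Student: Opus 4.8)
The plan is to estimate $\ddt\norm{u(t)-v(t)}_1$ by a Gr\"onwall-type argument, splitting the difference of the two vector fields into a term controlled by $\norm{u(t)-v(t)}_1$ and a term controlled by $\norm{W-U}_{\infty\to 1}$. Write $\mathcal F_W(u)_x = \int_J W(x,y)\sin(u_y-u_x)\,\d\mu(y)$, so that $\dot u = \mathcal F_W(u)$ and $\dot v = \mathcal F_U(v)$. First I would observe that
\[
	\ddt \norm{u(t)-v(t)}_1 \leq \norm{\mathcal F_W(u(t)) - \mathcal F_U(v(t))}_1
		\leq \norm{\mathcal F_W(u(t)) - \mathcal F_W(v(t))}_1 + \norm{\mathcal F_W(v(t)) - \mathcal F_U(v(t))}_1,
\]
where the first inequality uses that $\abs{\cdot}$ has derivative bounded by $1$ (formally, $\ddt\norm{w(t)}_1 \leq \norm{\dot w(t)}_1$ for absolutely continuous $w$). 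Then it remains to bound the two pieces.

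For the first piece I would use that $\sin$ is $1$-Lipschitz together with $W\leq 1$: for fixed $x$,
\[
	\abs{\mathcal F_W(u)_x - \mathcal F_W(v)_x} \leq \int_J W(x,y)\,\abs{(u_y-u_x)-(v_y-v_x)}\,\d\mu(y)
		\leq \int_J \abs{u_y-v_y}\,\d\mu(y) + \abs{u_x-v_x},
\]
and integrating over $x\in J$ gives $\norm{\mathcal F_W(u)-\mathcal F_W(v)}_1 \leq 2\norm{u-v}_1$. For the second piece the key point is the definition of the $\enorm_{\infty\to 1}$ norm: for a kernel $K$, $\norm{K}_{\infty\to 1} = \sup_{\norm{h}_\infty\leq 1}\int_J\abs{\int_J K(x,y)h(y)\,\d\mu(y)}\,\d\mu(x)$. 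Since $\mathcal F_W(v)_x - \mathcal F_U(v)_x = \int_J (W-U)(x,y)\sin(v_y-v_x)\,\d\mu(y)$, I would like to pull the $x$-dependent factor out. The clean way is to note that for each fixed $x$ the function $y\mapsto \sin(v_y - v_x)$ has sup-norm at most $1$, but the supremum in the $\enorm_{\infty\to 1}$ norm is over $h$ not depending on $x$; so I would instead expand $\sin(v_y-v_x) = \sin v_y\cos v_x - \cos v_y \sin v_x$, giving
\[
	\mathcal F_W(v)_x - \mathcal F_U(v)_x = \cos v_x \int_J (W-U)(x,y)\sin v_y\,\d\mu(y)
		- \sin v_x \int_J (W-U)(x,y)\cos v_y\,\d\mu(y),
\]
and since $\abs{\cos v_x},\abs{\sin v_x}\leq 1$ and $\norm{\sin v}_\infty,\norm{\cos v}_\infty\leq 1$, integrating over $x$ yields $\norm{\mathcal F_W(v)-\mathcal F_U(v)}_1 \leq 2\norm{W-U}_{\infty\to 1}$.

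Combining, $\ddt\norm{u(t)-v(t)}_1 \leq 2\norm{u(t)-v(t)}_1 + 2\norm{W-U}_{\infty\to 1}$, and Gr\"onwall's inequality (in the form: if $\phi'\leq a\phi + b$ then $\phi(t)\leq (\phi(0) + bt)e^{at}$, using $e^{at}-1\geq at$) gives exactly the claimed bound for $t\geq 0$; the case $t<0$ follows by the time-reversal symmetry noted in Lemma~\ref{lem:existence_uniqueness} (replace $f$ by $-f$, equivalently run the flow backward), or by redoing the estimate with $\abs{t}$. The main obstacle is the second term: a naive application of the $\enorm_{\infty\to 1}$ norm fails because the test function $y\mapsto\sin(v_y-v_x)$ depends on the outer variable $x$, and the trigonometric addition-formula trick above is precisely what is needed to decouple the variables; the rest is routine. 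A minor technical point worth a sentence is justifying $\ddt\norm{w(t)}_1\leq\norm{\dot w(t)}_1$ and the differential inequality for the (only a.e. differentiable) function $t\mapsto\norm{u(t)-v(t)}_1$, which one handles via the integral form $\norm{w(t)}_1\leq\norm{w(0)}_1 + \int_0^t\norm{\dot w(s)}_1\,\d s$ and then Gr\"onwall in integral form.
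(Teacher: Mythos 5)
Your proof is correct and follows essentially the same approach as the paper: the same addition-formula trick to decouple the outer variable $x$ from the test function $y\mapsto\sin(\cdot)$ so that the $\enorm_{\infty\to1}$ norm applies, the same Lipschitz bound using $W\le 1$ and $\abs{\sin'}\le 1$, and the same Gr\"onwall argument (your decomposition $(\mathcal F_W(u)-\mathcal F_W(v))+(\mathcal F_W(v)-\mathcal F_U(v))$ is a mirror image of the paper's and works identically). One tiny slip: the parenthetical ``using $e^{at}-1\ge at$'' should read $e^{at}-1\le a t\,e^{at}$; the integral-form Gr\"onwall you also invoke gives the stated bound directly and is the cleaner route.
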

\begin{proof}
For every~$s\in \R$ and every~$x\in J$
we have
\begin{subequations}
\begin{align}
	\dot u_x (s) &- \dot v_x (s) = \label{eq:kur:0} \\
		& \int \p{W(x,y) - U(x,y)} \sin(u_y(s) - u_x(s)) \d y \label{eq:kur:1} \\
		& - \int U(x,y) \p{\sin(v_y(s) - v_x(s)) - \sin(u_y(s) - u_x(s))} \d y \label{eq:kur:2}.
\end{align}
\end{subequations}
From the addition formula for sine it follows that~\eqref{eq:kur:1} is bounded by 
\begin{align*}
	& \norm{W - U}_{\infty\to 1} \norm{\sin(u(s))}_\infty \abs{\cos(u_x(s))} \\
	& \quad + \norm{W - U}_{\infty\to 1}  \norm{\cos(u(s))}_\infty \abs{\sin(u_x(s))} \\
	& \leq 2 \norm{W - U}_{\infty\to 1}.
\end{align*}
On the other hand~\eqref{eq:kur:2} is bounded by
\[
		\abs{v_y(s) - u_y(s)} + \abs{v_x(s) - u_x(s)}.
\]
Therefore, integrating~\eqref{eq:kur:0},~\eqref{eq:kur:1}, and~\eqref{eq:kur:2}
on~$J$ gives
\[
	\int \abs{\dot u_x (s) - \dot v_x (s)} \d x \leq
		2 \norm{W - U}_{\infty\to 1} + 2 \norm{u\davide{}-v}_1.
\]
Integrating with respect to time gives
\[
	\norm{u(t) - v(t)}_1 \leq \norm{u(0)-v(0)}_1
		+ 2 t \norm{W - U}_{\infty\to 1} + \int_0^t 2 \norm{u(s)-v(s)}_1 \d s.
\]
Gr\"onwall's Lemma completes the proof.
\end{proof}

Let~$(J, W^{(n)})_n$ be a sequence of graphons which converges to~$(J,W)$ in the infinity to one norm.
Theorem~\ref{thm:continuity_of_dynamics} implies that, for every fixed~$t$, the flow
\[
	\flow^{W^{(n)}}_t: L^1(J) \to L^1(J)
\]
converges uniformly, as a map, to
\[
	\flow^{W}_t: L^1(J) \to L^1(J).
\]
Moreover, convergence is uniform in time
if~$t$ is restricted to any compact interval~$[-T,+T]$.
In particular:

\begin{corollary}
Let~$(J, W^{(n)})_n$ be a sequence of graphons which converges to~$(J,W)$
in the infinity to one norm. Let~$u^{(n)} \in L^1(J)$
be a sequence of initial conditions converging to~$u\in L^1(J)$ in the $L^1$-norm.
Then for every~$t\in \R$
\[
	\lim_{n\to \infty} \norm{u^{(n)}(t)-u(t)}_1 = 0
\]
holds. Convergence is uniform in~$t$ on any compact interval~$[-T,+T]$.
\end{corollary}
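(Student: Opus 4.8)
The plan is to deduce the statement directly from Theorem~\ref{thm:continuity_of_dynamics}, of which it is essentially a reformulation in the language of sequences. First I would fix an arbitrary $t\in\R$ and invoke that theorem with the two graphons taken to be $W$ and $W^{(n)}$ and the two initial conditions taken to be $u(0)=u$ and $v(0)=u^{(n)}$, obtaining the estimate
\[
	\norm{u^{(n)}(t) - u(t)}_1 \le \p{\norm{u^{(n)} - u}_1 + 2\abs{t}\,\norm{W^{(n)} - W}_{\infty\to 1}}e^{2\abs{t}}
\]
for every $n$.

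Next I would pass to the limit $n\to\infty$: by the two convergence hypotheses we have $\norm{u^{(n)}-u}_1\to 0$ and $\norm{W^{(n)}-W}_{\infty\to 1}\to 0$, while $e^{2\abs{t}}$ is a fixed finite factor, so the right-hand side tends to $0$ and the pointwise claim follows. For the locally uniform version, I would note that the bound above is nondecreasing in $\abs{t}$; hence for every $t$ with $\abs{t}\le T$ it is dominated by the $t$-independent quantity $\p{\norm{u^{(n)} - u}_1 + 2T\,\norm{W^{(n)} - W}_{\infty\to 1}}e^{2T}$, which still tends to $0$ as $n\to\infty$. Taking the supremum over $\abs{t}\le T$ then yields uniform convergence on $[-T,+T]$.

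I do not expect any genuine obstacle here: all the analytic content---the Gr\"onwall argument bounding the divergence of two graphon Kuramoto flows by the $\enorm_{\infty\to 1}$-distance of the kernels together with the $\enorm_1$-distance of the initial data---is already carried out in Theorem~\ref{thm:continuity_of_dynamics}. The only points requiring mild care are writing the estimate with $\abs{t}$ rather than $t$, so that it is also meaningful for negative times (i.e.\ for the backward flow furnished by Lemma~\ref{lem:existence_uniqueness}), and invoking the monotonicity in $\abs{t}$ to upgrade pointwise convergence to convergence uniform on compact time intervals.
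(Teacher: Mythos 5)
Your proposal is correct and matches the paper's own reasoning: the paper presents this corollary as an immediate consequence of Theorem~\ref{thm:continuity_of_dynamics}, obtained by plugging in $W$, $W^{(n)}$, $u(0)=u$, $v(0)=u^{(n)}$ and letting $n\to\infty$, with uniformity on $[-T,T]$ coming from the monotone dependence of the bound on $\abs{t}$. Your observation that the theorem's estimate should be read with $\abs{t}$ in place of $t$ for negative times (which is justified by the backward flow from Lemma~\ref{lem:existence_uniqueness}, or by running the Gr\"onwall argument on $[t,0]$) is a sensible point of care that the paper leaves implicit.
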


Theorem~\ref{thm:continuity_of_dynamics} explains
the almost symmetric dynamics of large systems whose limit is symmetric:

\begin{corollary} [The Ghost of Symmetries] \label{cor:ghosts}
Let~$(J,W)$ be a graphon with automorphism~$\varphi$
and let~$u(t) \in \D(J,W)$ be a trajectory satisfying
$
	\varphi^* (u(0)) = u(0).
$
Let~$(J, W^{(n)})_n$ be a sequence of graphons
and~$u^{(n)}(t) \in \D(J,W^{(n)})$ a sequence of trajectories
such that
\[
	\lim_{n\to \infty} \norm{W^{(n)} - W}_{\infty\to 1} = 0,
	\qquad
	\lim_{n\to \infty} \norm{u^{(n)}(0) - u(0)}_1 = 0.
\]
Then for every~$t\in \R$
\[
	\lim_{n\to \infty} \norm{\varphi^* (u^{(n)}(t)) - u^{(n)}(t)}_1 = 0.
\]
Moreover, convergence is uniform in~$t$ on any compact interval~$[-T,+T]$.
\end{corollary}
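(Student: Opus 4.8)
The plan is to reduce the statement to Theorem~\ref{thm:continuity_of_dynamics} by a triangle-inequality argument, exploiting the fact that the limiting trajectory $u(t)$ stays inside $\fix(\varphi^*)$ for all time. First I would note that, since $\varphi$ is a graphon automorphism of $(J,W)$, Corollary~\ref{cor:imp_gives_isometry} makes $\varphi^*$ a symmetry of $\D(J,W)$, and (as recalled in Section~\ref{sec:graph_dynamical_system}) the fixed-point set of a symmetry is dynamically invariant; hence $\varphi^*(u(0)) = u(0)$ forces $\varphi^*(u(t)) = u(t)$ for every $t\in\R$.

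Next, using that $\varphi^*$ is an isometry of $L^1(J)$ (Theorem~\ref{thm:mp_gives_isometry}) together with $\varphi^*(u(t)) = u(t)$, the triangle inequality gives
\[
	\norm{\varphi^*(u^{(n)}(t)) - u^{(n)}(t)}_1
	\leq \norm{\varphi^*(u^{(n)}(t)) - \varphi^*(u(t))}_1 + \norm{\varphi^*(u(t)) - u^{(n)}(t)}_1
	= 2\,\norm{u^{(n)}(t) - u(t)}_1 .
\]
I would then apply Theorem~\ref{thm:continuity_of_dynamics} to the pair of graphons $W^{(n)}$ and $W$ on their common index space $J$, with initial data $u^{(n)}(0)$ and $u(0)$ (running the Gr\"onwall estimate on the time-reversed flow for $t<0$, which replaces $t$ by $\abs{t}$), to obtain
\[
	\norm{u^{(n)}(t) - u(t)}_1 \leq \p{\norm{u^{(n)}(0) - u(0)}_1 + 2\abs{t}\,\norm{W^{(n)} - W}_{\infty\to1}} e^{2\abs{t}} .
\]
Combining the two displays and letting $n\to\infty$ sends both terms inside the parentheses to $0$ by hypothesis; since $2\abs{t}\,e^{2\abs{t}} \leq 2T e^{2T}$ and $e^{2\abs{t}} \leq e^{2T}$ for $\abs{t}\leq T$, the convergence is uniform on $[-T,T]$, which is the asserted conclusion.

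There is essentially no deep obstacle here: the analytic content sits entirely in Theorem~\ref{thm:continuity_of_dynamics} and in the dynamical invariance of $\fix(\varphi^*)$, both already established. The only points deserving a moment of care are justifying $\varphi^*(u(t)) = u(t)$ for all $t$ rather than merely at $t=0$ — this is precisely the invariance of the fixed-point set of a symmetry — and that Theorem~\ref{thm:continuity_of_dynamics} requires $W^{(n)}$ and $W$ to be defined on the same index space $J$, which is exactly the standing hypothesis, so that $\enorm_{\infty\to1}$-closeness is directly applicable.
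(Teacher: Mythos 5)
Your argument is correct and is essentially the paper's own proof: the same triangle inequality, the same use of $\varphi^*$ being both an isometry and a symmetry so that $\varphi^*(u(t))=u(t)$ for all $t$, and the same invocation of Theorem~\ref{thm:continuity_of_dynamics} followed by a limit. Your remark that the Gr\"onwall bound should carry $\abs{t}$ rather than $t$ when $t<0$ is a small but genuine point of care that the paper's statement of Theorem~\ref{thm:continuity_of_dynamics} (and its use in the proof of the corollary) glosses over.
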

\begin{proof}
By triangular inequality
\[
	\norm{\varphi^* (u^{(n)}(t)) - u^{(n)}(t)}_1
	\leq \norm{\varphi^* (u^{(n)}(t)) - \varphi^* (u (t))}_1
		+ \norm{\varphi^* (u(t)) - u^{(n)} (t)}_1.
\]
Consider the two terms in the right hand side.
Since~$\varphi^*$ is an isometry of~$L^1$
then the first one is equal to~$\norm{u^{(n)}(t) - u (t)}_1$.
Since~$\varphi^*$ is a symmetry of~$\D(J,W)$
then the second term is equal to~$\norm{u^{(n)}(t) - u (t)}_1$.
Therefore
\[
	\norm{\varphi^* (u^{(n)}(t)) - u^{(n)}(t)}_1 \leq 2 \norm{u^{(n)}(t) - u (t)}_1.
\]
Theorem~\ref{thm:continuity_of_dynamics} implies
\begin{equation} \label{eq:approximate_estimate}
	\norm{\varphi^* (u^{(n)}(t)) - u^{(n)}(t)}_1 \leq 2
	\p{
			\norm{u^{(n)}(0)-u(0)}_1 + 2t \norm{W^{(n)} - W}_{\infty\to 1}
		}
	e^{2t}.
\end{equation}
Taking the limit concludes the proof.
\end{proof}

Equation~\eqref{eq:approximate_estimate} shows how much the trajectory of an approximatively
symmetric system can deviate from being symmetric for any fixed time.
In particular it can be used to estimate the minimal time needed for a
given large deviation from symmetry.

Note that an additional step is needed to apply Corollary~\ref{cor:ghosts} to simulations.
Indeed, in simulations we approximated the limit graphon dynamical system~$\D(J,W)$, defined
on some continuum index space~$J$,
with a graph dynamical system~$\D(G^{(n)})$
on a different index space, namely the discrete set~$\{1,\ldots,n\}$.
In order to apply Corollary~\ref{cor:ghosts} one has to first embed
every finite system in the same index space~$J$ where the limit is defined.
There are many ways of doing so; if~$J=I$ the canonical embedding is one such way.
If the graph sequence~$G^{(n)}$ converges to the graphon~$(J,W)$
in terms of homomorphism densities then it is always possible to represent~$G^{(n)}$
as a graphon~$W^{(n)}$ on~$J$ in a way that makes~$\norm{W^{(n)}-W}_{\infty\to 1}$ 
converge to zero.

\davide{
Notice that the proof of Corollary~\ref{cor:ghosts} does not require~$\varphi$
to be invertible. In other words, one may also observe ghosts of generalized symmetries.
}


\section{Symmetries of Mean-Field Graphon Dynamical Systems} \label{sec:mean-field-graphon}

In this section we consider generalizations of graphon dynamical systems
in which states, instead of being real-valued, are measure-valued.
So far, the state of $x\in J$ was given by some number~$u_x \in \R$. By contrast, mean-field transport equations describe not the evolution of points but rather general probability measures. 
Mean-field equations on convergent families of graphs have been introduced in~\cite{kaliuzhnyi2018mean}; we will refer to these systems as \emph{mean-field graphon dynamical systems} and make this precise below.
We now generalize graphon-induced symmetries to such mean-field graphon systems.
The main result of this section is to exploit symmetry arguments to show that mean-field graphon system contains a range of simpler systems that have been analyzed independent of one another literature.
For concreteness, we follow~\cite{kaliuzhnyi2018mean} and restrict our analysis to the case of phase oscillators and assume~$J=I$.

\newcommand		{\M} 	{\mathcal M}
\newcommand		{\MI}	{\M^\I}
\newcommand		{\TI}	{\T^\I}

We first take a step back and recall the notion of mean-field dynamics for a globally coupled Kuramoto oscillator network~\eqref{eq:graph:kuramoto}---we refer to this as the corresponding \emph{mean-field dynamical system}. 
Let~$\T = \R/\Z$ be the $1$-dimensional torus and let~$\M$ be the set of Borel probabilities on~$\T$. Rather than tracking the state of each oscillators, the mean-field dynamical system
describes the evolution of a measure~$\mu \in \M$ over time, given by
\begin{equation} \label{eq:mean-field-push}
	\mu(t) = \Phi(t) \# \mu_0
\end{equation}
where~$\Phi$ is the flow on~$\T$ induced by the differential equation
\begin{equation} \label{eq:mean-field-diff}
	\dot u(t) = \int_\T \sin(v-u(t))\ \d\mu(t)(v)
\end{equation}
and~$\#$ denotes the push-forward of measures.
Together, equations \eqref{eq:mean-field-push}~and~\eqref{eq:mean-field-diff} represent the mean-field dynamics of~\eqref{eq:graph:kuramoto} on complete graphs as~$n\to \infty$.

While in the mean-field dynamical system we only track the state distribution of all oscillators as a whole, in the graphon dynamical systems we consider the dynamics on the corresponding graph limit (the constant graphon on $J=I$).
Indeed, the mean-field system can be obtained from the graphon dynamical system by forgetting the labels:
To make this statement precise, let~$\lambda$ denote the Lebesgue measure on~$I$, let~$W=1$ and consider a solution~$t\mapsto u(t)$ of the graphon system
\[
	\dot u_x(t) = \int_I W(x,y) \sin(u_y(t) - u_x(t)) \ \d \lambda (y).
\]
Then the measure~$\mu(t) = u(t) \# \lambda$ is a solution of the mean-field system~\eqref{eq:mean-field-push}:
\[
	\int_I \sin(u_y(t)-u_x(t)) \ \d \lambda (y) =
	\int_{\T} \sin(v-u_x(t)) \ \d \mu(t)(v).
\]

The mean-field graphon dynamical system now combines the two aspects: It describes the evolution of a family of measures indexed by~$I$, a measure-valued measurable function~$I\to \M$.
We can interpret the latter in two ways:
First, as a graphon system with non-deterministic states;
second, as infinitely many mean-field systems coupled through a graphon.

Let~$\MI$ denote the set of maps~$I\to \M$ such that the preimages of open
sets are measurable. This space is endowed with a metric~\cite{kaliuzhnyi2018mean}.
Fix a continuous map~$\mu: \R \to \MI$, an index~$x\in I$
and a graphon~$W$.
For every~$t_0\in \R$ and~$u_0\in\T$
there exist a unique solution~$u_x: \R\to \T$ to the following initial value
problem~\cite[Lemma 2.2]{kaliuzhnyi2018mean}:
\begin{equation} \label{eq:mfg_flow}
\begin{cases}
	\dot u_x(t) = \int_{I} W(x,y) \int_{\T} \sin(v-u_x(t)) \ \d \mu(y,t)(v) \ \d y \\
	u_x(t_0) = u_0.
\end{cases}
\end{equation}
Let~$\Phi(\mu,x,t,\c)$ denote the flow induced by the initial value problem.
Notice that we have a flow for each choice of~$x$.

\begin{definition}
The \emph{mean-field graphon dynamical system} is given by
\begin{subequations}\label{eq:mfg}
\begin{align}
	\mu(x,t) &= \Phi(\mu,x,t,\c)\# \mu(x,0)  \label{eq:mfg_push} \\
	\mu(x,0) &= \mu_0(x) \label{eq:mfg_init}
\end{align}
\end{subequations}
where~$\Phi(\mu,x,t,\c):\T\to \T$ is the flow induced by~\eqref{eq:mfg_flow}
and~$\mu_0\in \MI$. These equations are required to hold for every~$t\in \R$
and almost every~$x \in I$.
\end{definition}

Existence and uniqueness of the mean-field graphon system
are proved in~\cite{kaliuzhnyi2018mean} for~$t\in [0,T]$
and can be extended to~$t\in \R$ as in Lemma~\ref{lem:existence_uniqueness}.

Graphon-induced symmetries act on the index space~$I$ and not on the state space.
As a consequence, the graphon-induced invariant subspaces
in the graphon system~$I\to \R$ and in the mean-field graphon system~$I\to \M$
are essentially the same, in the sense that they are given by the same equations.
In the following theorem we prove that indeed graphon-induced symmetries
extend to the mean-field graphon system.

\begin{theorem} \label{prop:sym_mp}
Let~$\gamma:\I\to \I$ be a measure-preserving transformation satisfying~$W^\gamma=W$.
Then~$\gamma$ acts on~$\MI$ by sending a measured-valued function~$\mu: x\mapsto \mu(x)$
to the measure-valued function~$\mu\circ\gamma: x\mapsto \mu(\gamma(x))$.
The action is a symmetry
of the mean-field graphon system.
\end{theorem}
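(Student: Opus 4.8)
The plan is to follow the same template used for Theorem~\ref{thm:mp_gives_isometry}, adapting the change-of-variable argument from the real-valued setting to the measure-valued setting. First I would verify that $\mu\mapsto\mu\circ\gamma$ is a well-defined map $\MI\to\MI$: composition with the measurable map $\gamma$ preserves measurability of preimages of open sets, so $\mu\circ\gamma\in\MI$ whenever $\mu\in\MI$. (If one also wants $\gamma^*$ to be an isometry for the metric on $\MI$ of~\cite{kaliuzhnyi2018mean}, this follows because $\gamma$ is measure preserving, exactly as in the proof of Theorem~\ref{thm:mp_gives_isometry}; but for the symmetry statement we only need that $\gamma^*$ maps solutions to solutions.)

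Next I would take a solution $t\mapsto\mu(\cdot,t)$ of the mean-field graphon system~\eqref{eq:mfg} and show that $t\mapsto\mu(\gamma(\cdot),t)$ is again a solution. The key observation is that the flow $\Phi$ defined by~\eqref{eq:mfg_flow} transforms correctly: I claim $\Phi(\mu\circ\gamma,\,x,\,t,\,\c)=\Phi(\mu,\,\gamma(x),\,t,\,\c)$. This is seen by writing down the initial value problem~\eqref{eq:mfg_flow} for the input $\mu\circ\gamma$ at index $x$,
\[
	\dot u_x(t)=\int_I W(x,y)\int_\T\sin(v-u_x(t))\ \d\mu(\gamma(y),t)(v)\ \d y,
\]
and then applying the change-of-variable formula for the measure-preserving map $\gamma$ to the $y$-integral, using $W^\gamma=W$ to replace $W(x,y)$ by $W(\gamma(x),\gamma(y))$ (valid for every $x$ and almost every $y$ by~\textbf{(A2)}-type reasoning, or simply by the hypothesis $W^\gamma=W$ interpreted pointwise in $x$). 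This turns the equation into the initial value problem~\eqref{eq:mfg_flow} for the input $\mu$ at index $\gamma(x)$, so uniqueness of solutions~\cite[Lemma 2.2]{kaliuzhnyi2018mean} gives the claimed identity of flows.

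With the flow identity in hand, the push-forward equation~\eqref{eq:mfg_push} for $\mu\circ\gamma$ reads
\[
	(\mu\circ\gamma)(x,t)=\mu(\gamma(x),t)=\Phi(\mu,\gamma(x),t,\c)\#\mu(\gamma(x),0)
		=\Phi(\mu\circ\gamma,x,t,\c)\#(\mu\circ\gamma)(x,0),
\]
which is exactly~\eqref{eq:mfg_push} for the function $\mu\circ\gamma$, and the initial condition~\eqref{eq:mfg_init} is satisfied with initial datum $\mu_0\circ\gamma$. Hence $\gamma^*$ sends solutions to solutions; applying the same argument to $\gamma^{-1}$ (which is also measure preserving and satisfies $W^{\gamma^{-1}}=W$, since $\gamma$ is invertible) shows $\gamma^*$ is a bijection of the solution set, i.e.\ a symmetry.

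I expect the main obstacle to be purely bookkeeping: making precise the ``holds for almost every $x$'' quantifiers when one composes with $\gamma$, since $\gamma$ can move a nullset off the exceptional set. The clean way around this is to note that $\gamma$ is measure preserving, so preimages of nullsets are nullsets; thus if~\eqref{eq:mfg} holds for all $t$ and a.e.\ $x$, it holds after precomposition with $\gamma$ for all $t$ and a.e.\ $x$ as well. No genuinely new ideas are needed beyond those in the proof of Theorem~\ref{thm:mp_gives_isometry}; the measure-valued state merely rides along passively because the symmetry acts only on the index space $\I$, not on $\T$ or on $\M$.
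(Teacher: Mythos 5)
Your proposal is correct and follows essentially the same route as the paper: check that $\mu\circ\gamma$ lies in $\MI$ via measurability of preimages, establish the flow identity $\Phi(\mu\circ\gamma,x,t,\c)=\Phi(\mu,\gamma(x),t,\c)$ by comparing the two ODEs and applying $W^\gamma=W$ together with the change-of-variable formula for $\gamma$, and then push this through the transport equation~\eqref{eq:mfg_push}. The only cosmetic difference is your closing remark about applying the argument to $\gamma^{-1}$; the paper leaves that implicit, and note that the statement as written allows $\gamma$ to be non-invertible, in which case $\gamma^*$ would be a ``non-invertible symmetry'' in the sense of Section~\ref{sec:InvSubsp} rather than a bijection of the solution set.
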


\begin{proof}
Let~$\gamma$ be as in the statement.
The elements of~$\MI$ are the maps~$I\to \M$ such that the preimages of open sets are
measurable~\cite{kaliuzhnyi2018mean}.
Fix~$\mu\in\MI$ and a measurable set~$\mathcal A \subseteq \M$.
Then~$\p{\mu\circ\gamma}^{-1} \mathcal A$ is equal to
the set~$\gamma^{-1} (\mu^{-1} \mathcal A)$.
This set is measurable since~$\mu^{-1} \mathcal A$ is a measurable set and~$\gamma:\I\to\I$
is a measurable function.
This proves that~$\mu\circ\gamma\in \MI$.

We now prove that~$\gamma$ is a symmetry of the dynamical system.
Let~$\mu(x,t)$ be a solution of~\eqref{eq:mfg} with initial value~$\mu_0(x)$.
We need to show that~$\mu(\gamma(x),t)$ is a solution
with initial value~$\mu_0(\gamma(x))$. The initial value condition is trivial.
It remains to prove that~\eqref{eq:mfg_push} is satisfied if~$\mu(x,t)$, $\mu$ and $\mu(x,0)$
are replaced by~$\mu(\gamma(x),t)$, $\mu\circ\gamma$ and $\mu(\gamma(x),0)$
respectively.
By definition
\[ \mu(x,t) = \Phi (\mu, x, t, \c) \# \mu(x,0) \]
holds for almost every $x$. Since $\gamma$ is measure-preserving then
\[ \mu(\gamma(x),t) = \Phi (\mu, \gamma(x), t, \c) \# \mu(\gamma(x),0) \]
holds for almost every $x$ as well. Since our goal is to prove that
\[ \mu(\gamma(x),t) = \Phi (\mu\circ \gamma, x, t, \c) \# \mu(\gamma(x),0), \]
it remains to be shown that
$\Phi (\mu, \gamma(x), t, \c)$ is equal to
$\Phi (\mu\circ \gamma, x, t, \c)$.
We compare the equations defining these flows.
Note that $\Phi (\mu\circ\gamma, x, t, \c)$ is given by
$$
\dot u(t) = \int_\I W(x,y) \int_{\T} D(v-u(t)) \ \d \mu(\gamma(y),t)(v) \ \d y
$$
while $\Phi (\mu, \gamma(x), t, \c)$ is given by
$$
\dot u(t) = \int_\I W(\gamma(x),y) \int_{\T} D(v-u(t))\ \d \mu(y,t)(v) \ \d y.
$$
The latter can be obtained from the former
by replacing $W(x,y)$ with $W(\gamma(x),\gamma(y))$ and
then~$\gamma(y)$ with~$y$. Since~$W^\gamma=W$
and~$\gamma$ is measure-preserving, then none
of these operations change the right hand side.
\end{proof}

We now return to constant coupling~$W=1$. Inspired by Section~\ref{sec:constant_graphon} we look at the cluster space
\[
	\{\mu\in \MI \mid x\mapsto \mu(x) \text{ is constant} \} \cong \M.
\]
This set is dynamically invariant and the mean-field graphon dynamics restricted to
the set is the same as the mean-field dynamics
(see $F$ in Figure~\ref{fig:mean_field_map}).
Indeed, since~$W=1$ and~$y\mapsto \mu(y)$
is constant we have
\[
	\int_{I} W(x,y) \int_{\T} \sin(v-u) \ \d \mu(y,t)(v) \ \d y
	= \int_{\T} \sin(v-u) \ \d \mu(y,t)(v).
\]
Mean-field graphon dynamics represents the evolution of infinitely many coupled
mean-field populations. When all the populations start with the same initial condition---that is, they are synchronized as probability measures---it reduces to the evolution of just one population
(see~$A$ in Figure~\ref{fig:mean_field_map}).

\begin{figure}
\centering
\includegraphics[width=\textwidth]{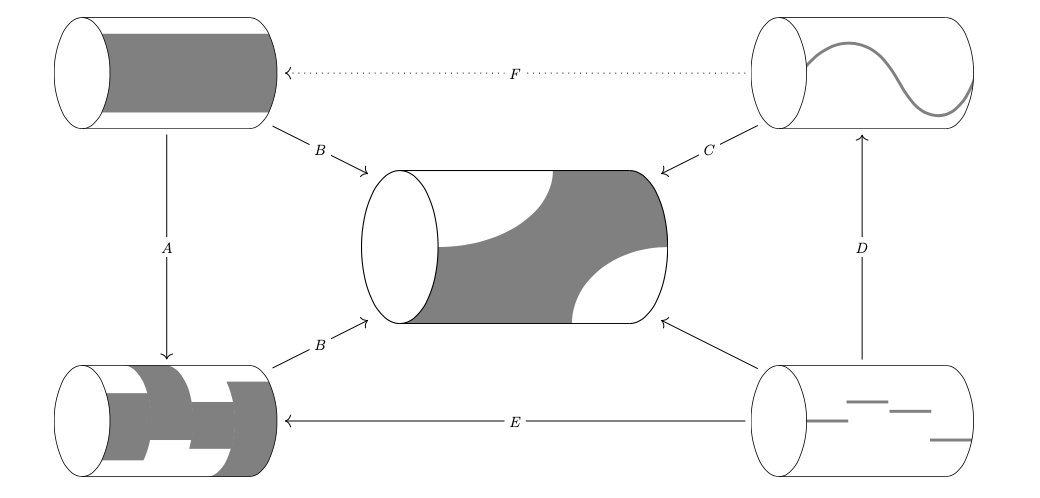}
\caption{
We sketch the inclusions of different network dynamical systems.
We use cylinders~$I\times \T$ to represent the state space of the dynamical units in different
limits. On the top-left, the mean-field limit,
the interval~$I$ can be identified to a point (one population)
whose state is described by one probability measure on~$\T$ (whose support is
represented in gray).
The bottom-left generalizes this mean-field to more than one population.
If the probability measures of the multi-population mean-field are Dirac,
we recover the finite system, bottom-right.
The top-right represents the graphon system.
The most general mean-field graphon system, in the center, contains all the others
as particular cases.
}
\label{fig:mean_field_map}
\end{figure}

Now consider a graphon with a block structure~$W_G = \sum_{j,k} G_{j,k} \1_{I_j\times I_k}$
as in Section~\ref{sec:block_structure}
and the cluster space
\[
	\bigcap_{k=1}^n \{\mu\in \MI \mid \mu: I_k\to \M \text{ is constant} \} \cong \M^n
\]
associated to the partition~$I=\bigcup_{k=1}^n I_k$.
Each block evolves as a single mean-field system.
Cluster dynamics represents the evolution of $n$~measures
coupled by the graph~$G$. This system is known as multi-population mean-field limit
and has been analyzed in~\cite{bick2022multi}.

We have seen how mean-field and multi-population mean field are
dynamically invariant subspaces of the mean-field graphon system
if the underlying graphon has certain symmetries.
Now we see how the graphon system can also be seen as a dynamically invariant subset
of the mean-field graphon system (see~$C$ in Figure~\ref{fig:mean_field_map}).
Intuitively, the graphon system is deterministic and the states can be interpreted 
as Dirac measures. If~$\mu(x) = \delta_{u(x)}$ is a Dirac measure for every~$x$
then the push-forward~\eqref{eq:mfg_push} is a Dirac measure for every~$x$ and~$t$
and the flow~$\Phi(\delta_{u}, x, t, \cdot)$ of~$x$ reduces to the graphon evolution equation:
\[
	\int_{I} W(x,y) \int_{\T} \sin(v-u_x) \ \d \mu(y,t)(v) \ \d y
	= \int_{I} W(x,y) \sin(u_y-u_x) \ \d y.
\]
Figure~\ref{fig:mean_field_map} summarizes the inclusions of invariant subspaces of mean-field graphon dynamical systems.


\section{Symmetries of Graphops and Graphop Dynamical Systems} 
\label{sec:graphop}

Finally, we consider the symmetries and their implications for dynamical systems on graphops; see for example~\cite{Kuehn2020b}. While graphons are natural limit objects for converging sequences of dense graphs, there are other notions of graph convergence such as Benjamini--Schramm convergence for sparse graph sequences~\cite{benjamini2011recurrence}. Graphops interpret graph sequences and their limits as operators and provide a unifying framework for different notions of convergence~\cite{backhausz2022action}. We now consider a generalization of the graph dynamical system~\eqref{eq:main:graph} to graphops---we refer to them as \emph{graphop dynamical systems}---and analyze their symmetry properties.

\subsection{Graphop dynamical systems}
Fix a probability space~$J=(\Omega, \mathcal A, \mu)$. Recall that a graphon dynamical system~\eqref{eq:main:graphon} evolves according to
\[
	\dot u_x = f\p{u_x, \int_{J} W(x,y) g(u_x, u_y) \d \mu (y)}.
\]
We can interpret~$\nu_x = W(x,\cdot) \d \mu$ as a measure that has a density~$W(x,\cdot)$ with respect to~$\mu$.
What now determines the dynamics is the joint effect of graphon and measure rather than their individual effect.
Intuitively, graphops generalize graphons by removing the hypothesis of having a density.

A \emph{graphop} is a self-adjoint, positive-preserving
bounded operator $L^\infty (J) \to L^1 (J)$. Every graphop can be represented by
a symmetric measure~$\nu$
on the product space~$(\Omega\times \Omega, \mathcal A \times \mathcal A)$~\cite[Theorem 6.3]{backhausz2022action}.
By using the disintegration theorem one obtains a family of measures~$\{\nu_x\}_{x\in \Omega}$,
called \emph{fiber measures}, that uniquely
represent the graphon~\cite[Remark 6.4]{backhausz2022action}.
We call \emph{graphop dynamical system} the dynamical system on~$L^1(J)$ induced by
\[
	\dot u_x = f\p{u_x, \int_{\Omega} g(u_x, u_y) \d \nu_x (y)}.
\]

We propose the following definition of graphop automorphism:

\davide{
\begin{definition} \label{def:graphop:automorphism}
Let~$(\nu_x)_{x\in \Omega}$ be a graphop on~$J=(\Omega, \mathcal A, \mu)$.
An \emph{automorphism} of the graphop
is a measurable bijection~$\varphi: (\Omega, \mathcal A) \to (\Omega, \mathcal A)$
such that~$\varphi^*: L^1(J) \to L^1(J)$
and satisfying~$\varphi \# \nu_x = \nu_{\varphi(x)}$ for every~$x$.
\end{definition}
}

The identity~$\varphi \# \nu_x = \nu_{\varphi(x)}$ is shorthand of two things:
First, the map~$\varphi: (\Omega, \nu_x) \to (\Omega, \nu_{\varphi(x)})$ is measurable and, second, it is measure preserving. Notice that, as oppose to graphon automorphisms,
we do not require~$\varphi: (\Omega, \mu) \to (\Omega, \mu)$ to be measure-preserving.
A consequence of this fact is that graphop symmetries
are symmetries of graphop dynamical systems,
but in general they are not isometries.

\begin{lemma} \label{lem:graphop_symmetry}
Let~$(\nu_x)_{x\in \Omega}$ be a graphop on~$J=(\Omega, \mathcal A, \mu)$.
Let~$\varphi$ be a graphop automorphism. Then
\[
	\varphi^*: L^1(J) \to L^1(J), \quad u \mapsto u\circ \varphi
\]
is a symmetry of the graphop dynamical system. We call~$\varphi^*$ a
\emph{graphop-induced} symmetry.
\end{lemma}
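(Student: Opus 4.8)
The plan is to mirror the proof of the Correspondence Theorem~\ref{thm:mp_gives_isometry}: first establish that $\varphi^*$ carries solutions of the graphop dynamical system to solutions by a single change-of-variables computation against the defining identity $\varphi\#\nu_x=\nu_{\varphi(x)}$, and then observe that $\varphi^*$ is a bijection of $L^1(J)$, so that it commutes with the flow and is therefore a symmetry in the sense of Section~\ref{sec:graph_dynamics}.

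The computation goes as follows. Let $t\mapsto u(t)$ be a solution, in the sense of Lemma~\ref{lem:existence_uniqueness}, whose well-posedness argument transfers verbatim to the graphop equation, and set $v(t)=\varphi^*u(t)$, i.e.\ $v_x(t)=u_{\varphi(x)}(t)$. Differentiating in $t$ and using that $u$ solves the evolution equation at the point $\varphi(x)$ gives $\dot v_x=f\p{u_{\varphi(x)},\ \int_J g(u_{\varphi(x)},u_y)\,\d\nu_{\varphi(x)}(y)}$ for almost every $x$. Substituting $\nu_{\varphi(x)}=\varphi\#\nu_x$ and applying the change-of-variables formula for push-forwards, exactly as in the proof of Theorem~\ref{thm:mp_gives_isometry}, rewrites the inner integral as $\int_J g(u_{\varphi(x)},u_{\varphi(y)})\,\d\nu_x(y)=\int_J g(v_x,v_y)\,\d\nu_x(y)$, so that $\dot v_x=f\p{v_x,\ \int_J g(v_x,v_y)\,\d\nu_x(y)}$; hence $v$ is again a solution. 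For invertibility, note that $\varphi^{-1}$ is again a graphop automorphism — the identity $\varphi^{-1}\#\nu_{\varphi(x)}=\nu_x$ is just the inverse of the defining relation, and $\varphi^{-1}$ is measurable since the relevant index spaces are standard — and that $(\varphi^{-1})^*$ is a two-sided inverse of $\varphi^*$. Thus $\varphi^*$ is a bijection of $L^1(J)$ sending trajectories to trajectories, which is exactly a symmetry; we call it the graphop-induced symmetry.

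The step that really needs care — and the only genuinely new point relative to the graphon case — is checking that $\varphi^*\colon u\mapsto u\circ\varphi$ is a well-defined, boundedly invertible map of $L^1(J)$ \emph{at all}: since $\varphi$ is not assumed to preserve $\mu$, the assignment $u\mapsto u\circ\varphi$ need neither descend to $\mu$-equivalence classes nor land in $L^1(J)$ in general. The plan is to resolve this from the graphop-automorphism condition together with the self-adjointness of the representing measure $\nu=\int_\Omega \delta_x\otimes\nu_x\,\d\mu(x)$ on $\Omega\times\Omega$: pushing $\nu$ forward under $\varphi\times\varphi$, using that the result is again symmetric, and comparing disintegrations against $\nu$ constrains how $\varphi\#\mu$ relates to $\mu$ — the outcome one needs is that $\varphi$ preserves the measure class of $\mu$ with Radon--Nikodym density bounded above and below. (Alternatively, one can run the whole argument on measurable functions rather than $L^1$-classes, since the right-hand side of the evolution equation only ever sees $u$ through the fiber measures $\nu_x$; but then one still owes an argument that $\varphi^*$ is a bijection of the phase space.) Once this is settled, everything else — differentiability of $t\mapsto v(t)$ and the change of variables above — is routine.
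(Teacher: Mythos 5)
Your change-of-variables computation is exactly the paper's proof and is equally short: substitute $\nu_{\varphi(x)}=\varphi\#\nu_x$, apply the change-of-variables formula for push-forwards to rewrite the inner integral, and conclude that $\varphi^*$ intertwines the vector fields; invertibility is argued by observing $\varphi^{-1}$ is again a graphop automorphism. So the heart of your argument matches the paper.

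You are also right that well-definedness of $\varphi^*$ on $L^1(J)$ is a genuine issue, and the paper's proof says nothing about it: Definition~\ref{def:graphop:automorphism} explicitly does \emph{not} require $\varphi$ to preserve $\mu$, so without further hypotheses $u\mapsto u\circ\varphi$ need not descend to $\mu$-equivalence classes, nor need it map $L^1(J)$ into itself (as $\int_\Omega\abs{u\circ\varphi}\,\d\mu=\int_\Omega\abs{u}\,\d(\varphi\#\mu)$ can be infinite). However, the fix you sketch does not close this gap. Pushing the representing measure forward gives $(\varphi\times\varphi)\#\nu=\int_\Omega\delta_y\otimes\nu_y\,\d(\varphi\#\mu)(y)$, and this is \emph{automatically} symmetric whenever $\nu$ is, because $\varphi\times\varphi$ acts diagonally on $\Omega\times\Omega$; so ``using that the result is again symmetric'' is vacuous and places no constraint at all on how $\varphi\#\mu$ relates to $\mu$. (The finite graphop example in Section~\ref{sec:spherical_graphop} already exhibits an automorphism with $\varphi\#\mu\neq\mu$; there $\varphi^*$ is well-defined only because the index space is finite.) To make the lemma rigorous one must either strengthen the definition of graphop automorphism --- for instance require $\varphi\#\mu$ and $\mu$ to be mutually absolutely continuous with Radon--Nikodym density bounded above and below, or require $(\varphi\times\varphi)\#\nu=\nu$, which is strictly stronger than the fiberwise condition used --- or else change the phase space; neither your proposal nor the paper supplies such an argument, so the gap you identified remains open.
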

\begin{proof}
Let~$u \in L^1(J)$.
By definition of graphop automorphism and the change of variable formula we have
\begin{align*}
		 & f\p{ u_{\varphi(x)}, \int_{\Omega} g(u_{\varphi(x)},u_y) \d \nu_{\varphi(x)}(y)} \\
		 & = f\p{ u_{\varphi(x)}, \int_{\Omega} g(u_{\varphi(x)},u_y) \d (\varphi \# \nu_x) (y)} \\
		 & = f\p{ u_{\varphi(x)}, \int_{\Omega} g(u_{\varphi(x)},u_{\varphi(y)}) \d \nu_{x}(y)}.
\end{align*}
This shows that~$\varphi^*$ maps solutions to solutions.
\end{proof}

Every graphon is a graphop. In the following lemma we show that graphon-induced symmetries
are graphop-induced symmetries, although we will see
that the converse is false in general.

\begin{lemma}
Let~$W$ be a graphon on~$J$ and~$\{\nu_x = W(x,\cdot) \d \mu\}_{x\in J}$ the associated graphop.
Every graphon automorphism is a graphop automorphism.
A graphop automorphism~$\varphi$ is a graphon automorphism if and only if
it preserves~$\mu$. In particular, the graphop-induced symmetries are exactly
the graphon-induced symmetries that are isometries of~$L^1(J)$.
\end{lemma}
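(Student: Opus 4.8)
The plan is to verify the three assertions in turn, reducing each to the change-of-variable formula for measure-preserving maps recalled in Section~\ref{sec:graphon_dynamics_symmetries}; throughout, $\nu_x = W(x,\cdot)\,\d\mu$ is regarded as a measure with density $W(x,\cdot)$ with respect to $\mu$. \emph{Every graphon automorphism is a graphop automorphism.} Let $\varphi$ satisfy (A1)--(A2). It is a measurable bijection, so it suffices to check $\varphi\#\nu_x = \nu_{\varphi(x)}$ for each fixed $x$. For a measurable set $A$ we have $(\varphi\#\nu_x)(A) = \int_\Omega \1_A(\varphi(y))\,W(x,y)\,\d\mu(y)$; by (A2) we may replace $W(x,y)$ by $W(\varphi(x),\varphi(y))$ for $\mu$-almost every $y$, and then, $\varphi$ being measure preserving, the change-of-variable formula applied to $h(z) = \1_A(z)\,W(\varphi(x),z)$ gives $\int_\Omega \1_A(\varphi(y))\,W(\varphi(x),\varphi(y))\,\d\mu(y) = \int_A W(\varphi(x),z)\,\d\mu(z) = \nu_{\varphi(x)}(A)$. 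Hence $\varphi\#\nu_x = \nu_{\varphi(x)}$.

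\emph{A graphop automorphism $\varphi$ is a graphon automorphism if and only if it preserves $\mu$.} If $\varphi$ is a graphon automorphism it preserves $\mu$ by (A1) and is a graphop automorphism by the previous step. Conversely, let $\varphi$ be a graphop automorphism with $\varphi\#\mu = \mu$; then (A1) holds, and it remains to establish (A2). Fix $x$. Writing out $\varphi\#\nu_x = \nu_{\varphi(x)}$ on a measurable set $A$ and transporting the right-hand side by the change-of-variable formula exactly as above (now with the roles of $W(x,\cdot)$ and $W(\varphi(x),\cdot)$ interchanged) yields $\int_{\varphi^{-1}(A)} W(x,y)\,\d\mu(y) = \int_{\varphi^{-1}(A)} W(\varphi(x),\varphi(y))\,\d\mu(y)$ for every measurable $A$. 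Since $\varphi$ is a bijection with measurable inverse, the sets $\varphi^{-1}(A)$ exhaust the $\sigma$-algebra, and as both integrands are honest measurable functions of $y$ this forces $W(\varphi(x),\varphi(y)) = W(x,y)$ for $\mu$-almost every $y$, i.e.\ (A2).

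\emph{The ``in particular''.} For any measurable map $\varphi$, the Koopman operator $\varphi^*\colon u\mapsto u\circ\varphi$ satisfies $\norm{\varphi^* u}_1 = \int_\Omega \abs{u}\,\d(\varphi\#\mu)$, so testing on indicators shows $\varphi^*$ is an isometry of $L^1(J)$ if and only if $\varphi\#\mu = \mu$. Combined with the equivalence just proved, $\varphi^*$ is a graphop-induced symmetry that is an $L^1$-isometry if and only if $\varphi$ is a graphon automorphism, i.e.\ if and only if $\varphi^*$ is a graphon-induced symmetry; this is the asserted identification. The one genuinely delicate step is the passage, in the converse half of the middle claim, from the integral identity over all sets $\varphi^{-1}(A)$ to pointwise-a.e.\ equality of the kernels: this uses that $\varphi$ is bi-measurable, which is automatic for the standard index spaces occurring in practice, and without it one recovers only $W(\varphi(x),\varphi(\cdot)) = E\!\left(W(x,\cdot)\mid \varphi^{-1}\mathcal A\right)$, the conditional expectation onto the pullback $\sigma$-algebra.
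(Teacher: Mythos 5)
Your proof is correct and follows essentially the same route as the paper: both reduce the claim to the change-of-variable formula for push-forwards (you compute with indicator functions on sets, the paper with general test functions and a dual reformulation $W(x,\varphi^{-1}(y))=W(\varphi(x),y)$, but these are cosmetic differences), and both derive the "in particular" from the equivalence between $\varphi\#\mu=\mu$ and $\varphi^*$ being an $L^1$-isometry. Your closing remark about bi-measurability of $\varphi$ identifies a genuine subtlety that the paper's proof also implicitly relies on (in writing $W(x,\varphi^{-1}(y))$ as a measurable integrand); it is a fair caveat, though automatic for the standard Borel spaces in play throughout the paper.
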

\begin{proof}
Let~$\varphi: J\to J$ be an invertible measurable map. Fix~$x\in J$.
Notice that
\[
	W(\varphi(x),\varphi(y))=W(x,y)
\]
holds for almost every~$y\in J$ if and only if
\[
	W(\varphi(x),y)=W(x,\varphi^{-1}(y))
\]
holds for almost every~$y \in J$. Moreover,
notice that~$\varphi$ is an automorphism of~$(\nu_x)_x$
if and only if for every~$x$ and every measurable function~$p$ we have
\[
	\int_J p(\varphi(y)) W(x,y) \d \mu(y) = \int_J p(y) W(\varphi(x),y) \d \mu(y)
\]
which is the same as
\begin{equation} \label{eq:compare}
	\int_J p(y) W(x,\varphi^{-1} (y)) \d (\varphi \# \mu) (y)
		= \int_J p(y) W(\varphi(x),y) \d \mu(y).
\end{equation}
Therefore, if~$\varphi$ is a graphon automorphism then~$W(x,\varphi^{-1} (y))=W(\varphi(x),y)$
for every~$x$ and almost every~$y$. This implies~$\varphi \# \mu = \mu$ and,
by definition, that~$\varphi$ is a graphop automorphism.

On the other hand, if a graphop automorphism~$\varphi$ preserves~$\mu$, then for every~$x$
and every measurable function~\davide{$p$}
\[
	\int_J p(y) W(x,\varphi^{-1} (y)) \d \mu (y)
		= \int_J p(y) W(\varphi(x),y) \d \mu(y).
\]
This implies~$W(x,y) = W^\varphi (x,y)$ for every~$x$ and almost every~$y$.
Therefore every measure-preserving graphop automorphism is a graphon
automorphism.

The last statement follows form the fact that a measurable map~$\varphi: J\to J$
is measure-preserving if and only if~$\varphi^* : L^1(J) \to L^1(J)$ is an isometry.
\end{proof}

\davide{In the context of graphon dynamical systems we have introduced the notion
of generalized graphon-induced symmetry (Definition~\ref{def:graphon_generalized_symmetry}).
These are not graphon automorphisms, as they are map between two graphons and are not
necessarily invertible. One might do the same for graphop dynamical systems.}

\subsection{Spherical graphop and other examples} 
\label{sec:spherical_graphop}

We illustrate the results with two concrete examples. The first example highlights that graphop-induced symmetries can be richer than graphon symmetries: While graphon symmetries only take the structure of the kernel into account, graphop-induced symmetries combine kernel and index space that both determine the network dynamics. In the second example, we compute the symmetry group of the spherical graphop, a generalization of the spherical graphon in Section~\ref{sec:spherical_graphon}.

\subsubsection*{Finite graphop}
Consider a graphon~$W$ with four vertices~$a,b,c,d$ with
probability measure~$\mu=(2/9,1/9,4/9,2/9)$ and two edges~$ab$ and~$cd$
with weights~$W(a,b)=1$ and~$W(c,d)=1/2$:
\begin{center}
\begin{tikzcd}
a \arrow[dash]{d}{1} & c \arrow[dash]{d}{1/2} \\
b & d
\end{tikzcd}
\qquad \quad
$
\begin{cases}
\mu(\{a\}) = 2/9, \\
\mu(\{b\}) = 1/9, \\
\mu(\{c\}) = 4/9, \\
\mu(\{d\}) = 2/9.
\end{cases}
$
\end{center}

The only non-identical measure preserving transformation
is the permutation interchanging $a$~with~$d$,
which does not preserve adjacency.
Therefore, the only graphon automorphism is the identity map.
Direct computation shows that the fiber measures of the associated graphop are
\[
	\nu_a = \p{0, \frac{1}{9}, 0, 0}, \,
	\nu_b = \p{\frac{2}{9}, 0, 0, 0}, \,
	\nu_c = \p{0, 0, 0, \frac{1}{9}}, \,
	\nu_d = \p{0, 0, \frac{2}{9}, 0}.
\]
Therefore, the permutation interchanging $a$~with~$c$ and $b$~with~$d$
is a graphop automorphism. Indeed, it is a symmetry of the associated dynamical system
\begin{align*}
\begin{cases}
\dot u_a = f\p{u_a, \frac{1}{9} g(u_a, u_b)} \\
\dot u_b = f\p{u_b, \frac{2}{9} g(u_b, u_a)} \\
\dot u_c = f\p{u_c, \frac{1}{9} g(u_c, u_d)} \\
\dot u_d = f\p{u_d, \frac{2}{9} g(u_d, u_c)}.
\end{cases}
\end{align*}

\subsubsection*{Spherical graphop}
The spherical graphop is a typical example of graph limit
of intermediate density, see~\cite[Figure 4]{backhausz2022action}
and~\cite[Example 5.4]{gkogkas2022graphop}.
The spherical graphop is a sparse version of the spherical graphon
we considered in Section~\ref{sec:spherical_graphon}.
As for the spherical graphon, we compute the automorphism group
and discuss the consequence for dynamics.

Let~$J$ be the unit sphere~$S^2 = \{x\in \R^3 \mid x^{\textsf{T}} x = 1\}$
endowed with uniform probability measure.
For every~$x\in S^2$ let~$\nu_x$ be the uniform measure
on the great circle~$\{y\in S^2 \mid y^{\textsf{T}} x = 0 \}$ orthogonal to~$x$.
The \emph{spherical graphop} is given by the collection of fiber measures~$(\nu_x)_{x\in S^2}$.

As one may expect, the spherical graphon and spherical graphop have the same automorphism group. But the proofs are quite different:
While the graphon case involved geometric measure theory, for the graphop case we will use projective geometry and \davide{} arguments in~\cite{Kohan}.
If automorphisms are assumed to be smooth (which \davide{is} a consequence of the following result),
an alternative proof using differential geometry can be given.

\begin{proposition} \label{prop:spherical_graphop}
The automorphism group of the spherical graphop is the isometry group of the sphere~$O(3)$.
\end{proposition}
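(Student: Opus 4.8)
The plan is to prove $\Aut \supseteq O(3)$ by exactly the argument behind Proposition~\ref{prop:iso_in_auto}: any $g\in O(3)$ maps the great circle $C_x := \{y\in S^2 : y^{\textsf T}x = 0\}$ isometrically onto $C_{gx}$, hence pushes the uniform measure $\nu_x$ on $C_x$ to the uniform measure $\nu_{gx}$ on $C_{gx}$, so $g\#\nu_x = \nu_{gx}$ and $g$ is a graphop automorphism. The content is the reverse inclusion, which I would approach through projective geometry.

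First I would record the geometric meaning of the fibers: $\nu_x$ is the uniform probability measure on the great circle $C_x$, so its support is $C_x$. For a graphop automorphism $\varphi$ the identity $\varphi\#\nu_x = \nu_{\varphi(x)}$ then says that $\varphi$ carries $C_x$ onto $C_{\varphi(x)}$, once the measure-theoretic point below is handled. Since $C_{-x} = C_x$, we get $\nu_{-x} = \nu_x$ and hence $C_{\varphi(-x)} = C_{\varphi(x)}$, which forces $\varphi(-x) = \pm\varphi(x)$; injectivity of $\varphi$ rules out the $+$ sign, so $\varphi(-x) = -\varphi(x)$ and $\varphi$ descends to a bijection $\bar\varphi$ of the real projective plane $\mathbb{RP}^2 = S^2/\{\pm 1\}$.

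Next comes the projective-geometry core. Under the standard polarity of $\mathbb{RP}^2$ attached to the form $x_1^2+x_2^2+x_3^2$, the great circle $C_x$ projects to the polar line $\ell_{[x]}$ of the point $[x]$, and as $x$ varies $\ell_{[x]}$ ranges over all lines of $\mathbb{RP}^2$. Hence $\bar\varphi$ maps lines to lines, i.e. it is a collineation, and since $\R$ admits no nontrivial field automorphism the fundamental theorem of projective geometry gives $\bar\varphi = [M]$ for some $M\in GL(3,\R)$. The compatibility $\bar\varphi(\ell_{[x]}) = \ell_{[Mx]}$ then reads $[M^{-\textsf T}x] = [Mx]$ for all $x$, so $M^{-\textsf T} = cM$ for a scalar $c$; thus $M^{\textsf T}M$ is a positive scalar matrix and $M$ is a scalar multiple of an orthogonal matrix, so after normalization $\varphi$ agrees with an element of $O(3)$, which is what we want. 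Equivalently, once one knows that $\varphi$ (which induces, together with its inverse---again a graphop automorphism---a bijection of $\mathbb{RP}^2$) preserves the orthogonality relation $x^{\textsf T}y=0$ in both directions, one may invoke Uhlhorn's theorem in the three-dimensional real case, which is the route suggested by~\cite{Kohan}.

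The step I expect to be the main obstacle is the very first reduction: a priori $\varphi\#\nu_x = \nu_{\varphi(x)}$ only says that $\varphi$ sends $\nu_x$-almost every point of $C_x$ into $C_{\varphi(x)}$, and since each $C_x$ is $\mu$-null the measures $\nu_x$ do not see $\mu$ at all. To turn this into the honest incidence implication ``$y^{\textsf T}x = 0 \Rightarrow \varphi(y)^{\textsf T}\varphi(x) = 0$'' needed above, I would combine the ``for every $x$'' in Definition~\ref{def:graphop:automorphism} with a Fubini argument over the symmetric measure $\nu = \int_{S^2}\nu_x\,\d\mu(x)$ representing the graphop, obtaining the implication off a $\nu$-negligible set, and then bootstrap using the fact that any great circle $C_z$ is the intersection of the pencil $\{C_y : y\in C_z\}$, so that $\varphi(z)$ is pinned down (up to sign) as the common intersection of the images of two such circles. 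Any residual ambiguity then lives on a set of measure zero and does not affect the induced element of $O(3)$. With this reduction in hand the remaining arguments are classical.
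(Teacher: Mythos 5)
Your proposal follows essentially the same route as the paper: descend to the projective plane, observe that the induced bijection is a collineation, and invoke the fundamental theorem of projective geometry (equivalently Uhlhorn's theorem via~\cite{Kohan}, which the paper also uses). The measure-theoretic subtlety you flag at the end is real, and the paper's proof in fact glosses over it---it passes without comment from ``$\varphi$ is a graphop automorphism'' to ``$\varphi$ maps orthogonal vectors to orthogonal vectors,'' whereas $\varphi\#\nu_x=\nu_{\varphi(x)}$ only controls $\varphi$ on a $\nu_x$-full subset of $C_x$.

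Two remarks on your attempted repair. First, the Fubini step as you sketch it needs $(\varphi\times\varphi)\#\nu=\nu$ for the symmetric measure $\nu=\int_{S^2}\nu_x\,\d\mu(x)$; that identity would follow from $\varphi\#\mu=\mu$, but Definition~\ref{def:graphop:automorphism} explicitly does \emph{not} require $\mu$-preservation, so this needs a different justification. Second, and more seriously, the gap cannot be fully closed as the proposition is literally stated. Observe that $\nu_x=\nu_{-x}$ for every $x$, so the antipodal pairs are twins of the graphop. Now take any $\varphi\in O(3)$, fix $x_0\in S^2$, put $q=\varphi(x_0)$, and define the bijection $\psi$ to agree with $\varphi$ except $\psi(x_0)=-q$ and $\psi(-x_0)=q$. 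For every $x$ the two-point set $\{x_0,-x_0\}$ is $\nu_x$-null, so $\psi\#\nu_x=\varphi\#\nu_x=\nu_{\varphi(x)}$; and $\nu_{\psi(x)}=\nu_{\varphi(x)}$ for every $x$ because $\nu_{-z}=\nu_z$. Hence $\psi$ satisfies Definition~\ref{def:graphop:automorphism}, yet $\psi$ is discontinuous and so not in $O(3)$. The ``for every $x$'' in the definition therefore pins down $\varphi(x)$ only up to sign (as you yourself observe), and no further; both the paper's proof and your proposal implicitly pass to the induced map of the projective plane and then tacitly choose the continuous lift, so the proposition should be read modulo such null modifications, or under an additional continuity assumption on $\varphi$.
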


\begin{proof}
Every orthogonal transformation is measurable and preserves
adjacency, thus it is a graphop automorphism.

Conversely, for an automorphism~$\varphi$ of the spherical graphop we have to show that $\varphi \in O(3)$.
The map~$\varphi$ is a bijective map~$S^2\to S^2$ which maps orthogonal vectors to orthogonal vectors. 
In particular,~$\varphi$ preserves antipodal pairs.
The quotient of~$S^2$ with respect to the antipodal pair relation is
canonically identified with the projective plane, the quotient of~$\R^3$
with respect to the linear dependence relation. Therefore~$\varphi$
induces a bijection of the projective plane with itself.

Since~$\varphi$ preserves orthogonality, then~$\varphi$ maps great circles to great circles.
The great circles in~$S^2$ corresponds to the projective lines in the projective plane.
Therefore~$\varphi$ induces a bijection of the projective plane preserving
collinearity. The Fundamental Theorem of Projective Geometry states that any such map
is given by an invertible linear transformation~$T$ of~$\R^3$.
The transformation~$T$ is uniquely determined up to scalar multiplication.

Any linear transformation preserving orthogonality is a scalar multiple of
an orthogonal transformation.
Therefore~$\varphi = \lambda T$ for some~$\lambda \neq 0$ and some orthogonal transformation~$T$.
Since~$\varphi$ maps~$S^2$ to itself then~$\abs{\lambda}=1$.
We conclude that~$\varphi \in O(3)$.
\end{proof}

Dynamics on the spherical graphop, considered
in~\cite[Example 5.4]{gkogkas2022graphop}, has a group of symmetries~$O(3)$.
For example, the space of functions invariant under rotation along a fixed axis
is dynamically invariant.
Reduced dynamics can be represented as a graphop system parametrized the invariant diameter.


\renewcommand\refname{References}
\bibliographystyle{siam}
\bibliography{refs}

\end{document}